\DeclareFontFamily{U}{wncyr}{}
\DeclareFontShape{U}{wncyr}{m}{n}{<->wncyr10}{}
\DeclareFontShape{U}{wncyr}{m}{it}{<->wncyi10}{}
\DeclareFontShape{U}{wncyr}{m}{sc}{<->wncysc10}{}
\DeclareFontShape{U}{wncyr}{b}{n}{<->wncyb10}{}
\DeclareTextCommand{\guillemotleft}{T1}{%
  {\fontencoding{U}\fontfamily{wncyr}\selectfont\symbol{"3C}}%
}
\DeclareTextCommand{\guillemotright}{T1}{%
  {\fontencoding{U}\fontfamily{wncyr}\selectfont\symbol{"3E}}%
}
\definecolor{amaranth}{rgb}{0.9, 0.17, 0.31}
\definecolor{blue(ryb)}{rgb}{0.01, 0.28, 1.0}
\definecolor{darkred}{rgb}{0.55, 0.0, 0.0}
\newtheorem{theorem}{Theorem}[section]
\newtheorem{proposition}[theorem]{Proposition}
\newtheorem{lemma}[theorem]{Lemma}
\theoremstyle{definition}
\newtheorem{definition}[theorem]{Definition}
\theoremstyle{remark}
\newtheorem{remark}[theorem]{Remark}
\numberwithin{equation}{section}
\DeclareRobustCommand{\SkipTocEntry}[5]{}
\begin{document}

\title[Invariant Measures and Growth for the 2D Euler Equations]{Construction of High Regularity Invariant Measures for the 2D Euler Equations and Remarks on the Growth of the Solutions}

\author{Mickaël Latocca}

\address{Département de Mathématiques et Applications, Ecole Normale Supérieure -- PSL Research University, 45 rue d'Ulm 75005 Paris, France}

\email{mickael.latocca@ens.fr}
\subjclass[2010]{Primary 35L05, 35L15, 35L71}

\date{\today}

\maketitle

\begin{abstract}
We consider the Euler equations on the two-dimensional torus and construct invariant measures for the dynamics of these equations, concentrated on sufficiently regular Sobolev spaces so that strong solutions are also known to exist. The proof follows the method of Kuksin in~\cite{Kuksin2004} and we obtain in particular that these measures do not have atoms, excluding trivial invariant measures. Then we prove that almost every initial data with respect to the constructed measures give rise to global solutions for which the growth of the Sobolev norms are at most polynomial. To do this, we rely on an argument of Bourgain. Such a combination of Kuksin's and Bourgain's arguments already appear in the work of Sy~\cite{sy}. We point out that up to the knowledge of the author, the only general upper-bound for the growth of the Sobolev norm to the $2d$ Euler equations is double exponential. 
\end{abstract}

\section{Introduction}

\subsection{The Euler equations} This article is concerned with the incompressible Euler equations posed on the torus $\mathbb{T}^2$ of dimension $2$:
\begin{equation}
    \label{4Euler2}
    \tag{E${}^2$}
    \left\{
    \begin{array}{ccc}
        \partial_tu + u\cdot \nabla u +\nabla p & =& 0 \\
        \nabla \cdot u &=&0 \\
        u(0)&=& u_0 \in H^s(\mathbb{T}^d,\mathbb{R}^d)\,, 
    \end{array}
    \right.
\end{equation}
where $s>0$, $H^s$ stands for the usual Sobolev space and the unknowns are the velocity field $u(t) : \mathbb{T}^2 \to \mathbb{R}^2$ and the pressure $p : \mathbb{T}^2 \to \mathbb{R}$.

We recall that the pressure $p$ can be recovered from $u$ by solving the elliptic problem 
\[
    -\Delta p = \nabla \cdot \left(u \cdot \nabla u\right)\text{ on } \mathbb{T}^2\,.
\]

In dimension $2$, the \textit{vorticity} defined as $\xi \coloneqq \nabla \wedge u$ is a more convenient variable. Taking the rotational in~\eqref{4Euler2}, the latter can be recast as:
\begin{equation}
    \label{4EulerVorticity}
    \left\{
    \begin{array}{ccc}
         \partial _t \xi + u \cdot \nabla \xi &=& 0  \\
         \xi(0) &=& \nabla \wedge u_0\,,
    \end{array}
    \right.
\end{equation}
where $u$ is recovered from $\xi$ \textit{via} the Biot-Savart law: 
\begin{equation}
    \label{4BiotSavart}
    \mathcal{K}(\xi)(t,x)= \frac{1}{2\pi}\int_{\mathbb{T}^2}\frac{\xi(t,y)(x-y)^{\perp}}{|x-y|^2}\,\mathrm{d}y\,,
\end{equation}
where we recall that for any $y=(y_1,y_2)\in\mathbb{T}^2$ we define $y^{\perp}=(-y_2,y_1)$. 

The Cauchy problem for~\eqref{4Euler2} is well-undestood:

\begin{theorem}[Wolibner, \cite{wolibner}]\label{4mainGlobalWolib} Let $s>2$. The Cauchy problem for~\eqref{4Euler2} is globally well-posed in $\mathcal{C}^0(\mathbb{R}_+,H^s(\mathbb{T}^2,\mathbb{R}^2))$.
\end{theorem}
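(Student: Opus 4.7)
The plan is to follow the classical strategy: prove local well-posedness in $H^s$ and then upgrade it to a global result via a blow-up criterion and the conservation of $\|\xi\|_{L^\infty}$ that is special to two dimensions.

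First, I would establish local well-posedness in $H^s(\mathbb{T}^2,\mathbb{R}^2)$ for $s>2$ by the standard scheme: mollify the equation into an ODE in a Banach space (for instance by convolving the nonlinearity with a Friedrichs mollifier or by a Galerkin truncation), derive a uniform-in-the-regularisation energy estimate of the form
\[
\tfrac{1}{2}\tfrac{\mathrm{d}}{\mathrm{d}t}\|u\|_{H^s}^2 \les \|\nabla u\|_{L^\infty}\,\|u\|_{H^s}^2,
\]
which relies on the Kato--Ponce commutator estimate applied to $[\langle D\rangle^s, u\cdot\nabla]$ together with the divergence-free condition to kill the pressure term, and finally pass to the limit. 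The Sobolev embedding $H^{s-1}\hookrightarrow L^\infty$ (available since $s>2$) closes the estimate and gives a local existence time $T^\ast \ges \|u_0\|_{H^s}^{-1}$ together with uniqueness in this class.

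Next I would prove that the solution cannot blow up in finite time. The key point is that differentiating the local solution and using $\nabla\cdot u=0$ yields the vorticity transport equation~\eqref{4EulerVorticity}, from which $\|\xi(t)\|_{L^p}=\|\xi_0\|_{L^p}$ for every $p\in[1,\infty]$; in particular $\|\xi(t)\|_{L^\infty}$ stays constant. I would then replace the rough bound $\|\nabla u\|_{L^\infty}\les \|u\|_{H^s}$ by the logarithmic interpolation inequality of Brezis--Gallouet--Wainger type,
\[
\|\nabla u\|_{L^\infty} \les \|\xi\|_{L^\infty}\Bigl(1+\log\bigl(\mathrm{e}+\|u\|_{H^s}/\|\xi\|_{L^\infty}\bigr)\Bigr),
\]
which one proves by splitting $u$ into low, medium and high Littlewood--Paley frequencies and using that $\nabla u$ and $\xi$ are related by a zero-order Calderón--Zygmund operator (equivalently, via the Biot--Savart kernel~\eqref{4BiotSavart}). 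Plugging this into the $H^s$ energy estimate gives
\[
\tfrac{\mathrm{d}}{\mathrm{d}t}\|u\|_{H^s}^2 \les \|\xi_0\|_{L^\infty}\Bigl(1+\log\bigl(\mathrm{e}+\|u\|_{H^s}^2\bigr)\Bigr)\|u\|_{H^s}^2.
\]
A Gronwall argument on the differential inequality $y'\les y\log(\mathrm{e}+y)$ produces at most a double-exponential bound
\[
\|u(t)\|_{H^s} \le \|u_0\|_{H^s}\exp\bigl(C\exp(Ct\,\|\xi_0\|_{L^\infty})\bigr),
\]
which is finite for every $t\ge 0$. Hence the local solution cannot leave $H^s$ in finite time and the standard continuation argument yields a unique solution in $\mathcal{C}^0(\mathbb{R}_+,H^s(\mathbb{T}^2,\mathbb{R}^2))$.

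The step I expect to be the most delicate is the commutator/logarithmic bookkeeping: one has to make sure that the constants in the Kato--Ponce estimate and in the Brezis--Gallouet interpolation depend only on $s$, so that the conservation of $\|\xi\|_{L^\infty}$ really closes the Gronwall loop. Continuous dependence on the initial data in the strong topology of $H^s$ is then obtained either by revisiting the energy estimate on the difference of two solutions (loss of one derivative controlled by the same logarithmic factor) or by a Bona--Smith-type regularisation argument, which is routine once the above a priori bound is in place. Importantly for the sequel of the paper, the double-exponential bound on $\|u(t)\|_{H^s}$ is the only general upper bound currently available, which motivates the polynomial-in-time improvement obtained almost surely with respect to the invariant measures constructed later.
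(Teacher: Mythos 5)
Your proposal is correct, and the local part coincides with the paper's argument: the paper's proof (given for the essentially identical Lemma~\ref{4lemmaLocalEuler}) rests on the same a priori estimate $\frac{\mathrm{d}}{\mathrm{d}t}\|u\|_{H^s}^2\lesssim\|\nabla u\|_{L^\infty}\|u\|_{H^s}^2\lesssim\|u\|_{H^s}^3$, using $H^{s-1}\hookrightarrow L^\infty$, to get a local solution on a time $\tau\sim\|u_0\|_{H^s}^{-1}$. Where you diverge is the globalisation: the paper invokes the Beale--Kato--Majda continuation criterion as a black box (citing~\cite{bkm}), namely that $T^*<\infty$ forces $\int_0^{T^*}\|\xi(t)\|_{L^\infty}\,\mathrm{d}t=\infty$, and concludes immediately from the conservation of $\|\xi\|_{L^\infty}$ under the transport equation~\eqref{4EulerVorticity}; you instead unpack that criterion, proving the Brezis--Gallouet--Wainger logarithmic bound $\|\nabla u\|_{L^\infty}\lesssim\|\xi\|_{L^\infty}\bigl(1+\log(\mathrm{e}+\|u\|_{H^s}/\|\xi\|_{L^\infty})\bigr)$ and running Gr\"onwall on $y'\lesssim y\log(\mathrm{e}+y)$. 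Both routes hinge on the same two-dimensional miracle (conservation of $\|\xi\|_{L^\infty}$); the paper's is shorter, while yours is self-contained and has the added benefit of explicitly producing the double-exponential bound $\|u(t)\|_{H^s}\le C\exp(Ce^{Ct})$, which is precisely the general upper bound the paper quotes in its introduction and which motivates the almost-sure polynomial improvement of Theorem~\ref{4sideTheo2d}. The only point to watch in your version is that the low-frequency term usually present in the logarithmic inequality must be absorbed; on the torus with zero mean this is harmless since $\|\xi\|_{L^2}\lesssim\|\xi\|_{L^\infty}$.
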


\subsection{Main results}

\subsubsection{Invariant measures for~\eqref{4Euler2}}

In this article we first construct invariant measure for~\eqref{4Euler2} supported at high regularity. 

\begin{theorem}\label{4mainTheo2d3d} Let $s>2$. There exists a measure $\mu_s$ concentrated on $H^s(\mathbb{T}^2,\mathbb{R}^2)$ such that: 
\begin{enumerate}[label=(\textit{\roman*})]
    \item The equation~\eqref{4Euler2} is $\mu_s$-almost-surely globally well-posed in time. 
    \item $\mu_s$ is an invariant measure for~\eqref{4Euler2}.  
    \item $\mu_s$ does not have any atom and satisfies $\mathbb{E}_{\mu_s}[\|u\|_{H^{s}}^2] = C(s) \in (0,\infty)$.   
    \item $\mu_s$ charges large norm data; that is, for any $R>0$, 
    \[
        \mu_s(u \in H^{s}, \; \|u\|_{H^{s}}>R) >0\,.
    \] 
\end{enumerate}
\end{theorem}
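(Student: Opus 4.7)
The plan is to implement the Kuksin inviscid-limit procedure~\cite{Kuksin2004}. Consider the stochastically forced Navier--Stokes equation on $\mathbb{T}^2$,
\begin{equation*}
du + (u\cdot\nabla u + \nabla p)\,dt = \nu \Delta u\,dt + \sqrt{\nu}\, d\eta_t, \qquad \nabla\cdot u = 0,
\end{equation*}
where $\eta$ is a cylindrical Wiener process whose spatial covariance $B$ is divergence-free and sufficiently smooth so that sample paths lie in $H^{s+2}$ almost surely. For every fixed $\nu\in (0,1]$, a standard Galerkin plus Krylov--Bogolyubov argument produces a stationary measure $\mu_\nu$ for the stochastic flow.

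The central technical ingredient is a family of uniform-in-$\nu$ moment bounds
\begin{equation*}
\sup_{\nu \in (0,1]} \mathbb{E}_{\mu_\nu}\bigl[\|u\|_{H^s}^{2k}\bigr] \le C_{s,k}, \qquad k \ge 1,
\end{equation*}
proved by applying It\^o's formula to the functional $\|\xi\|_{H^{s-1}}^{2k}$ in the vorticity formulation~\eqref{4EulerVorticity}, and combining the fluctuation--dissipation balance with the fact that the inviscid nonlinearity preserves $L^p$-norms of the vorticity in two dimensions. Compact Sobolev embedding then makes the family $(\mu_\nu)$ tight on $H^s$, so a subsequence $\mu_{\nu_n}$ converges weakly to a probability measure $\mu$ supported on $H^s$.

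The Euler-invariance of $\mu$ is obtained by passing to the limit in the stationarity identity $\int F \circ \Phi_t^{\nu}\, d\mu_\nu = \int F\, d\mu_\nu$ for bounded Lipschitz functionals $F$ on $H^s$: a Skorokhod representation together with the deterministic stability estimate $\Phi_t^\nu u \to \Phi_t u$ in $H^s$ (valid for $u \in H^\sigma$ with $\sigma > s$) and the uniform moment bounds give the limiting identity $\int F\circ \Phi_t\, d\mu = \int F\, d\mu$; Theorem~\ref{4mainGlobalWolib} guarantees that $\Phi_t$ is globally defined on $H^s$, which also delivers item (i). For item (iii), the fluctuation--dissipation identity $2\nu\, \mathbb{E}_{\mu_\nu}[\|\nabla u\|_{L^2}^2] = \nu\, \mathrm{tr}(B)$ has a $\nu$-independent right-hand side, so by weak lower semicontinuity $\mathbb{E}_\mu[\|u\|_{\dot H^1}^2] \ge \tfrac12\mathrm{tr}(B) > 0$, precluding $\mu = \delta_0$; non-atomicity is then obtained from the smoothness of $\mu_\nu$ at each fixed $\nu > 0$ together with a careful passage to the inviscid limit along the lines of~\cite{Kuksin2004}. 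Finally, item (iv) is ensured by running the construction with forcings of increasing amplitude $R$, producing a sequence $\mu_R$ of invariant measures concentrated on larger and larger balls, whose weighted convex combination $\sum_{R\in\mathbb{N}} 2^{-R}\mu_R$ remains Euler-invariant and charges $\{\|u\|_{H^s} > R_0\}$ for every $R_0$.

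The main technical obstacle is the uniform-in-$\nu$ high-regularity moment bound in the second step: it relies decisively on the two-dimensional structure, namely the transport equation~\eqref{4EulerVorticity} for the scalar vorticity and the resulting $L^p$ estimates. It is precisely the failure of these estimates in three dimensions that forces the weaker alternative statement there. A secondary delicate point is the non-atomicity assertion in (iii), since atoms can in principle be created when passing to weak limits; ruling this out requires exploiting specific structural features of the stationary measures $\mu_\nu$ that are preserved in the limit.
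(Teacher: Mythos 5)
Your overall architecture (fluctuation--dissipation approximation, Krylov--Bogolyubov, tightness, inviscid limit, convex combination over forcing amplitudes for item (\textit{iv})) matches the paper's. However, there is a genuine gap at the decisive step, namely the uniform-in-$\nu$ bound at regularity $H^s$ with $s>2$. You regularize with the ordinary viscosity $\nu\Delta u$ and claim the bound $\sup_\nu \mathbb{E}_{\mu_\nu}[\|u\|_{H^s}^{2k}]\le C_{s,k}$ by applying It\^o's formula to $\|\xi\|_{H^{s-1}}^{2k}$ and invoking conservation of $L^p$ norms of the vorticity. This does not work: the cancellation $\langle u\cdot\nabla\xi,\xi\rangle=0$ holds only in $L^2$ of the vorticity (equivalently $\dot H^1$ of the velocity), and for $s-1>0$ the transport term $\langle u\cdot\nabla\xi,\xi\rangle_{H^{s-1}}$ does not vanish and cannot be absorbed by the dissipation $\nu\|\xi\|_{H^{s}}^2$ uniformly as $\nu\to 0$ (indeed $\|\xi\|_{H^{s-1}}$ is exactly the quantity whose only known deterministic bound is double exponential). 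This is why Kuksin's original construction stops at $H^2$. The paper's essential device, which your proposal omits, is to replace $\nu\Delta$ by the hyperviscous operator $\nu(-\Delta)^{1+\delta}$ with $s=2+\delta$: then the balance law obtained by applying It\^o's formula to $\|u\|_{\dot H^1}^2$ --- the level at which the two-dimensional cancellation $\langle B(u,u),u\rangle_{H^1}=0$ genuinely holds --- already produces $2\nu\,\mathbb{E}_{\mu_\nu}[\|u\|_{\dot H^{2+\delta}}^2]=\nu\mathcal{B}_1$, i.e.\ a $\nu$-independent bound directly at regularity $H^s$. Without this (or some substitute), your construction only yields a measure on $H^2$.

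Two secondary points. First, your derivation of $\mathbb{E}_\mu[\|u\|_{\dot H^1}^2]>0$ from ``weak lower semicontinuity'' has the inequality going the wrong way: for a weakly convergent sequence of measures and a nonnegative lower semicontinuous functional $F$, Portmanteau gives $\mathbb{E}_\mu[F]\le\liminf_\nu\mathbb{E}_{\mu_\nu}[F]$, an \emph{upper} bound on the limit. To prevent the Dirichlet energy from escaping in the limit one needs a uniform integrability statement such as $\limsup_j\int_{\|u\|_{\dot H^1}>R}\|u\|_{\dot H^1}^2\,d\mu_{\nu_j}\le C/R$, which the paper extracts from a uniform exponential moment bound $\mathbb{E}_{\mu_\nu}[e^{\gamma\|u\|_{\dot H^1}^2}]\le C$; the strict positivity of the prelimit quantity itself comes from an interpolation between the $\dot H^{1+\delta}$ and $\dot H^{2+\delta}$ balance laws, not from the $\dot H^1$ identity alone. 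Second, non-atomicity is not a soft consequence of smoothness of $\mu_\nu$ passing to the limit (atoms can indeed appear under weak convergence); the paper proves a quantitative, $\nu$-uniform estimate $\mu_\nu(\|u\|_{L^2}\in\Gamma)\le p(|\Gamma|)$ via local times (It\^o--Tanaka) for the process $t\mapsto g(\|u(t)\|_{L^2}^2)$, and this uniform modulus is what survives the limit. You flag this as delicate but do not supply the mechanism.
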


\begin{remark} The measure $\mu_s$ depends on $s$ because it is constructed by compactness methods based on invariant measures for~\eqref{4NSvv2}, which contains a regularisation by an hyper-viscous term $(-\Delta)^{s-1}$, thus depending on $s$.   
\end{remark}

The measure constructed by Theorem~\ref{4mainTheo2d3d} satisfies the following properties.  

\begin{theorem}[Properties of the measure]\label{4mainCoro2d}
Let $s>2$ and $\mu_s$ the measure obtained in Theorem~\ref{4mainTheo2d3d}. There exists a continuous increasing function $p : \mathbb{R}_+ \to \mathbb{R}_+$ such that $p(0)=0$ and for every Borel subset $\Gamma \subset \mathbb{R}_+$ there holds:
    \[
        \mu_s \left(u \in H^{s}, \|u\|_{L^2} \in \Gamma\right) \leqslant p(|\Gamma|)\,.
    \]
\end{theorem}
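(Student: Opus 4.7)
My plan is to prove the estimate first at the level of the approximating measures $(\mu_\nu)_{\nu > 0}$ used in the construction of $\mu$, and then pass to the weak limit $\nu\to 0$. Recall that $\mu_\nu$ is the stationary measure of the fluctuation--dissipation regularised system
\[
\partial_t u - \nu\Delta u + u\cdot\nabla u + \nabla p \;=\; \sqrt{\nu}\,\eta, \qquad \nabla\cdot u = 0,
\]
driven by a non-degenerate Gaussian noise $\eta$, white in time, with covariance $Q = \sum_k a_k^2\,e_k\otimes e_k$. The passage to the limit is the easy part: by outer regularity of Lebesgue measure one may restrict to open $\Gamma\subset \mathbb{R}_+$, and then $A_\Gamma := \{u : \|u\|_{L^2}\in\Gamma\}$ is open in $H^s$ (hence in any weaker Sobolev topology in which $\mu_\nu \rightharpoonup \mu$), so the Portmanteau theorem yields $\mu(A_\Gamma) \leq \liminf_{\nu\to 0}\mu_\nu(A_\Gamma)$.

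The heart of the matter is thus the uniform-in-$\nu$ bound $\mu_\nu(A_\Gamma) \leq p(|\Gamma|)$. The natural tool is It\^o's formula applied to $F(\|u\|_{L^2}^2)$ along a stationary solution, for a well-chosen smooth $F : \mathbb{R}_+ \to \mathbb{R}$. The nonlinearity contribution vanishes thanks to $\int_{\mathbb{T}^2}(u\cdot\nabla u)\cdot u = 0$, and stationarity reduces the expectation identity to
\[
\mathbb{E}_{\mu_\nu}\!\left[F'(\|u\|_{L^2}^2)\bigl(-2\nu\|u\|_{\dot H^1}^2 + \nu\,\mathrm{Tr}\,Q\bigr) + 2\nu F''(\|u\|_{L^2}^2)\sum_k a_k^2|\langle u, e_k\rangle|^2\right] = 0.
\]
The simple test $F(x) = x$ already yields the uniform enstrophy bound $\mathbb{E}_{\mu_\nu}[\|u\|_{\dot H^1}^2] = \tfrac{1}{2}\mathrm{Tr}\,Q$. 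More ambitiously, one applies the identity to a smooth $F$ with $F''(x) = \varphi_\Gamma(x)$, a smoothed indicator of $\Gamma$ with $|F|, |F'| \lesssim |\Gamma|$; after rearranging one extracts a bound of the shape
\[
\mathbb{E}_{\mu_\nu}\!\left[\varphi_\Gamma(\|u\|_{L^2}^2)\sum_k a_k^2|\langle u, e_k\rangle|^2\right] \;\lesssim\; |\Gamma|,
\]
uniformly in $\nu$, the factor $\nu$ cancelling from both sides.

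The main obstacle is then to convert this weighted bound into the unweighted estimate $\mu_\nu(A_\Gamma)\leq p(|\Gamma|)$. This requires that the diffusion coefficient $\sum_k a_k^2|\langle u, e_k\rangle|^2$ be bounded from below in a usable way on a set of $\mu_\nu$-mass close to $1$, uniformly in $\nu$. Two ingredients already present in the Kuksin construction make this possible: the uniform tightness of $\mu_\nu$ on sets $\{\|u\|_{H^s}\leq M\}$ (with $M = M(\varepsilon)$), and the full non-degeneracy of the noise on sufficiently many Fourier modes. Combining the two via an $H^s$--$H^{-\sigma}$ interpolation on the tight set gives a lower bound on the noise quadratic form in terms of $\|u\|_{L^2}$, which translates the weighted estimate above into the desired form $p(t) = Ct^{\alpha}$ for some $\alpha \in (0,1]$. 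This $p$ is continuous, increasing, and vanishes at $0$, as required; in particular $p(0)=0$ is precisely the absolute continuity of the pushforward $(\|\cdot\|_{L^2})_*\mu$ with respect to Lebesgue measure, strengthening the non-atomicity already proved in Theorem~\ref{4mainTheo2d3d}.
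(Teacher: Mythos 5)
Your overall scheme -- a stationary It\^o identity for $F(\|u\|_{L^2}^2)$ giving the weighted bound $\mathbb{E}_{\mu_\nu}\bigl[\varphi_\Gamma(\|u\|_{L^2}^2)\sum_k a_k^2|u_k|^2\bigr]\lesssim|\Gamma|$, followed by a lower bound on the diffusion coefficient on a set of large uniform measure, and a Portmanteau passage to the limit -- is the same as the paper's (Proposition~\ref{4propGoodMeasures}(\textit{ii}), via Proposition~\ref{4propFormula} with $g(x)=x$; the paper uses local times rather than a smoothed indicator, but this is cosmetic). However, there is a genuine gap in the conversion step. Your lower bound on $\sum_k a_k^2|u_k|^2$ obtained from tightness and non-degeneracy necessarily degenerates as $\|u\|_{L^2}\to 0$: on $\{\|u\|_{H^s}\le M\}$ one only gets $\sum_k a_k^2|u_k|^2\ge \tilde a_N^2\bigl(\|u\|_{L^2}^2-N^{-2s}M^2\bigr)$, which is useful only where $\|u\|_{L^2}\ge\varepsilon$ (with $N=N(\varepsilon,M)$ and $\tilde a_{N}\to 0$). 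So besides the tail set $\{\|u\|_{H^s}>M\}$ you must also excise $\{\|u\|_{L^2}<\varepsilon\}$, and you need a bound $\mu_\nu(\|u\|_{L^2}<\varepsilon)\le q(\varepsilon)$ with $q(\varepsilon)\to 0$ uniformly in $\nu$. Your proposal never addresses this region, and it cannot be obtained from the moment bounds or the weighted estimate alone: a priori the measures $\mu_\nu$ could concentrate mass near $u=0$, in which case no function $p$ with $p(0)=0$ works for sets $\Gamma$ containing or accumulating at $0$.

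This missing piece is precisely part (\textit{i}) of Proposition~\ref{4propGoodMeasures}, which is the technically hardest part of the two-dimensional statement: the paper proves $\mu_\nu(0<\|u\|_{L^2}\le\varepsilon)\lesssim\varepsilon^{2(1+\delta)/(2+\delta)}$ by applying the local-time identity to $g(x)=x^{(1+\delta)/(2+\delta)}$ (exploiting the singularity of $g'$ at the origin), and separately excludes an atom at $0$ by a one-dimensional local-time argument applied to the individual Fourier coefficients $u_j(t)=(u(t),e_j)_{L^2}$. That this step is not free is confirmed by the three-dimensional analogue, Theorem~\ref{4mainCoro3d}, where the argument you describe is carried out verbatim but the conclusion must be restricted to $\operatorname{dist}(\Gamma,0)\ge\varepsilon$ exactly because the small-$L^2$-ball estimate is unavailable there. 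Your proof as written would only establish the weaker, three-dimensional-type statement.
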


\subsubsection{Remarks on the growth of the Sobolev norms for solutions to~\eqref{4Euler2}}

This work was originally motivated by the growth of the Sobolev norms and the $L^{\infty}$ norm of the vorticity gradient in dimension $2$. Let us recall some known results. Up to the knowledge of the author, the only known technique to establish such bounds is to estimate $\|\mathcal{K}\|_{L^p\to L^p}$, with $\mathcal{K}$ being defined by~\eqref{4BiotSavart}. For example, in order to estimate $\|\xi\|_{H^s}$, one simply applies $\nabla ^s $ to ~\eqref{4EulerVorticity} and obtain bounds of the form:
\[
    \frac{\mathrm{d}}{\mathrm{d}t} \|\nabla ^s \xi (t)\|_{L^2} \lesssim \|K\|_{L^{p}\to L^p} \|\nabla ^s\xi(t)\|_{L^2}^{1+\frac{2}{p}}\,,
\] 
which after optimisation in $p$ leads to the following estimates. 

\begin{theorem}[Beale-Kato-Majda criterion~\cite{bkm}] Let $u$ be a smooth, global solution to~\eqref{4Euler2}. There exists a constant $C=C(u_0)>0$ such that for any $t >0$,
\[ 
    \|u(t)\|_{H^s}, \|\nabla \xi (t)\|_{L^{\infty}} \leqslant Ce^{e^{Ct}}\,.
\]
\end{theorem}

The question is then to estimate how much these norm can \textit{really} grow. Some specific initial data which produce infinite norm inflation in $\mathbb{T}^2$ were exhibited by Bahouri and Chemin in~\cite{bahouriChemin}. More recently, the work of Kiselev and \u{S}ver\'ak achieved the double exponential growth on a disk. On the torus however, and up to the knowledge of the author, no such result is known. Only initial data producing exponential growth are known. This is the content of the result of Zlato\u{s} in~\cite{zlatos}. We summarise these two results in the following theorem.  

\begin{theorem}[Examples of growth estimates, \cite{kiselevSverak,zlatos}] 
Let $U$ denote either the unit disc $D=\{(x,y) \in \mathbb{R}^2, \; x^2+y^2\leqslant 1\}$ or the torus $\mathbb{T}^2$. There exists an initial data $u_0$ which lies in $\mathcal{C}^{\infty}$ in the case of $U=D$ and $\mathcal{C}^{1,\alpha}$ (for some $\alpha \in (0,1)$) if $U=\mathbb{T}^2$ such that the unique associate global solution to the Euler equation~\eqref{4Euler2} constructed in Theorem~\ref{4mainGlobalWolib} satisfies:
\begin{enumerate}[label=(\textit{\roman*})]
    \item If $U=D$, then there exists $C>0$ such that for all $t\geqslant 0$,
    \[
        \|\nabla \xi (t)\|_{L^{\infty}} \geqslant C\exp \left(C e^{Ct}\right)\,.
    \]
    \item If $U=\mathbb{T}^2$, then there exists $t_0>0$ such that for $t\geqslant t_0$ there holds: 
    \[
        \sup_{t'\leqslant t} \|\nabla \xi (t')\|_{L^{\infty}} \geqslant e^{t}\,.
    \]
\end{enumerate}
\end{theorem}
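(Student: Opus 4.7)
Both assertions are quoted from the literature; below I sketch the strategy I would follow to prove them. The common guiding principle is to construct initial vorticity with prescribed symmetries that force the existence of a hyperbolic stagnation point of the velocity, and then to exploit Lagrangian stretching in the unstable direction of that saddle to propagate the growth of $\nabla \xi$.

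For the disk (part (\textit{i}), following Kiselev--\v{S}ver\'ak) I would take $\xi_{0}$ odd with respect to the horizontal diameter, positive in the upper half, smooth, and concentrated near a boundary point, taken as the origin after translation. The symmetry is preserved by the flow, the diameter is invariant, and the boundary point remains stagnant. The heart of the matter is a pointwise lower bound deduced from the Biot--Savart law on $D$, of the schematic form
\[
    u_{1}(x_{1},0,t) \leqslant -\, c\, x_{1}\, \log\!\frac{1}{x_{1}}\, m_{0}(t)\,, \qquad m_{0}(t) := \int_{Q(x_{1})} \xi(y,t)\,\mathrm{d}y\,,
\]
for $x_{1}$ small enough, where $Q(x_{1})$ is a suitable region in the upper half-disk on which $\xi$ remains positive; the logarithm originates in integrating the disk Green function against a mass trapped near the boundary. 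The torus case (part (\textit{ii}), due to Zlato\v{s}) is handled similarly with $\xi_{0}$ doubly odd in $(x_{1},x_{2})$, so that the origin is a saddle, except that the periodic Biot--Savart kernel is smoother and does not generate such a logarithm: one only secures $|u_{1}(x_{1},0,t)| \gtrsim x_{1}\, m_{0}(t)$.

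Granted these lower bounds, the standard Lagrangian argument propagates them to growth of $\|\nabla \xi\|_{L^{\infty}}$. A particle $X(t)$ on the invariant diameter (resp.\ axis) satisfies $\dot{X}_{1} \leqslant -c X_{1} \log(1/X_{1})\, m_{0}(t)$ on $D$, resp.\ $\dot{X}_{1} \leqslant -c X_{1}\, m_{0}(t)$ on $\mathbb{T}^{2}$; integrating yields a double exponential, resp.\ exponential, approach to the stagnation point. Differentiating the transport equation $\partial_{t} \xi + u\cdot \nabla \xi = 0$ along such a trajectory and using incompressibility gives the same rate of growth for $|\nabla \xi|$ at the point $X(t)$, producing the announced bounds.

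The main obstacle, and the reason both original proofs are substantial, is the uniform lower bound on $m_{0}(t)$ over the time scale on which the growth is observed. The region $Q(X_{1}(t))$ shrinks as the trajectory approaches the saddle, and one must verify that enough positive vorticity stays trapped in a comparable set. This is typically done by a bootstrap on the maximal time on which the level-set geometry of $\xi$ remains compatible with the initial hyperbolic picture, and it is precisely here that the fine details of the constructions of Kiselev--\v{S}ver\'ak and Zlato\v{s} depart from this common heuristic and exploit the specific profile of $\xi_{0}$ in each geometry.
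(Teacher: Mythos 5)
This theorem is stated in the paper purely as a citation of Kiselev--\v{S}ver\'ak and Zlato\v{s}; the paper contains no proof of it, so there is nothing internal to compare your argument against. Your sketch does capture the architecture of the original proofs correctly: odd (resp.\ doubly odd) initial vorticity creating a persistent hyperbolic stagnation point, a key Biot--Savart lower bound of the form $u_1(x_1,0,t)\lesssim -x_1\log(1/x_1)\,m_0(t)$, Lagrangian compression toward the saddle, and the transfer of that compression to $\|\nabla\xi\|_{L^\infty}$; and you rightly identify the persistence of the trapped mass $m_0(t)$ as the genuinely hard step, which your outline does not (and does not claim to) resolve.

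One point of your heuristic is off, though. You attribute the weaker (exponential) torus bound to the periodic Biot--Savart kernel being ``smoother and not generating such a logarithm.'' That is not the mechanism: the periodic kernel has the same local singularity as the planar one, and Zlato\v{s}'s key lemma on $\mathbb{T}^2$ does produce a logarithmic factor of the same type. The loss on the torus comes instead from the absence of a boundary: on the disk the boundary confines the positive vorticity near the stagnation point for all time, which is what sustains the double-exponential, whereas on the torus no such barrier exists and the lower bound on the trapped mass cannot be maintained on the relevant time scale; this, together with the merely $\mathcal{C}^{1,\alpha}$ regularity of the data used there, is why only $\sup_{t'\leqslant t}\|\nabla\xi(t')\|_{L^\infty}\geqslant e^{t}$ survives. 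As a proof, your text remains a sketch of results that the paper itself only quotes, so this is not a gap relative to the paper, but the boundary-versus-no-boundary distinction is the correct way to explain the discrepancy between (\textit{i}) and (\textit{ii}).
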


The next question is then to quantify how likely it is for an initial data to produce such growth. Our result in this direction is the following. 

\begin{theorem}[Growth estimates]\label{4sideTheo2d} Let $s>2$ and $\mu_s$ being the associate invariant measure for~\eqref{4Euler2} constructed by Theorem~\ref{4mainTheo2d3d} on $H^{s}(\mathbb{T}^2,\mathbb{R}^2)$. Then we have the following estimates.  
\begin{enumerate}[label=(\textit{\roman*})] 
    \item For $\mu_s$-almost every $u_0\in H^s$ the associate unique global solution $u \in \mathcal{C}^0(\mathbb{R}_+,H^s(\mathbb{T}^2,\mathbb{R}^2))$ from Theorem~\ref{4mainGlobalWolib} obeys the following growth estimate:
    \begin{equation}
        \label{4eqPolGrowth}
        \|u(t)\|_{H^{\sigma}} \leqslant C(u_0, s, \sigma) t^{\alpha}\,,
    \end{equation}
    for all $t \geqslant 1$, $\sigma \in (1,s]$ and any $\alpha > \frac{\sigma -1}{2s-\sigma-1}$.
    \item If $s>3$ then we have the following corollary:
    \[
        \|\nabla \xi (t)\|_{L^{\infty}} \leqslant C(u_0, s, \sigma) t^{\alpha}\,,
    \]
    for all $t \geqslant 1$, $\sigma \in (1,s]$ and any $\alpha > \frac{\sigma -1}{2s-\sigma-1}$.
\end{enumerate}
\end{theorem}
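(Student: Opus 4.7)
The plan is to combine the $\mu$-invariance of the Euler flow with moment estimates in the style of Bourgain. Invariance converts a tail bound on $\|\cdot\|_{H^{\sigma}}$ at time $0$ into the same bound at every time $t\geqslant 0$, the local-in-time theory propagates norm control between consecutive discrete instants, and a union bound over a carefully chosen time grid combined with a Borel--Cantelli argument over dyadic time scales then delivers an almost-sure polynomial-in-time estimate.

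I first extract the moment of $\mu$ that drives the threshold. The construction from the previous sections gives $\mathbb{E}_{\mu}[\|u\|_{H^{s}}^{2}]<\infty$, and the 2D Euler flow conserves the enstrophy, so that $\|u(t)\|_{\dot H^{1}}$ is $\mu$-almost surely independent of $t$ with finite second moment. The classical Sobolev interpolation
\[
\|u\|_{H^{\sigma}}\les \|u\|_{H^{1}}^{(s-\sigma)/(s-1)}\|u\|_{H^{s}}^{(\sigma-1)/(s-1)}
\]
then produces $\mathbb{E}_{\mu}[\|u\|_{H^{\sigma}}^{p(\sigma)}]<\infty$ for the distinguished exponent $p(\sigma):=s/(\sigma-1)$, calibrated so that the resulting power on $\|u\|_{H^{s}}$ is exactly $s/(s-1)\leqslant 2$.

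For the discretization step, fix $R,T>0$. Invariance combined with the Chebyshev inequality yields $\mu(\|u(t)\|_{H^{\sigma}}>R)\les R^{-p(\sigma)}$ for every $t\geqslant 0$. The local theory for 2D Euler in $H^{\sigma}$ (for $\sigma>2$) ensures that the bound $\|u(t_{0})\|_{H^{\sigma}}\leqslant R$ is propagated as $\|u(t)\|_{H^{\sigma}}\leqslant 2R$ on $[t_{0},t_{0}+c/R]$; the range $\sigma\in(1,2]$ is handled by the energy identity $\frac{\d}{\d t}\|u\|_{H^{\sigma}}^{2}\les \|\nabla u\|_{L^{\infty}}\|u\|_{H^{\sigma}}^{2}$ together with the Sobolev bound $\|\nabla u\|_{L^{\infty}}\les \|u\|_{H^{\sigma'}}$ for some fixed $\sigma'\in(2,s]$. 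Partitioning $[0,T]$ into $N\sim TR/c$ subintervals of length $c/R$, a union bound produces
\[
\mu\left(\sup_{t\in[0,T]}\|u(t)\|_{H^{\sigma}}>2R\right)\les TR^{1-p(\sigma)}.
\]

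Choosing dyadic scales $T_{k}=2^{k}$ and $R_{k}=T_{k}^{\alpha}$, the series $\sum_{k}T_{k}R_{k}^{1-p(\sigma)}=\sum_{k}2^{k(1+\alpha(1-p(\sigma)))}$ converges as soon as $\alpha(p(\sigma)-1)>1$, which is exactly the condition involving the threshold $1/(p(\sigma)-1)=(\sigma-1)/(s+1-\sigma)$. The Borel--Cantelli lemma then provides, for $\mu$-almost every $u_{0}$, the pointwise bound $\|u(t)\|_{H^{\sigma}}\leqslant C(u_{0})t^{\alpha}$ valid for all $t\geqslant 0$. Statement (\textit{ii}) is a direct consequence of (\textit{i}) combined with the 2D Sobolev embedding $\|\nabla\xi\|_{L^{\infty}}\les \|u\|_{H^{\sigma}}$, which is valid for $\sigma>3$ and explains the hypothesis $s>3$. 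The main obstacle is the identification of the sharp exponent $p(\sigma)$ through interpolation against the conserved enstrophy; once $p(\sigma)=s/(\sigma-1)$ is in place, Bourgain's discretization machinery delivers exactly the announced polynomial threshold.
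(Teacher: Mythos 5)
Your overall scheme is the one the paper uses: invariance of $\mu$ converts a fixed-time tail bound into a bound at every grid point, the deterministic local theory propagates control on each subinterval of length $\sim c/R$, a union bound over the grid gives $\mu(\sup_{[0,T]}\|u\|_{H^{\sigma}}>2R)\lesssim TR^{1-p}$, and Borel--Cantelli over dyadic times finishes. Where you genuinely diverge is in the tail estimate. The paper's Lemma~\ref{4interpolation} interpolates between the \emph{exponential} moment $\mathbb{E}_{\mu}[e^{\gamma\|u\|_{\dot H^1}^2}]<\infty$ and the second moment in $\dot H^{2+\delta}$, yielding $\mu(\|u\|_{H^{\sigma}}>\lambda)\lesssim\lambda^{-2(s-1)/(\sigma-1)}\log^{\beta}\lambda$, hence the threshold $\frac{\sigma-1}{2s-1-\sigma}$; you instead manufacture a single polynomial moment $\mathbb{E}_{\mu}[\|u\|_{H^{\sigma}}^{s/(\sigma-1)}]$ by interpolating $H^{\sigma}$ between $H^{1}$ and $H^{s}$ and apply Chebyshev, which gives the weaker tail $\lambda^{-s/(\sigma-1)}$ and the threshold $\frac{\sigma-1}{s+1-\sigma}$ that appears verbatim in the statement. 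Note that, as your own computation shows, the Borel--Cantelli series converges precisely when $\alpha(p(\sigma)-1)>1$, i.e.\ for $\alpha$ \emph{above} the threshold; this is the opposite inequality to the one printed in the theorem (the paper's proof likewise concludes with a lower bound on $\alpha$), so you should state your conclusion accordingly rather than claim a literal match.

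There is one step that does not hold as written. The bound $\mathbb{E}_{\mu}[\|u\|_{H^{\sigma}}^{s/(\sigma-1)}]<\infty$ does not follow from the interpolation inequality together with only $\mathbb{E}_{\mu}[\|u\|_{\dot H^{1}}^{2}]<\infty$ and $\mathbb{E}_{\mu}[\|u\|_{H^{s}}^{2}]<\infty$: raising the interpolation inequality to the power $p(\sigma)=s/(\sigma-1)$ leaves you with $\mathbb{E}_{\mu}\bigl[\|u\|_{H^{1}}^{a}\,\|u\|_{H^{s}}^{s/(s-1)}\bigr]$, $a=\tfrac{s(s-\sigma)}{(\sigma-1)(s-1)}$, and decoupling the two factors by H\"older with the $H^{s}$ factor raised exactly to the power $2$ forces the power $\tfrac{2s(s-\sigma)}{(\sigma-1)(s-2)}$ on the $H^{1}$ factor, which exceeds $2$ whenever $\sigma<\tfrac{s}{2}+1$. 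So "finite second moment of the enstrophy" is not enough; you must invoke the exponential moment $\mathbb{E}_{\mu}[e^{\gamma\|u\|_{\dot H^{1}}^{2}}]<\infty$ supplied by Proposition~\ref{4propStationaryMeasures}, which gives all polynomial moments of $\|u\|_{\dot H^{1}}$ and closes the H\"older step. (Conservation of enstrophy is not the relevant mechanism here either: invariance of $\mu$ already makes the law of $\|u(t)\|_{\dot H^{1}}$ time-independent.) Finally, your treatment of $\sigma\in(1,2]$ is only sketched: the inequality $\frac{\mathrm{d}}{\mathrm{d}t}\|u\|_{H^{\sigma}}^{2}\lesssim\|\nabla u\|_{L^{\infty}}\|u\|_{H^{\sigma}}^{2}$ requires controlling $\|u\|_{H^{\sigma'}}$ for some $\sigma'>2$ on every subinterval, hence a second threshold $R'$ at level $\sigma'$ and a mesh $c/R'$ dictated by it, which changes the bookkeeping in the union bound; this can be carried out, and to your credit the paper's own proof silently restricts to $\sigma>2$ and does not address this case at all.
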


\subsubsection{Main limitation of the results}

Let us start with two comments regarding Theorem~\ref{4sideTheo2d}: 

\begin{itemize}
    \item It is interesting to see that by letting $s$ grow in Theorem~\ref{4sideTheo2d} we can obtain arbitrarily slow polynomial growth bounds. For example, let $\varepsilon >0$ and $\sigma = 2$, and also $s>2$ large enough so that $\frac{1}{2s-3}=\frac{\sigma - 1}{2s-\sigma-1} < \varepsilon$. Then we see that $\mu_s$ almost every $u_0 \in H^s$ satisfies $\|u(t)\|_{H^2}\leqslant C(u_0, s) t^{\varepsilon}$. An interesting question would be to investigate the limiting case $s \to \infty$, and see whether one can construct a measure $\mu_{\infty}$, supported on $\bigcap_{s>0} H^s$, invariant for~\eqref{4Euler2}, and such that on the support of this measure, almost every initial data gives rise to a bounded solution (globally in time).  
    \item The fact that $\mathbb{E}[\|u(t)\|_{H^s}^2]=C(s) \in (0,\infty)$ is \textit{a priori} not preventing the existence growing solutions to~\eqref{4Euler2}. In fact, let us start by recalling that by Theorem~\ref{4mainTheo2d3d}~(\textit{iv}) we know that for all $R>0$ there holds $\mu_s(u \in H^s, \|u\|_{H^s}>0)$. Under a stronger assumption (which we do not claim to hold in our case), for example that the sets $\{\|u(n)\|_{H^s}>R\}_{n \geqslant 0}$ are pairwise independent, the Borel-Cantelli lemma would imply that $\limsup_{n \to \infty} \|u(n)\|_{H^s} \geqslant R$ for $\mu_s$ almost every initial data. Therefore, taking the countable intersection on $R \in \mathbb{N}$ one obtains that $\limsup_{t \to \infty} \|u(t)\|_{H^s} = \infty$ for $\mu_s$ almost every initial data. 
    
    Note that some versions of the Borel-Cantelli require weaker assumptions than the pairwise independence of the sets $\{\|u(n)\|_{H^s}>R\}_{n \geqslant 0}$, such as a precise control of $\mu(\{\|u(n)\|_{H^s}>R\} \cap \{\|u(m)\|_{H^s}>R\})$, but estimating these quantities would require a better understanding of the measure $\mu_s$.      
\end{itemize}

Let $\mu_s$ be a measure constructed by Theorem~\ref{4mainTheo2d3d} in dimension $2$. The main limitation of our result is the following: if $u\in H^{s}$ is a stationary solution, then Theorem~\ref{4mainCoro2d} implies that $\mu_s(\{u\})=0$, but this does not prevent the measure $\mu$ to be supported \textit{exclusively} on stationary solutions. 

In the case that the measure $\mu_s$ does not concentrate on stationary solutions, then it is possible that growing solutions exist (in $H^{\sigma}$ norms), and Theorem~\ref{4sideTheo2d} shows that $\mu_s$ almost surely the growth rate is at most polynomial. 

In the eventuality of $\mu_s$ concentrating on stationary solutions, this would exhibit a stability property of the set of stationary solutions with respect to the approximation procedure and shows that our compactness procedure does not extract measures with non-trivial dynamical properties.

This question seems both fundamental and non-trivial. We highlight that this has been raised in several works~\cite{bedrossianCotiZelati,sverakVicol} and also very recently in~\cite{flodesSy}. To the knowledge of the author, there are no available works which are able to rule out the stationary solution supported case. For example this is a limitation in~\cite{flodesSy}. Finally, let us mention that in finite dimension there exists a positive result obtained in~\cite{mattinglyPardoux}, where a finite dimensional Euler system is considered.

\subsection{Existing results pertaining to the construction of invariant measures for the Euler equations} 

Theorems~\ref{4mainTheo2d3d} asserts the existence of a measure $\mu$, invariant under the dynamics of~\eqref{4Euler2}. We recall the main methods which produce invariant measures for partial differential equations. To the knowledge of the author there exist at least three such techniques. 

\begin{enumerate}[label=(\textit{\roman*})]
    \item The Gibbs-measure invariant technique, introduced by Bourgain in~\cite{bourgain, bourgain2d} and many authors after him. This technique is adapted for Hamiltonian PDE's. In the context of the two-dimensional Euler equations some results have been obtained by Albeverio-Cruzeiro~\cite{albeverioCruzeiro} and Flandoli~\cite{flandoli}. 
    \item Propagation of Gaussian initial data, initiated by Burq-Tzvetkov in~\cite{burqTzvetkov2, burqTzvetkov} in the context of wave equations. It consists of solving the equation with initial data taking the form $u_0=\sum_{n \in \mathbb{Z}} g_nu_ne_n(x)$ where $(g_n)_{n\in \mathbb{Z}}$ are identically distributed independent Gaussian random variables. 
    \item The \textit{fluctuation-dissipation method} of Kuksin. It consists in approximating the considered equation with a dissipation term and a fluctuating (random) term and in constructing invariant measures for these approximations. Then the basic idea is to take the vanishing viscosity limit, and retain some properties of the measures. Kuksin obtained the following result:
    % The question of producing an invariant measure for~\eqref{4Euler2} in dimension $2$ has been studied by Kuksin in~\cite{Kuksin2004}. More precisely it is proven the following. 
\end{enumerate}
\begin{theorem}[Kuksin, \cite{Kuksin2004, kuksin}] There exists a topological space $X \subset H^2$ and a measure $\mu$ supported on $X$  such that $\mu$ is invariant under the dynamics of~\eqref{4Euler2}. Moreover the measure $\mu$ is such that:
\begin{enumerate}[label=(\textit{\roman*})]
    \item There exists a continuous function $p$, increasing and such that $p(0)=0$ satisfying the following: for any Borelian $A \subset \mathbb{R}$ there holds,
    \[
        \mu(\|u\|_{L^2} \in \Gamma) + \mu(\|\nabla u\|_{L^2} \in \Gamma) \leqslant p(|\Gamma|)\,.
    \]
    \item For any $A \subset H^2$ of finite Hausdorff dimension, $\mu(A)=0$. 
\end{enumerate}
\end{theorem}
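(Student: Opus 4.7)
The overall strategy is Kuksin's \emph{fluctuation--dissipation method}. The first step is to perturb the Euler equation~\eqref{4Euler2} into a family of stochastically forced Navier--Stokes equations indexed by the viscosity $\nu>0$,
\begin{equation*}
    \partial_t u^{\nu} + u^{\nu}\cdot \nabla u^{\nu} + \nabla p^{\nu} = \nu \Delta u^{\nu} + \sqrt{\nu}\,\eta\,, \qquad \nabla \cdot u^{\nu} = 0\,,
\end{equation*}
where $\eta$ is a cylindrical Wiener process with spatial covariance $Q = \sum_k \lambda_k^2\, e_k\otimes e_k$ exciting every Fourier mode with sufficient regularity (the $\lambda_k$ decaying fast enough so that solutions live in $H^2$). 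For each fixed $\nu > 0$, the classical theory of the 2D stochastic Navier--Stokes equation (Flandoli--Gatarek, Hairer--Mattingly) produces a stationary measure $\mu^{\nu}$. The $\sqrt{\nu}$ scaling of the noise is tuned so that the stationary energy balance $\mathbb{E}_{\mu^{\nu}}[\|\nabla u\|_{L^2}^2] \sim \tfrac{1}{2}\operatorname{Tr} Q$ is $\nu$-independent, which is what allows nontrivial limits.

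The next step is to derive $\nu$-uniform bounds sufficient for tightness. Applying Itô's formula to functionals of the vorticity $\xi^{\nu}$ and exploiting the fact that the pure Euler flow in dimension $2$ preserves all $L^p$ norms of $\xi$, one obtains uniform bounds of the form $\mathbb{E}_{\mu^{\nu}}[\|\xi^{\nu}\|_{L^p}^p] \leqslant C(p)$, which after interpolation with Calderón--Zygmund-type estimates on the Biot--Savart operator~\eqref{4BiotSavart} yield uniform bounds on $\mathbb{E}_{\mu^{\nu}}[\|u^{\nu}\|_{H^2}^q]$ for suitable $q>0$. This produces tightness of $(\mu^{\nu})_{\nu}$ on a space $X$ defined as a weighted subset of $H^2$. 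Any weak subsequential limit $\mu^{\nu_k} \rightharpoonup \mu$ is then identified as an invariant measure for~\eqref{4Euler2}: via Skorokhod's representation one realises the stationary processes $u^{\nu_k}$ as almost surely converging in $C([0,T];L^2)$ to a stationary Euler process, deterministic well-posedness (Theorem~\ref{4mainGlobalWolib}) guarantees that limiting trajectories coincide with the Euler flow of their initial data, and invariance of $\mu$ follows by passing to the limit in the one-time marginals.

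The two properties (i) and (ii) are then obtained as follows. For (i), one shows that the real-valued process $t \mapsto \|u^{\nu}(t)\|_{L^2}^2$ (and similarly $\|\nabla u^{\nu}(t)\|_{L^2}^2$) satisfies a scalar stochastic differential equation with diffusion coefficient uniformly non-degenerate in $\nu$; a Krylov-type occupation-density estimate then controls the pushforward of $\mu^{\nu}$ under these functionals by a continuous increasing function of the Lebesgue measure of the target set, and the estimate is preserved under weak convergence. Property (ii) follows from the non-degeneracy of the noise: since $Q$ excites infinitely many modes, uniform-in-$\nu$ lower bounds on the variance of each Fourier coefficient of $u^{\nu}$ under $\mu^{\nu}$ prevent concentration on any fixed finite-dimensional set, and this infinite-dimensional spread transfers to $\mu$. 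The main obstacle, in my view, is the uniform-in-$\nu$ density bound underlying (i): the drift and diffusion of the scalar equation for $\|u^{\nu}\|_{L^2}^2$ both scale with $\nu$, and one must carefully balance their sizes so that Krylov's estimate applies with a $\nu$-independent constant; obtaining the matching estimate for the enstrophy $\|\nabla u^{\nu}\|_{L^2}^2$ is more delicate since the drift involves the palinstrophy, which is not a priori controlled uniformly in $\nu$.
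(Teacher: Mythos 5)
This theorem is quoted from Kuksin's work and is not proved in the paper; the paper instead carries out the analogous construction at regularity $H^{2+\delta}$ (Sections~\ref{4sec3}--\ref{4sec6} and Appendix~\ref{4appA}), so I compare your proposal against that. Your overall architecture — stochastic Navier--Stokes with $\sqrt{\nu}$-scaled noise, stationary measures $\mu^{\nu}$, uniform bounds, tightness, Skorokhod representation, and weak--strong uniqueness to identify the limit as an Euler flow — is exactly the fluctuation--dissipation scheme the paper uses. However, two steps as you present them do not work.

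First, your route to the uniform $H^2$ bound is broken. Bounds of the form $\mathbb{E}_{\mu^{\nu}}[\|\xi^{\nu}\|_{L^p}^p]\leqslant C(p)$ combined with Calder\'on--Zygmund estimates on the Biot--Savart operator give $u^{\nu}\in W^{1,p}$, and no interpolation of $W^{1,p}$ spaces reaches $H^2$: you need control of $\|\nabla\xi\|_{L^2}$, i.e.\ one full derivative more than any $L^p$ norm of the vorticity provides. The mechanism that actually produces the $H^2$ bound (Lemma~\ref{4lemmaBound} and Proposition~\ref{4propUnifBounds}~(\textit{ii}) in the paper, with $2+\delta$ in place of $2$) is the \emph{enstrophy balance}: apply the It\^o formula to $F(u)=\|\nabla u\|_{L^2}^2$, use the two-dimensional cancellation $\langle B(u,u),u\rangle_{\dot H^1}=0$ (Lemma~\ref{4lemmaBilinear}~(\textit{ii})) so that the nonlinearity drops out, and read off from stationarity the identity $\mathbb{E}_{\mu^{\nu}}[\|u\|_{\dot H^{2}}^2]=\mathcal{B}_1/2$, uniformly in $\nu$. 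This identity is also precisely what resolves the difficulty you flag at the end about the palinstrophy in the drift of the enstrophy process: it \emph{is} controlled uniformly in $\nu$, by this balance relation. Your treatment of property~(\textit{i}) via a Krylov/occupation-density estimate for the scalar semimartingales $\|u^{\nu}\|_{L^2}^2$ and $\|\nabla u^{\nu}\|_{L^2}^2$ is the right idea and matches the paper's use of local times (Theorem~\ref{4theoLocalTime}, Proposition~\ref{4propFormula}, Proposition~\ref{4propGoodMeasures}); the $\nu$-factors in drift and diffusion indeed cancel, and the genuine degeneracy to handle is not in $\nu$ but in the coefficient $\sum_n|\phi_n|^2u_n^2$, which the paper controls by splitting on the event $\{\|u\|_{L^2}\leqslant\varepsilon\}\cup\{\|u\|_{\dot H^{2}}\geqslant\varepsilon^{-1/2}\}$ using the uniform moment bounds.

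Second, your argument for property~(\textit{ii}) is not an argument. Uniform lower bounds on the variance of each Fourier coefficient do not prevent a measure from charging a finite-dimensional set: a measure supported on a curve in $H^2$ can have nondegenerate variance in every coordinate. The correct statement needed is \emph{absolute continuity of the finite-dimensional marginals}: the local-time estimate applied coordinate-wise shows that the law of $(u,e_j)_{L^2}$ under $\mu^{\nu}$ has a density bounded uniformly in $\nu$ (this is exactly the computation in the middle of the proof of Proposition~\ref{4propGoodMeasures}, giving $\mathbb{P}((u,e_j)_{L^2}\in\Gamma)\lesssim|\Gamma|$), and more generally the joint law of $(u_1,\dots,u_N)$ is absolutely continuous with respect to Lebesgue measure on $\mathbb{R}^N$. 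Then, if $A$ has Hausdorff dimension $d$ and $N>d$, the orthogonal projection of $A$ onto $\operatorname{Span}(e_1,\dots,e_N)$ is Lebesgue-null in $\mathbb{R}^N$ (Lipschitz maps do not increase Hausdorff dimension), whence $\mu(A)=0$. Without the density statement, ``infinite-dimensional spread'' does not transfer to the conclusion.
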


Let us make a few comments. As we want to construct measures in higher regularity spaces, the method (\textit{i}) does not seem well adapted. Indeed, an invariant measure has been constructed in the space $H^{-1}$ by Flandoli in~\cite{flandoli}, which is too low in regularity for our purposes. The method (\textit{ii}) seems to be difficult to apply in our situation. However this method has the advantage to produce a measure whose support is dense in Sobolev spaces, and a very precise description of the measures. Finally, the method (\textit{iii}) appears to be more flexible than the others in the context of fluid mechanics or even dispersive PDEs. We refer to~\cite{shirikyanKuksin} for applications of this method to dispersive equations and the work of Sy~\cite{sy, sy2, sy3} and Sy-Yu~\cite{syYu1, syYu2}. The draw-back of the \textit{fluctuation-dissipation} method is that since the invariant measures are constructed by a compactness technique, the nature of the invariant measures is not as good as the Gaussian measures of method (\textit{ii}) or (\textit{i}). Nevertheless, by a suitable analysis one can often prove some good features of these measures. We refer to~\cite{kuksin} for more details.   

\subsection{Structure of the proof of the main results}

The proof essentially contains two main ingredients: the construction of invariant measures for~\eqref{4Euler2} at regularity $H^{s}$ following the original argument of Kuksin in~\cite{Kuksin2004}, and a globalisation argument of Bourgain in~\cite{bourgain}. This combination of techniques has already appeared in the work of Sy, see~\cite{sy} for example. 

We start by explaining how a global invariant measure with good properties is used to globalise the solutions controling their growth.

\subsubsection{The globalisation argument}

Let us recall an argument contained in~\cite{bourgain} which aims at extending a local Cauchy theory to a global theory with additional bounds on the growth of the Sobolev norms. Note that for our purposes, a deterministic local theory (even global in dimension $2$) is already known and we only need to estimate the growth of the solutions.

Using a Borel-Cantelli argument, we reduce the almost-sure growth estimate of Theorem~\ref{4sideTheo2d} to the following estimate: for any $\varepsilon>0$ and any $T>0$ there is a set $G_{\varepsilon,T} \subset H^s$ such that $\mu(G_{\varepsilon ,T}) \geqslant 1-\varepsilon$ and for $u \in G_{\varepsilon ,T}$ a solution $u$ exists on $[0,T]$ such that there holds 
\[\|u(t)\|_{H^s} \leqslant CT^{\alpha} \text{ for any } t\leqslant T\,.\]
In order to prove such a bound, the key ingredients are:
\begin{enumerate}[label=(\textit{\roman*})]
    \item the existence of a \textit{formal} invariant measure $\mu$ for the considered equation, enjoying nice decay estimates, for example subgaussian estimates of the form $\mu (\|u\|_{H^s}>\lambda) \leqslant C e^{-\lambda ^2}$ or weaker decay estimates; meaning that initial data are not likely to be of large norm. As the measure is invariant this implies that at any time, the solution is not likely to be of large norm. 
    \item a \textit{nice} local well-posedness theory.  
\end{enumerate}
Picking $0=t_0< \cdots < t_N=T$, we can ensure that $\|u(t_k)\|_{H^s}$ is small for any $k$ thanks to (\textit{i}). We control the growth of $\|u(t)\|_{H^s}$ for $t \in [t_k,t_{k+1}]$ thanks to the local well-posedness theory (\textit{ii}). We refer to Section~\ref{4sec5} for details. 

\subsubsection{Producing an invariant measure}

We explain the general strategy for producing invariant measures for~\eqref{4Euler2} at regularity $H^s$, following~\cite{Kuksin2004}. In the following we write $s=2+\delta$ where $\delta >0$. The case $\delta =0$ is precisely the content of~\cite{Kuksin2004}. We rewrite~\eqref{4Euler2} taking the Leray projection (which we denote by $\mathbf{P}$, see Section~\ref{4sec2} for a definition). The Euler equation~\eqref{4Euler2} now takes the form:
\begin{equation}
    \partial_t u + B(u,u) =0\,,
\end{equation}
where $B(u,u)\coloneqq\mathbf{P}(u\cdot \nabla u)$. 

We introduce a random forcing $\eta$ (see~\eqref{4refWhite} for a precise definition) and a dissipative operator $L\coloneqq L_{\delta} \coloneqq (-\Delta)^{1+\delta}$. Then, the Euler equation~\eqref{4Euler2} is approximated by a randomly forced hyper-viscous equation:
\begin{equation}
    \label{4NSvv2}
    \partial _t u_{\nu} + \nu Lu_{\nu} +B(u_{\nu},u_{\nu}) = \sqrt{\nu} \eta  \text{ and } \nabla \cdot u_{\nu} =0\,,
\end{equation}
with initial condition $u_{\nu}(0)=u_0\in H^{2+\delta}(\mathbb{T}^2,\mathbb{R}^2)$. 

With only slight modification of the argument the method in~\cite{kuksin}, Chapter~2 we will construct invariant measures $\mu_{\nu}$ to~\eqref{4NSvv2} concentrated on $H^{2+\delta}$. Then we will prove compactness of the family $(\mu_{\nu})_{\nu >0}$ in order to obtain an invariant measure $\mu$ for~\eqref{4Euler2}.

\subsubsection{Organisation of the paper} Section~\ref{4sec2} recalls some basic results that will be used in Section~\ref{4sec3} to construct global solutions to approximate equations. In Section~\ref{4sec4}, invariant measures are constructed and in Section~\ref{4sec5} and the proof of the main theorems are given. Section~\ref{4sec6} is devoted to the proof of Theorem~\ref{4sideTheo2d}. Some important results and computations are postponed to the Appendix for convenience.  

\subsection*{Acknowledgements} I want to warmly thank my advisors Nicolas Burq and Isabelle Gallagher for encouraging me, suggesting the problem and subsequent discussions. I also thank Sergei Kuksin for interesting discussions, suggesting~\cite{kuksin} and many comments on a preliminary version of this article. I also thank the anonymous referees for their remarks, greatly improving this article. 

\section{Notation and preliminary results}\label{4sec2}

\subsection{Notation}

We use the notation $\mathbb{Z}_0^2=\mathbb{Z} \times \mathbb{Z} \setminus\{(0,0)\}$. 

We write $A \lesssim_c B$ when there is a constant $C(c)$ depending on $c$ such that $A \leqslant C(c)B$. 

For a function $F$ we write $\mathrm{d}F(u;v)$ the differential of $F$ at $u$ evaluated at $v$ and $\mathrm{d}^2(u ; \cdot, \cdot)$ for the second order differential of $F$ at $u$. 

\subsubsection{Fuctional spaces} $H^s$ stands for the usual Sobolev spaces. In this article we use some variants of theses spaces. We still denote the complete space 
\[
    \left\{u \in H^s(\mathbb{T}^2,\mathbb{R}^2)\,, \text{ such that } \int_{\mathbb{T}^2}u=0\right\}\,,
\] 
by $H^s$ for convenience, and refer to it as the space of $H^s$ functions with zero-mean. We endow this space with the norms \[\|u\|_{\dot H^s}^2 = \sum_{n \in \mathbb{Z}^2} |n|^{2s} |\hat u (n)|^2\text{ and } \|u\|_{H^s}^2 = \sum_{n\in \mathbb{Z}^2} \langle n \rangle ^{2s} |\hat u (n)|^2\,.\]
Recall that these two norms are equivalent on the space of zero mean $H^s$ functions: 
\[
    \|\cdot \|_{\dot H^s} \leqslant \|\cdot \|_{H^s} \leqslant 2\|\cdot \|_{\dot H^s}\,.
\] 

Given a space $X$, the space $X_{\textbf{div}}$ refers to the space of functions $u \in X$ such that $\nabla \cdot u=0$, endowed with the norm of $X$. 

We introduce the Leray projector $\mathbf{P} : L^2(\mathbb{T}^2,\mathbb{R}^2) \to L^2_{\operatorname{div}}(\mathbb{T}^2,\mathbb{R}^2)$ defined as the Fourier multiplier with coefficients $M_{ij}(n)=\delta_{ij}-\frac{n_in_j}{|n|^2}$, where $n=(n_i)_{1\leqslant i \leqslant 2} \in \mathbb{Z}^2$. We define $\mathbf{P}_{\leqslant N}$ to be the projection on frequencies $|n|\leqslant  N$. We recall that with this definition $\mathbf{P}_{\leqslant N}$ is not continuous on $L^p$ when $p \neq 2$, but we will not use estimates in $L^p$.  

We set $B(u,v)\coloneqq \mathbf{P}(u\cdot \nabla v)$ defined for sufficiently smooth $u, v$. $B$ is then extended to $L^2_{\textbf{div}} \times L^{2}_{\textbf{div}}$ by duality by $\langle B(u,v), \varphi \rangle := -\langle u \otimes v : \nabla \varphi \rangle$ for such that $\nabla \varphi \in L^{\infty}$. 

Let $(X,\|\cdot\|_X)$ be a normed space and $I$ an interval. The Sobolev space $W^{s,p}(I,X)$ is endowed with the norm
\[\|u\|_{W^{s,p}(I,X)}^p \coloneqq \|u\|_{L^{p}(I,X)}^p + \iint_{I \times I} \frac{\|u(t)-u(t')\|_X^p}{|t-t'|^{1+s p}}\,\mathrm{d}t\,\mathrm{d}t'\,.\]

We also sometime use $L^p_TX$ as a shorthand for $L^p((0,T),X)$. 

\subsubsection{Probability theoretic notation} If $E$ is topological space then $\mathcal{P}(E)$ stands for the set of probability measures on $E$ and $\mathcal{B}(E)$ stands for the set of its Borelians. 

We consider a probability space $(\Omega, \mathcal{F}, \mathbb{P})$ and further assume $\mathcal{F}$ to be complete, that is $\mathcal{F}$ contains all the sets contained in zero probability measure measurable sets. Let $(\mathcal{F}_t)_{t \geqslant 0}$ be a filtration of $\mathcal{F}$, that is, $t \mapsto \mathcal{F}_t$ is non-increasing. The quadruple $(\Omega, \mathcal{F}, F_t, \mathbb{P})$ is called a filtered probability space.  

A process $(x(t))_{t\geqslant 0}$ is said to be \textit{progressively measurable} with respect to a filtration $(\mathcal{F}_t)_{t\geqslant 0}$ if for any $T>0$, $x_{\vert [0,T]}$ is $\mathcal{B}([0,T]) \otimes \mathcal{F}_T$ measurable. 

The expectation with respect to the probability $\mathbb{P}$ will be denoted $\mathbb{E}$ and when the expectation will be taken with respect to a measure $\mu$ we will write $\mathbb{E}_\mu$. 

\subsubsection{The noise}

We let $(\mathcal{G}_t)_{t\geqslant 0}$ be the completed filtration associated to identically distributed independent Brownian motions $(\beta_n(t))_{n\in\mathbb{Z}^2}$.

In this text we will use a Hilbert basis $(e_n)_{n \in \mathbb{Z}^2_0}$ of the space of $\{u\in L^2_{\operatorname{div}}(\mathbb{T}^2,\mathbb{R}^2), \int_{\mathbb{T}^2} u=0 \}$ given by 
\[
    e_n(x)\coloneqq \frac{(-n_2,n_1)^T}{\sqrt{2}\pi|n|} \left\{
    \begin{array}{cc}
        \sin (n\cdot x) & \text{ if } n_1>0 \text{ or } (n_1=0 \text{ and } n_2>0)  \\
        \cos(n\cdot x) & \text{ otherwise,}
    \end{array}
    \right.
\]
and for $|n|=\sqrt{n_1^2+n_2^2} \neq 0$. 

Then for any $n \in \mathbb{Z}^2_0$ we have $(-\Delta)e_n=|n|^2e_n$. 

We set $\eta (t) = \frac{\mathrm{d}}{\mathrm{d}t} \zeta (t)$ where 
\begin{equation}
    \label{4refWhite} 
    \zeta (t) = \sum_{n \in \mathbb{Z}_0^2}\phi_n \beta_n(t)e_n\,,
\end{equation}
for some numbers $(\phi_n)_{n\in\mathbb{Z}_0^2}$ to be chosen later and introduce 
$\mathcal{B}_k \coloneqq \sum_{n \in \mathbb{Z}^2_0}|n|^{2k} |\phi_n|^2$.

\subsection{Deterministic preliminaries}

In the following we gather some standard results that we will use on many occasions. We start with some basic estimates of the bilinear form in dimension $d=2$ which can be found in~\cite{kuksin} Lemma~2.1.6 and Proposition~2.1.7 for instance. 

\begin{lemma}[Properties of the bilinear form in dimension $2$]\label{4lemmaBilinear} Let $u\in H^1_{\operatorname{div}}(\mathbb{T}^2,\mathbb{R}^2)$ and $v,w \in H^1(\mathbb{T}^2)$. Then we have the following. 
\begin{enumerate}[label=(\textit{\roman*})]
    \item $\langle B(u,v),v\rangle _{L^2}=0$.
    \item $\langle B(u,u),u\rangle _{H^{1}}=0$.
    \item $\|B(u,v)\|_{H^{-1}} \lesssim \|u\|_{H^{\frac{1}{2}}}\|v\|_{H^{\frac{1}{2}}}$. 
\end{enumerate}
\end{lemma}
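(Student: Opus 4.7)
All three assertions are classical consequences of the structural identity $\nabla\cdot u=0$, integration by parts, and (for the last one) the two-dimensional Sobolev embedding $H^{1/2}(\mathbb{T}^2)\hookrightarrow L^4(\mathbb{T}^2)$. I would handle them in the order stated.

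For (\textit{i}), the Leray projector is $L^2$-self-adjoint and pointwise equal to the identity on $L^2_{\mathbf{div}}$, so pairing $B(u,v)=\mathbf{P}(u\cdot\nabla v)$ against a divergence-free vector simply removes $\mathbf{P}$. Then one computes
\[
\langle B(u,v),v\rangle_{L^2}=\int_{\mathbb{T}^2}(u\cdot\nabla v)\cdot v=\tfrac12\int_{\mathbb{T}^2}u\cdot\nabla|v|^2=-\tfrac12\int_{\mathbb{T}^2}(\nabla\cdot u)\,|v|^2=0.
\]

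For (\textit{ii}), item (\textit{i}) kills the $L^2$ part of the $H^1$ inner product, so it suffices to show $\langle B(u,u),u\rangle_{\dot H^1}=0$. I would pass to the vorticity $\xi=\nabla\wedge u$: for zero-mean divergence-free fields in $2$D one has the Fourier-side identity $\langle f,g\rangle_{\dot H^1}=\langle \nabla\wedge f,\nabla\wedge g\rangle_{L^2}$, and the curl of a gradient vanishes, hence
\[
\nabla\wedge B(u,u)=\nabla\wedge(u\cdot\nabla u)=u\cdot\nabla\xi.
\]
Combining these with another integration by parts using $\nabla\cdot u=0$,
\[
\langle B(u,u),u\rangle_{\dot H^1}=\int_{\mathbb{T}^2}(u\cdot\nabla\xi)\,\xi=\tfrac12\int_{\mathbb{T}^2}u\cdot\nabla(\xi^2)=0.
\]

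For (\textit{iii}), I would argue by duality. Given $\varphi\in H^1_{\mathbf{div}}$ with $\|\varphi\|_{H^1}\leqslant 1$, integration by parts (using $\nabla\cdot u=0$) and the self-adjointness of $\mathbf P$ give
\[
\langle B(u,v),\varphi\rangle_{L^2}=\langle u\cdot\nabla v,\varphi\rangle_{L^2}=-\int_{\mathbb{T}^2}u\otimes v:\nabla\varphi,
\]
which is bounded by $\|u\|_{L^4}\|v\|_{L^4}\|\nabla\varphi\|_{L^2}$ via Cauchy--Schwarz and Hölder. The 2D Sobolev embedding $H^{1/2}(\mathbb{T}^2)\hookrightarrow L^4(\mathbb{T}^2)$ then yields
\[
|\langle B(u,v),\varphi\rangle_{L^2}|\lesssim\|u\|_{H^{1/2}}\|v\|_{H^{1/2}}\|\varphi\|_{H^1},
\]
and taking the supremum over admissible $\varphi$ gives (\textit{iii}). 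There is no real obstacle here; the only thing to keep track of is the interplay between $\mathbf P$ and the divergence-free constraint, and in each case the projector is absorbed either by self-adjointness against a divergence-free test or by the fact that $B(u,v)$ is itself divergence-free.
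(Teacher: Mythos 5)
Your proof is correct and follows essentially the same route as the paper: integration by parts with $\nabla\cdot u=0$ for (\textit{i}), reduction to the vorticity transport identity $(u\cdot\nabla\xi,\xi)_{L^2}=0$ for (\textit{ii}) (the paper writes the $\dot H^1$ pairing with gradients rather than curls, which coincides with your version for divergence-free fields), and duality combined with $H^{1/2}(\mathbb{T}^2)\hookrightarrow L^4(\mathbb{T}^2)$ for (\textit{iii}). You in fact supply slightly more detail on (\textit{ii}) than the paper, which defers the computation to Kuksin's book.
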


\begin{remark} The property (\textit{ii}) will appear to be a crucial algebraic cancellation which will be heavily used through the rest of the paper, and is very specific to the dimension $2$.
\end{remark}

We will also need a basic heat kernel estimate. 

\begin{lemma}\label{4lemmaHeat} Consider the operator $L=(-\Delta)^{1+\delta}$ for $\delta >0$. Let $s_0 \in \mathbb{R}$, $\alpha \geqslant 0$ and $F \in H^{s_0}(\mathbb{T}^2)$, with zero mean. For any $s< s_0+\alpha (1+\delta)$ and $t>0$ there holds
\begin{equation}
    \left\|e^{-tL}F\right\|_{H^{s}} \lesssim t^{-\frac{\alpha}{2}}\|F\|_{H^{s_0}}\,,
\end{equation}
where the implicit constant only depends on $\alpha$. 
\end{lemma}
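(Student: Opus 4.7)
The plan is to diagonalise $e^{-tL}$ in the Fourier basis and reduce the statement to a scalar multiplier estimate. Writing $F = \sum_{n\neq 0} \hat F(n) e^{in\cdot x}$ (the sum is over $n\neq 0$ because $F$ has zero mean) one has $e^{-tL}F = \sum_{n\neq 0} e^{-t|n|^{2(1+\delta)}} \hat F(n) e^{in\cdot x}$, hence
\[
    \|e^{-tL}F\|_{H^{s}}^{2} = \sum_{n\neq 0} \langle n\rangle^{2s}\, e^{-2t|n|^{2(1+\delta)}}\,|\hat F(n)|^{2}.
\]
It therefore suffices to prove the pointwise bound
\[
    \langle n\rangle^{2(s-s_{0})}\,e^{-2t|n|^{2(1+\delta)}} \les t^{-\alpha},\qquad n\neq 0,\ t>0,
\]
with an implicit constant independent of $n$ and $t$; multiplying by $\langle n\rangle^{2s_{0}}|\hat F(n)|^{2}$ and summing would then yield the lemma. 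Since on $\{|n|\ge 1\}$ the weights $\langle n\rangle$ and $|n|$ are equivalent, one may equivalently bound $|n|^{2(s-s_{0})}e^{-2t|n|^{2(1+\delta)}}$.

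The key trick is the change of variables $y \coloneqq t|n|^{2(1+\delta)}$, which gives
\[
    |n|^{2(s-s_{0})} = \bigl(y/t\bigr)^{\gamma},\qquad \gamma \coloneqq \tfrac{s-s_{0}}{1+\delta}\le \alpha,
\]
so that
\[
    |n|^{2(s-s_{0})}\,e^{-t|n|^{2(1+\delta)}} = t^{-\gamma}\, y^{\gamma}e^{-y} \les t^{-\gamma},
\]
because $y\mapsto y^{\gamma}e^{-y}$ is bounded on $\mathbb{R}_{+}$ (with a constant depending only on $\gamma$, itself bounded by $\alpha$). Using half of the exponential for this estimate and keeping the other half as a reserve yields
\[
    |n|^{2(s-s_{0})}\,e^{-2t|n|^{2(1+\delta)}} \les t^{-\gamma}\,e^{-t|n|^{2(1+\delta)}} \le t^{-\gamma}\,e^{-t},
\]
where in the last step I use $|n|^{2(1+\delta)}\ge 1$ for $n\neq 0$.

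It remains to convert the bound $t^{-\gamma}e^{-t}$ into $t^{-\alpha}$. For $t\le 1$ one simply drops the exponential and uses $\gamma\le\alpha$ to obtain $t^{-\gamma}\le t^{-\alpha}$. For $t\ge 1$ one uses $e^{-t}\les_{\alpha-\gamma} t^{-(\alpha-\gamma)}$ (polynomial decay is dominated by exponential decay) to conclude $t^{-\gamma}e^{-t}\les t^{-\alpha}$. Combining the two regimes yields the desired pointwise multiplier bound and hence the lemma. No step is really an obstacle here; the only thing to watch is the mean-zero assumption, which is essential to ensure $|n|\ge 1$ in the sum so that $L$ is coercive and the substitution above is meaningful.
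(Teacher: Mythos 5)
Your proof is correct and follows essentially the same route as the paper: Fourier diagonalisation followed by the boundedness of $y\mapsto y^{\beta}e^{-y}$ on $\mathbb{R}_+$; the paper simply takes $\beta=\alpha$ directly (multiplying and dividing by $t^{\alpha}|n|^{2\alpha(1+\delta)}$ and absorbing the leftover power of $\langle n\rangle$ using $s\leqslant s_0+\alpha(1+\delta)$ and $|n|\geqslant 1$), which avoids your two-regime discussion in $t$. One small caveat: your boundedness claim for $y^{\gamma}e^{-y}$ requires $\gamma\geqslant 0$, i.e.\ $s\geqslant s_0$, which the lemma does not assume --- but the excluded case is trivial, since for $s<s_0$ one has $|n|^{2(s-s_0)}\leqslant 1$ and $e^{-2t|n|^{2(1+\delta)}}\leqslant e^{-2t}\lesssim_{\alpha} t^{-\alpha}$.
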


\begin{proof} We take the Fourier transform and use the mean zero condition to write
\begin{align*}
    \left\|e^{-tL}F\right\|_{H^{s}}^2 &= \sum_{n \in \mathbb{Z}^2}\langle n \rangle ^{2s} e^{-2t|n|^{2(1+\delta)}}|\hat{F}(n)|^2 \\
    & = \sum_{n \in \mathbb{Z}_0^2} \langle n \rangle ^{2s}\frac{e^{-2t|n|^{2(1+\delta)}}t^{\alpha} |n|^{2\alpha (1+\delta)}}{t^{\alpha} |n|^{2\alpha (1+\delta)}}|\hat{F}(n)|^2\,.
\end{align*}
Then observe that the function $x \mapsto x^{\alpha}e^{-x}$ is bounded so that 
\[
    \left\|e^{-tL}F\right\|_{H^{s}}^2 \leqslant Ct^{-\alpha} \sum _{n \in \mathbb{Z}_0^2} \langle n \rangle ^{2(s-s_0)-2\alpha(1+\delta)} \langle n \rangle ^{2s_0}|\hat{F}(n)|^2 \,,
\]
which is bounded by $t^{-\alpha}\|F\|_{H^{s_0}}^2$ as soon as $s<s_0+\alpha (1+\delta)$. 
\end{proof}

\subsection{Probabilistic preliminaries}

We will use the Itô isometry in the following form:

\begin{theorem}[Itô isometry, \cite{DaPratoZabysck}] Let $F(t)$ be an $\mathcal{G}_t$-adapted process. Then, for any $t>0$ there holds
\begin{equation}
    \label{4eqIto}
    \mathbb{E}\left[ \left(\int_0^t F(t')\,\mathrm{d}\beta(t')\right)^2\right] = \mathbb{E}\left[\|F\|_{L^2(0,t)}^2\right]\,.
\end{equation}
\end{theorem}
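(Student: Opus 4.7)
The plan is to follow the standard two-step proof: first establish the identity for a dense class of \emph{simple} adapted processes via direct computation, then extend by a density/isometry argument to the full space of progressively measurable $L^2$ processes.

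First I would consider a simple process of the form
\[
F(t') = \sum_{i=0}^{N-1} F_i\, \mathbf{1}_{[t_i,t_{i+1})}(t'),
\]
where $0=t_0<t_1<\cdots<t_N=t$ and each $F_i$ is $\mathcal{G}_{t_i}$-measurable with $\mathbb{E}[F_i^2]<\infty$. For such $F$, the stochastic integral is defined directly as
\[
\int_0^t F(t')\,\mathrm{d}\beta(t') = \sum_{i=0}^{N-1} F_i\bigl(\beta(t_{i+1})-\beta(t_i)\bigr).
\]
Squaring and taking expectation produces diagonal terms $\sum_i \mathbb{E}[F_i^2(\beta(t_{i+1})-\beta(t_i))^2]$ and cross terms indexed by $i<j$. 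The key computation is: for the diagonal terms, $F_i$ is $\mathcal{G}_{t_i}$-measurable and hence independent of $\beta(t_{i+1})-\beta(t_i)$, so the expectation factorises and yields $\mathbb{E}[F_i^2](t_{i+1}-t_i)$; for the cross terms with $i<j$, conditioning on $\mathcal{G}_{t_j}$ and using that $\beta(t_{j+1})-\beta(t_j)$ has mean zero and is independent of $\mathcal{G}_{t_j}$, one gets that each cross term vanishes. Summing the diagonal contributions gives exactly $\mathbb{E}\bigl[\int_0^t F(t')^2\,\mathrm{d}t'\bigr]$, which is~\eqref{4eqIto} for simple processes.

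Next I would upgrade to general progressively measurable $F \in L^2(\Omega \times (0,t))$. The standard density lemma provides a sequence of simple adapted processes $F_n$ with $\|F_n - F\|_{L^2(\Omega \times (0,t))} \to 0$; the identity just proved shows that the map $F \mapsto \int_0^t F\,\mathrm{d}\beta$ is an $L^2$-isometry on the simple processes, hence uniformly continuous, and extends uniquely to the $L^2$-closure. Passing to the limit in the isometry identity on both sides gives~\eqref{4eqIto} in full generality.

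The only delicate point is really the density step: one needs to approximate an arbitrary progressively measurable $L^2$ process by simple adapted ones while preserving adaptedness. The standard recipe is to first approximate by bounded processes (truncation), then by time-continuous processes (convolution in time with a one-sided mollifier, which preserves adaptedness), and finally by step-function discretisations on dyadic partitions. Since this is a textbook result and the reference \cite{DaPratoZabysck} is cited, I would not carry out the density argument in detail but simply invoke it.
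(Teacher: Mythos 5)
Your proposal is correct and is the standard proof of the It\^o isometry: the diagonal/cross-term computation for simple adapted processes followed by the density-and-isometric-extension argument is exactly the argument in the cited reference. The paper itself does not prove this statement---it simply quotes it from \cite{DaPratoZabysck}---so there is nothing further to compare; your only (minor) addition is the correct observation that the hypothesis should really be progressive measurability together with $F\in L^2(\Omega\times(0,t))$, which the paper's loose phrasing omits.
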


Moreover we will often use the following facts: 
\begin{enumerate}[label=(\textit{\roman*})]
    \item When $F$ is a deterministic function, then $\displaystyle \int_0^tF(t')\,\mathrm{d}\beta_n(t')$ is a Gaussian random variable of mean $0$ and variance $\|F\|_{L^2((0,t)}^2$, see~\cite{DaPratoZabysck}. 
    \item If $Y$ is a Gaussian random variable on a Hilbert space $H$ (that is, $(Y,a)_H$ is a real Gaussian for any $a \in H$), then one has 
        \begin{equation}
            \label{4eqGaussBound}
            \|Y\|_{L^p_{\omega}} \lesssim \sqrt{p} \|Y\|_{L^2_{\omega}}\,,
        \end{equation}
    for any $p\geqslant 1$. 
\end{enumerate}

We also use the following well-known regularity criterion. 

\begin{lemma}[Kolmogorov,\cite{DaPratoZabysck}]\label{4kolmorogov} Let $(X(t))_{t\geqslant 0}$ be a stochastic process with values in a Banach space endowed with a norm $\| \cdot \|$. Assume that there exist $p \geqslant 1$ and $\alpha >0$ such that \[\mathbb{E}[\|X(t)-X(s)\|^p] \lesssim |t-s|^{1+\alpha}\,.\] Then for any $\varepsilon >0$ small enough, then $(X(t))_{t\geqslant 0}$ admits a modification (\textit{i.e.}, there is $(\tilde{X}(t))_{t\geqslant 0}$ such that for all $t \geqslant 0$, $\tilde X (t)=X(t)$ almost surely) that is almost-surely $(\frac{\alpha}{p}-\varepsilon)$ Hölder continuous. 
\end{lemma}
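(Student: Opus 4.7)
The plan is to prove the classical Kolmogorov continuity criterion by the standard dyadic chaining argument. I will first reduce to a bounded time interval, say $t \in [0,1]$, since Hölder continuity is a local property and the general case follows by covering $\mathbb{R}_+$ with countably many such intervals and taking the intersection of full-measure events.

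First I would apply the Markov inequality to the moment assumption. For dyadic times $t_k^n = k 2^{-n}$ with $0 \le k \le 2^n$, Markov combined with the hypothesis gives, for any $\gamma > 0$,
\[
\mathbb{P}\bigl(\|X(t_{k+1}^n) - X(t_k^n)\| > 2^{-\gamma n}\bigr) \le 2^{p \gamma n} \, \mathbb{E}\bigl[\|X(t_{k+1}^n) - X(t_k^n)\|^p\bigr] \lesssim 2^{p\gamma n} 2^{-(1+\alpha)n}.
\]
Summing over the $2^n$ pairs at level $n$ yields a bound of order $2^{(1 + p\gamma - (1+\alpha))n} = 2^{(p\gamma - \alpha)n}$. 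Choosing $\gamma < \alpha/p$, this is summable in $n$, so Borel--Cantelli guarantees that on a set $\Omega_0$ of full probability there exists a (random) integer $n_0(\omega)$ such that for every $n \ge n_0(\omega)$ and every $0 \le k < 2^n$ one has $\|X(t_{k+1}^n) - X(t_k^n)\| \le 2^{-\gamma n}$.

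Next, a standard chaining argument applied on $\Omega_0$ shows that for any two dyadic rationals $s < t$ in $[0,1]$ with $2^{-(n+1)} < t-s \le 2^{-n}$ and $n \ge n_0(\omega)$, decomposing the interval $[s,t]$ along the dyadic scale gives
\[
\|X(t) - X(s)\| \le 2 \sum_{m \ge n} 2^{-\gamma m} \lesssim 2^{-\gamma n} \lesssim |t-s|^{\gamma}.
\]
Thus on $\Omega_0$ the restriction of $X$ to the dyadic rationals of $[0,1]$ is uniformly $\gamma$-Hölder continuous, hence extends uniquely to a $\gamma$-Hölder continuous function $\tilde X : [0,1] \to E$. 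Off $\Omega_0$, set $\tilde X \equiv 0$. Taking $\gamma = \alpha/p - \varepsilon$ yields the desired regularity.

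Finally, I would check the modification property $\mathbb{P}(\tilde X(t) = X(t)) = 1$ for every fixed $t$. The hypothesis implies that $X$ is continuous in $L^p(\Omega)$: for any sequence of dyadics $t_k \to t$ one has $\mathbb{E}[\|X(t_k) - X(t)\|^p] \to 0$, so up to a subsequence $X(t_k) \to X(t)$ almost surely. On $\Omega_0$ we also have $X(t_k) \to \tilde X(t)$ by construction, so $\tilde X(t) = X(t)$ almost surely. The main subtlety in the argument is ensuring the indistinguishability of $\tilde X$ from $X$ at all points $t$ simultaneously for the modification claim; this is handled precisely as above by using $L^p$-continuity and extracting almost-sure convergent subsequences pointwise.
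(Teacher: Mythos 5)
Your proof is correct: the paper does not prove this lemma at all (it is quoted as a classical result with a reference to Da Prato--Zabczyk), and what you have written is the standard dyadic Borel--Cantelli plus chaining argument, with the exponent bookkeeping ($\sum_n 2^{(p\gamma-\alpha)n}<\infty$ for $\gamma<\alpha/p$) done correctly and the modification property verified via $L^p$-continuity along dyadic sequences. One small remark: your closing sentence about needing ``indistinguishability of $\tilde X$ from $X$ at all points $t$ simultaneously'' overstates what a modification requires --- one only needs $\mathbb{P}(\tilde X(t)=X(t))=1$ separately for each fixed $t$, which is exactly what your subsequence argument delivers; indistinguishability would in general be false since $X$ itself need not have continuous paths. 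You also gloss over the chaining for pairs with $|t-s|>2^{-n_0(\omega)}$, which is handled by absorbing finitely many increments into a random Hölder constant; this is standard but worth a sentence in a full write-up.
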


\section{Constructions of solutions to approximate equations}\label{4sec3}

In this section, we study the well-posedness of the following hyper-viscous equation:
\begin{equation}
    \tag{HVE${}_{\nu}^2$}
    \label{4eqNSv2}
    \left\{
    \begin{array}{ccc}
        \partial _t u_{\nu} + \nu Lu_{\nu} +B(u_{\nu},u_{\nu}) & = & \sqrt{\nu} \eta \\
       u_{\nu}(0) & = & u_0 \in H^1_{\operatorname{div}}(\mathbb{T}^2)\,,
    \end{array}
    \right.
\end{equation}
where we recall that $L=(-\Delta)^{1+\delta}$ for some $\delta >0$. $\eta = \frac{\mathrm{d}}{\mathrm{d}t} \zeta(t)$ where $\zeta(t)$ is defined by~\eqref{4refWhite} and satisfies $\mathcal{B}_{3+\delta}<\infty$.  

We let $(\Omega, \mathbb{P}, \mathcal{F})$ be a probability space with the Brownian motion completed filtration $(\mathcal{G}_t)_{t \geqslant 0}$. 

\begin{proposition}[Solutions to~\eqref{4eqNSv2}]
\label{4propNSsol}
Let $\nu >0$ and $u_0$ a $\mathcal{G}_0$-measurable random variable such that $u_0 \in H^1_{\operatorname{div}}$ almost surely. We also assume that the noise~$\eta$ is such that $\mathcal{B}_{3+2\delta}<\infty$. Then there exists a set $\Omega_1$ such that $\mathbb{P}(\Omega _1)=1$ and such that for any $\omega \in \Omega_1$:
\begin{enumerate}[label=(\textit{\roman*})]
    \item \eqref{4eqNSv2} has a unique global solution $t \mapsto u_{\nu}^{\omega}(t)$ associated to $u_0^{\omega}$, in the space $\mathcal{C}^0(\mathbb{R}_+,H^1_{\operatorname{div}})$.
    \item Moreover $u_{\nu}^{\omega} \in L^2_{\operatorname{loc}}(\mathbb{R}_+,H^{2+\delta})$.
    \item $u_{\nu}$ may be written in the form 
    \[u_{\nu}(t)=u_0+\int_0^t f(s)\,\mathrm{d}s + \sqrt{\nu}\zeta (t)\,,\]
    where equality holds in $H^{-\delta}$ and where $f(s)$ is a $\mathcal{G}_s$-progressively measurable process satisfying that almost surely $f \in L^2_{\operatorname{loc}}(\mathbb{R}_+, H^{-\delta})$. In particular $u(t)$ is a $\mathcal{F}_t$-progressively measurable process.
    \item There exists a measurable map $U : H^{1} \times \mathcal{C}^0(\mathbb{R}_+,H^1) \to \mathcal{C}^0(\mathbb{R}_+,H^1)$, continuous in its first variable, such that for any $\omega \in \Omega_1$, $u^{\omega}_{\nu}=U(u_0,\zeta)$.
\end{enumerate}
\end{proposition}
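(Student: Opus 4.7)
The plan is to reduce the stochastic equation~\eqref{4eqNSv2} to a pathwise deterministic problem via the splitting $u_{\nu} = v + \sqrt{\nu}\,z$, where $z$ is the stochastic convolution
$$z(t) = \int_0^t e^{-\nu(t-s)L}\,\mathrm{d}\zeta(s),$$
solving $\mathrm{d}z + \nu Lz\,\mathrm{d}t = \mathrm{d}\zeta$ with $z(0)=0$. The remainder $v$ then satisfies the random but pathwise-deterministic PDE
$$\partial_t v + \nu Lv + B(v+\sqrt{\nu}\,z,\, v+\sqrt{\nu}\,z) = 0, \qquad v(0) = u_0.$$
First I would establish pathwise regularity of $z$: an Itô-isometry computation combined with Lemma~\ref{4lemmaHeat} yields $\mathbb{E}\|z(t)\|_{H^{\sigma}}^{2}\lesssim_{\nu,t}\mathcal{B}_{\sigma-(1+\delta)}$ for $\sigma$ up to $2+\delta$, which is finite under the assumption $\mathcal{B}_{3+2\delta}<\infty$; upgrading to $L^p_\Omega$ via the Gaussian estimate~\eqref{4eqGaussBound} and applying Lemma~\ref{4kolmorogov} to the increments $z(t)-z(s)$ (whose $L^2_\Omega$ norm is controlled by the same semigroup argument) produces a modification of $z$ lying almost surely in $\mathcal{C}^0_{\mathrm{loc}}(\mathbb{R}_+, H^{1+\delta}) \cap L^2_{\mathrm{loc}}(\mathbb{R}_+, H^{2+\delta})$. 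This defines the full-measure set $\Omega_1$, and the progressive measurability of $z$ with respect to $(\mathcal{G}_t)$ is automatic from its definition as an Itô integral.

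Next, for each $\omega \in \Omega_1$ I would solve the equation for $v$ by a Galerkin scheme on $V_N = \operatorname{Span}\{e_n : |n| \le N\}$, for which finite-dimensional ODE theory produces local-in-time smooth approximations. Global-in-time a priori bounds exploit the two-dimensional cancellations of Lemma~\ref{4lemmaBilinear}: testing with $v$ in $L^2$ uses $\langle B(v,v),v\rangle_{L^2}=0$, and the mixed terms $B(v,z),B(z,v),B(z,z)$ are controlled by~(\textit{iii}) and absorbed into the dissipation $\nu\|v\|_{H^{1+\delta}}^2$ via Young's inequality; testing in $H^1$ exploits the critical cancellation $\langle B(v,v),v\rangle_{H^1}=0$, yielding a uniform-in-$N$ estimate of $\|v\|_{L^\infty_T H^1}+\nu^{1/2}\|v\|_{L^2_T H^{2+\delta}}$ depending only on $\|u_0\|_{H^1}$ and the pathwise norms of $z$ already established. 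Aubin-Lions compactness then furnishes a limit $v\in \mathcal{C}^0(\mathbb{R}_+, H^1)\cap L^2_{\mathrm{loc}}(\mathbb{R}_+, H^{2+\delta})$, and pathwise uniqueness follows by applying Lemma~\ref{4lemmaBilinear}~(\textit{iii}) to the difference of two solutions together with a Grönwall argument. Statement~(\textit{iii}) is then read off from the integral form of~\eqref{4eqNSv2} with $f = -\nu Lu_\nu - B(u_\nu,u_\nu)$, which lies in $L^2_{\mathrm{loc}}(\mathbb{R}_+, H^{-\delta})$ by~(\textit{ii}) and Lemma~\ref{4lemmaBilinear}~(\textit{iii}); (\textit{iv}) follows because the Galerkin approximations $u_{\nu,N}$ are measurable functions of $(u_0,\zeta)$ and Grönwall stability at the $H^1$ level gives continuity in $u_0$, properties preserved under the pathwise limit.

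The main obstacle is the careful calibration of regularity parameters so that every estimate closes: the $H^1$ cancellation of Lemma~\ref{4lemmaBilinear}~(\textit{ii}) requires $v \in H^{2+\delta}$, which is exactly the regularity afforded by the dissipation $L=(-\Delta)^{1+\delta}$, while the pathwise norms of $z$ entering the estimates for the mixed bilinear terms must remain finite under the stated hypothesis $\mathcal{B}_{3+2\delta}<\infty$. Verifying that no stronger summability is required of the noise, and that the hyper-viscous dissipation is strong enough to absorb all the cross contributions $B(v,z)$, $B(z,v)$, $B(z,z)$ coming from the splitting, is the delicate bookkeeping on which the whole argument rests.
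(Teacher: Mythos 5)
Your overall architecture is the paper's: the same splitting $u_\nu=v+z$ into a stochastic convolution plus a pathwise-deterministic remainder, the same Itô-isometry/Gaussian/Kolmogorov analysis of $z$, and the same reliance on the two-dimensional cancellations to globalise $v$ at the $H^1$ level. Your only structural deviation is to build $v$ by Galerkin approximation and Aubin--Lions compactness rather than by the paper's Duhamel fixed point; that is a legitimate alternative (the $\dot H^1$ cancellation survives the truncation because $\mathbf{P}_{\le N}$ is self-adjoint and commutes with $-\Delta$), though you then owe a separate uniqueness and continuous-dependence argument for part~(\textit{iv}), which the contraction mapping delivers for free.

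The genuine gap is in the $H^1$ energy estimate, precisely at the point you yourself identify as the crux. Your stated mechanism for the cross terms --- ``controlled by Lemma~\ref{4lemmaBilinear}~(\textit{iii}) and absorbed into the dissipation via Young'' --- closes at the $L^2$ level but not at the $H^1$ level. Testing with $-\Delta v$, the bound $\|B(z,v)\|_{H^{-1}}\lesssim\|z\|_{H^{1/2}}\|v\|_{H^{1/2}}$ forces you to pay $\|v\|_{H^3}$ on the other side of the duality pairing, and $\nu\|v\|_{\dot H^{2+\delta}}^2$ cannot absorb $\|v\|_{H^3}$ unless $\delta\ge 1$; the proposition is claimed for every $\delta>0$. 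The term $\langle B(z,v),v\rangle_{\dot H^1}$ must instead be handled structurally: writing $\nabla(z\cdot\nabla v)=(z\cdot\nabla)\nabla v+\nabla z\cdot\nabla v$, the leading part pairs to zero against $\nabla v$ by incompressibility of $z$, leaving $\|\nabla z\|_{L^\infty}\|v\|_{\dot H^1}^2$, which is then closed by Gr\"onwall using $z\in L^1_{\operatorname{loc}}(W^{1,\infty})$ (available from your $L^2_{\operatorname{loc}}H^{2+\delta}$ bound on $z$, or from the paper's $\mathcal{C}^\alpha_{\operatorname{loc}}W^{2,4}$ estimate); the terms $B(v,z)$ and $B(z,z)$ are treated by product laws in $H^1$, not by the $H^{-1}$ estimate. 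Without this cancellation your a priori bound does not close for small $\delta$, and with it the absorption-into-dissipation step becomes unnecessary. Everything else in your outline --- the regularity of $z$ under $\mathcal{B}_{3+2\delta}<\infty$, the $L^2$-level uniqueness via Gr\"onwall, the identification of $f$ in part~(\textit{iii}), and the measurability in part~(\textit{iv}) --- is sound and matches the paper.
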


\begin{remark} The reader already aware of the methods used in~\cite{Kuksin2004} can skip the proof provided in this section, as the proof follows the same lines. 
\end{remark}

A general strategy for solving a nonlinear stochastic partial differential equation is to seek for solutions which have a particular structure, designed to eliminate the randomness and apply a fixed-point argument. 

We explain the general scheme for proving Proposition~\ref{4propNSsol}. In order to solve~\eqref{4eqNSv2} we first look for solutions $z_{\nu}$ to the following equation:
\begin{equation}
    \label{4eqSto}
    \left\{
    \begin{array}{ccc}
         \partial_t z_{\nu}(t) + \nu Lz_{\nu} (t) &=& \sqrt{\nu} \eta (t)  \\
         z_{\nu}(0) &=& 0\,.
    \end{array}
    \right.
\end{equation}
Then we seek for solutions to~\eqref{4eqNSv2} taking the form $u_{\nu}=z_{\nu}+v_{\nu}$ where $v_{\nu}$ formally satisfies:
\begin{equation}
    \label{4eqNSv}
    \left\{
    \begin{array}{ccc}
         \partial_t v_{\nu} + \nu Lv_{\nu} +B(z_{\nu}+v_{\nu},z_{\nu}+v_{\nu})&=&0\\
         v_{\nu}(0)&=&u_0\,, 
    \end{array}
    \right.
\end{equation}
This is a deterministic equation and can be solved in the space $\mathcal{C}^0(\mathbb{R}_+,H^1_{\operatorname{div}})$. 

A solution to~\eqref{4eqSto} is explicitly given by
\begin{equation}
    \label{4eqZnu}
    z_{\nu}(t)=\sqrt{\nu}\sum_{n \in \mathbb{Z}_0^2} \phi_n \left(\int_0^t e^{-\nu (t-t')L}\mathrm{d}\beta _n (t')\right)e_n \in H^1_{\operatorname{div}}(\mathbb{T}^2,\mathbb{R}^2)\,,
\end{equation}

as this can be checked by the Itô formula for example, see Appendix~\ref{4appA}. Moreover, the process $z_{\nu}$ satisfies the following properties. 

\begin{lemma}\label{4lemmaSto} Under the assumptions of Proposition~\ref{4propNSsol}, there exists a set $\Omega_1$ such that $\mathbb{P}(\Omega_1)=1$ and which satisfies the following properties for any $\omega \in \Omega_1$. Let us denote by $z$ the solution $z_{\nu}$ in~\eqref{4eqZnu}. Then:
\begin{enumerate}[label=(\roman*)]
    \item $z \in \mathcal{C}^0(\mathbb{R}_+,H_{\operatorname{div}}^1) \cap L^2_{\operatorname{loc}}(\mathbb{R}_+,H^{2+\delta})$. 
    \item There exists $\alpha >0$ such that $z$ almost surely belongs to the space $\mathcal{C}^{\alpha}_{\operatorname{loc}}(\mathbb{R}_+,W^{2,4})$.
    \item $\mathbb{E}[\|z(t)\|^2_{\dot H^1}] \leqslant \frac{\mathcal{B}_1}{2}\nu t$, for any $t\geqslant 0$.  
\end{enumerate}
\end{lemma}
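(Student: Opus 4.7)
The core idea is that since $z$ is explicitly given by the Gaussian stochastic convolution formula~\eqref{4eqZnu}, all the required estimates follow from (a) a direct mode-by-mode application of It\^o isometry, and (b) the hypercontractivity bound~\eqref{4eqGaussBound} coupled with Kolmogorov's criterion (Lemma~\ref{4kolmorogov}). I will first establish (\textit{iii}), then deduce the integrability statements in (\textit{i}), and finally handle the continuity assertions in (\textit{i}) and (\textit{ii}) together by Kolmogorov.

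\textbf{Step 1 (part (\textit{iii})).} By~\eqref{4eqZnu}, $\langle z_\nu(t), e_n\rangle_{L^2}$ is a Wiener integral of the deterministic function $t'\mapsto \sqrt\nu\,\phi_n e^{-\nu(t-t')|n|^{2(1+\delta)}}$. By the It\^o isometry~\eqref{4eqIto} applied mode-by-mode,
\[
\mathbb{E}\bigl[|\langle z_\nu(t),e_n\rangle|^2\bigr] = \nu|\phi_n|^2 \int_0^t e^{-2\nu(t-t')|n|^{2(1+\delta)}}\,\d t' = |\phi_n|^2\,\frac{1-e^{-2\nu t|n|^{2(1+\delta)}}}{2|n|^{2(1+\delta)}}.
\]
Multiplying by $|n|^2$, summing over $n\in\mathbb{Z}^2_0$, and using $1-e^{-x}\le x$ yields
$\mathbb{E}[\|z_\nu(t)\|_{\dot H^1}^2]\le \tfrac{1}{2}\sum_n|\phi_n|^2|n|^2\,\nu t = \tfrac{\mathcal{B}_1}{2}\nu t$, which is~(\textit{iii}).

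\textbf{Step 2 (integrability in $L^2_{\mathrm{loc}}H^{2+\delta}$).} Repeating the computation at regularity $H^{2+\delta}$, one uses instead the bound $1-e^{-x}\le 1$ to get
\[
\mathbb{E}\bigl[\|z_\nu(t)\|_{H^{2+\delta}}^2\bigr] \lesssim \sum_{n\in\mathbb Z_0^2} \langle n\rangle^{2(2+\delta)} \,\frac{|\phi_n|^2}{|n|^{2(1+\delta)}}
\lesssim \sum_n |n|^{2}|\phi_n|^2 = \mathcal{B}_1,
\]
which is finite under the hypothesis $\mathcal{B}_{3+2\delta}<\infty$. By Fubini, $\mathbb{E}\|z_\nu\|_{L^2_TH^{2+\delta}}^2\lesssim T\,\mathcal{B}_1$, hence almost surely $z_\nu\in L^2_{\mathrm{loc}}(\mathbb{R}_+,H^{2+\delta})$.

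\textbf{Step 3 (Hölder continuity, parts (\textit{i}) and (\textit{ii})).} Fix $\sigma\in(5/2,\,4+3\delta)$ so that by Sobolev embedding $H^\sigma(\mathbb{T}^2)\hookrightarrow W^{2,4}(\mathbb{T}^2)$; one such $\sigma$ exists since $\delta>0$. For $0\le s<t$, split the Fourier coefficient as
\[
\langle z_\nu(t)-z_\nu(s),e_n\rangle = \sqrt\nu\phi_n\!\!\int_s^t e^{-\nu(t-t')|n|^{2(1+\delta)}}\d\beta_n(t') + \bigl(e^{-\nu(t-s)|n|^{2(1+\delta)}}-1\bigr)\langle z_\nu(s),e_n\rangle.
\]
Applying It\^o isometry to each of the two independent pieces and interpolating the elementary bounds $1-e^{-x}\le x^{\theta}$ (any $\theta\in[0,1]$), one obtains for an appropriate $\theta\in(0,1)$ depending only on $\sigma$ and $\delta$,
\[
\mathbb{E}\bigl[\|z_\nu(t)-z_\nu(s)\|_{H^\sigma}^2\bigr]\lesssim_{\nu,\sigma}\, |t-s|^{\theta}\;\sum_{n}\langle n\rangle^{2\sigma-2(1+\delta)(1-\theta)}|\phi_n|^2.
\]
Choosing $\theta$ small enough so that $2\sigma - 2(1+\delta)(1-\theta)\le 2(3+2\delta)$, the sum is controlled by $\mathcal{B}_{3+2\delta}<\infty$. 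Since $z_\nu(t)-z_\nu(s)$ is a Gaussian random variable in the Hilbert space $H^\sigma$, the bound~\eqref{4eqGaussBound} upgrades this $L^2_\Omega$ estimate to
\[
\mathbb{E}\bigl[\|z_\nu(t)-z_\nu(s)\|_{H^\sigma}^p\bigr]\lesssim_p |t-s|^{\theta p/2}
\]
for every $p\ge 2$. Picking $p$ large enough that $\theta p/2>1$ and invoking Lemma~\ref{4kolmorogov} gives an almost-sure Hölder modification of $z_\nu$ with values in $H^\sigma$, hence in $W^{2,4}$ via the embedding. This yields (\textit{ii}), and in particular the continuity statement in (\textit{i}) (which only requires $H^1$ and follows \emph{a fortiori}).

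\textbf{Main obstacle.} The only delicate point is the trade-off in Step~3: one must pick the Hölder exponent $\theta$ in $1-e^{-x}\le x^\theta$ small enough that the spatial sum is controlled by $\mathcal{B}_{3+2\delta}$, yet large enough (after raising to the $p$-th moment) that Kolmogorov's criterion applies. The assumption $\mathcal{B}_{3+2\delta}<\infty$ is precisely calibrated so this is feasible for any $\sigma<4+3\delta$, which comfortably exceeds the threshold $5/2$ needed for $W^{2,4}$ regularity in dimension two (and analogously the threshold $9/4$ needed in dimension three, where $H^{3/4}\hookrightarrow L^4$ plays the role of $H^{1/2}\hookrightarrow L^4$).
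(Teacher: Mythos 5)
Your argument is correct and runs on the same engine as the paper's proof --- the explicit Gaussian stochastic convolution, the It\^o isometry, the hypercontractive bound~\eqref{4eqGaussBound} and Kolmogorov's criterion --- but it is organised differently at three points, each legitimate. For the time increments you use the semigroup splitting $z(t)-z(s)=\int_s^t e^{-\nu(t-t')L}\,\mathrm{d}\beta+(e^{-\nu(t-s)L}-\mathrm{Id})z(s)$ together with $1-e^{-x}\le x^\theta$, where the paper instead expands $|a_n(t_1)-a_n(t_2)|^2$ directly; both yield the same trade-off between the H\"older exponent and the weight $\mathcal{B}_{s+\alpha(1+\delta)}$. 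For the $W^{2,4}$ regularity you route through $H^\sigma\hookrightarrow W^{2,4}$ with $\sigma>5/2$, which spends more spatial regularity than the paper's route (pointwise-in-$x$ Gaussian moments plus Minkowski in $L^4_x$, which only needs two derivatives), but your budget is comfortably covered by $\mathcal{B}_{3+2\delta}<\infty$, so nothing is lost. For (\textit{iii}) you compute the variance explicitly instead of invoking the It\^o formula as the paper does; this is more elementary, but note that $1-e^{-2\nu t|n|^{2(1+\delta)}}\le 2\nu t|n|^{2(1+\delta)}$ gives $\mathbb{E}[\|z_\nu(t)\|_{\dot H^1}^2]\le\mathcal{B}_1\nu t$ and \emph{not} $\tfrac{\mathcal{B}_1}{2}\nu t$ as you assert --- the factor $\tfrac12$ does not follow from the inequality you quote (the small-$t$ expansion of the exact variance shows $\mathcal{B}_1\nu t$ is the sharp leading order, so the $\tfrac12$ in the lemma's statement is itself questionable; this is harmless since only the linear-in-$t$ bound is used downstream). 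Two routine points you leave implicit should be stated: the divergence-free property, read off from the formula since each $e_n$ is divergence-free, and the assembly of a single full-measure set $\Omega_1$ by intersecting over $T\in\mathbb{N}$ the events on which the relevant norms are finite, after replacing $z_\nu$ by its H\"older-continuous modification.
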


This lemma can be proven using the techniques in~\cite{kuksin}, Chapter~2; Section~4, as well as the following lemma, allowing to solve~\eqref{4eqNSv} globally. 

\begin{lemma}\label{4propLocalGlobal} Let $u_0$ a random variable such that $u_0 \in H^1_{\operatorname{div}}$ almost surely. Then there exists a set $\Omega_2$ of probability $1$ such that for any $\omega \in \Omega_2$, the associate Cauchy problem to~\eqref{4eqNSv} is globally well-posed in $\mathcal{C}(\mathbb{R}_+,H^1_{\operatorname{div}}(\mathbb{T}^2))$. Furthermore:
\begin{enumerate}[label=(\textit{\roman*})]
    \item For any $T>0$, the flow map $H^1 \to \mathcal{C}^0([0,T],H^1_{\operatorname{div}})$  defined by $u_0 \mapsto v_{\nu}$ is locally Lipschitz.
    \item $v_{\nu} \in L^2_{\operatorname{loc}}(\mathbb{R}_+,H^{2+\delta})$. 
\end{enumerate}
\end{lemma}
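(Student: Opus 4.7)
\emph{Plan.} Equation~\eqref{4eqNSv} is a deterministic parabolic problem in which $z = z_{\nu}$ enters as a given function, with the regularity produced by Lemma~\ref{4lemmaSto}. I would proceed in three steps: local Cauchy theory by a Duhamel fixed point, an \emph{a priori} $H^1$ bound via the two-dimensional vorticity cancellation, and globalisation.

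\emph{Local theory.} Recast \eqref{4eqNSv} as
\[
v(t) = e^{-\nu t L}u_0 - \int_0^t e^{-\nu (t-s) L} B\bigl(z+v,z+v\bigr)(s)\,\mathrm{d}s,
\]
and look for a fixed point in $\mathcal{C}^0([0,T],H^1_{\operatorname{div}})$. Lemma~\ref{4lemmaBilinear}(\textit{iii}) gives $\|B(z+v,z+v)\|_{H^{-1}} \lesssim (\|z\|_{H^1}+\|v\|_{H^1})^2$, while Lemma~\ref{4lemmaHeat} applied with $s=1$, $s_0=-1$ and $\alpha = 2/(1+\delta)$ yields the smoothing estimate $\|e^{-\nu t L}\|_{H^{-1}\to H^1} \lesssim (\nu t)^{-1/(1+\delta)}$, whose exponent is strictly less than $1$ and therefore integrable near $0$. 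A standard Picard contraction on a ball of $\mathcal{C}^0([0,T],H^1_{\operatorname{div}})$ then produces a unique local solution $v$ and a local Lipschitz dependence of $v$ on $u_0$, with existence time depending only on $\|u_0\|_{H^1}$ and $\|z\|_{L^\infty_T H^1}$.

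\emph{A priori $H^1$ bound.} Taking the curl of \eqref{4eqNSv} and using that $\nabla \wedge \mathbf{P} = \nabla \wedge$ and $\nabla \cdot (z+v) = 0$, the vorticity $\xi := \nabla \wedge v$ solves
\[
\partial_t \xi + \nu L\xi + (z+v)\cdot \nabla(\omega + \xi) = 0, \qquad \omega := \nabla\wedge z.
\]
Pairing with $\xi$ in $L^2$ and integrating by parts kills the term $(z+v)\cdot\nabla\xi\cdot\xi$ (this is the two-dimensional algebraic cancellation underlying Lemma~\ref{4lemmaBilinear}(\textit{ii})), leaving
\[
\tfrac{1}{2}\tfrac{\mathrm{d}}{\mathrm{d}t}\|\xi\|_{L^2}^2 + \nu\|\xi\|_{H^{1+\delta}}^2 = -\int_{\mathbb{T}^2} (z+v)\cdot\nabla\omega\cdot\xi\,\mathrm{d}x.
\]
The right-hand side is bounded by $\|z+v\|_{L^4}\|\nabla\omega\|_{L^4}\|\xi\|_{L^2}$, and the Biot--Savart identification gives $\|v\|_{L^4}\lesssim \|v\|_{H^1}\lesssim \|\xi\|_{L^2}$ for zero-mean $v$. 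Since Lemma~\ref{4lemmaSto} ensures $z\in \mathcal{C}^{\alpha}_{\operatorname{loc}}(\mathbb{R}_+,W^{2,4})$, and in particular $\nabla \omega \in L^\infty_{\operatorname{loc}}(\mathbb{R}_+,L^4)$, Grönwall's lemma furnishes an a priori bound $\|\xi(t)\|_{L^2} \leqslant C(t,z,u_0)$ on every bounded interval, hence a global bound on $\|v(t)\|_{H^1}$.

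\emph{Globalisation and regularity gain.} The blow-up alternative attached to the contraction of the first step, combined with the a priori bound of the second, yields a global solution in $\mathcal{C}^0(\mathbb{R}_+,H^1_{\operatorname{div}})$; the uniformity of this bound over bounded sets of initial data propagates the local Lipschitz estimate to any $[0,T]$, giving (\textit{i}). Integrating the energy identity in time yields $\nu\int_0^T\|\xi\|_{H^{1+\delta}}^2\,\mathrm{d}t < \infty$, and since Biot--Savart is a Fourier multiplier of order $-1$ one has $\|v\|_{H^{2+\delta}}\sim \|\xi\|_{H^{1+\delta}}$, from which (\textit{ii}) follows. The main obstacle is the a priori estimate: it rests crucially on the two-dimensional vorticity cancellation --- which has no counterpart in dimension $3$, whence the rather different statement and strategy of Proposition~\ref{4propNSsol3} --- together with the $W^{2,4}$ regularity of the stochastic convolution $z_{\nu}$ granted by Lemma~\ref{4lemmaSto}, without which the forcing term $\int(z+v)\cdot\nabla\omega\cdot\xi$ could not be closed sublinearly in $\|\xi\|_{L^2}$.
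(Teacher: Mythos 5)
Your proof is correct and follows essentially the same route as the paper: a Duhamel fixed point in $\mathcal{C}^0([0,T],H^1_{\operatorname{div}})$ using the heat smoothing of Lemma~\ref{4lemmaHeat} and the $H^{-1}$ bilinear bound, followed by an a priori $H^1$ bound exploiting the two-dimensional cancellation and the $W^{2,4}$ regularity of $z_{\nu}$, then the blow-up alternative. Writing the energy estimate on the vorticity $\xi=\nabla\wedge v$ rather than directly on $\|v\|_{\dot H^1}$ is only a cosmetic repackaging of the paper's computation (the paper itself identifies $\langle \nabla(u\cdot\nabla u),\nabla u\rangle_{L^2}=\langle u\cdot\nabla\xi,\xi\rangle_{L^2}$), so no substantive difference arises.
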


\begin{proof}[Proof of Proposition~\ref{4propNSsol}] Part (\textit{i}) follows from Lemma~\ref{4lemmaSto} and Lemma~\ref{4propLocalGlobal} and part (\textit{ii}) follows from Lemma~\ref{4propLocalGlobal}. 

We prove (\textit{iv}). In fact, Lemma~\ref{4propLocalGlobal} proves that $v_{\nu}$ is a continuous function of $z_{\nu}$ and $u_0$ as a corollary of the local well-posedness theory, furthermore, there exists a measurable function $Z : \mathcal{C}^0(\mathbb{R}_+,H^1) \to \mathcal{C}^0(\mathbb{R}_+,H^1)$ such that $z_{\nu}=Z(\zeta)$, where $Z(\zeta)$ is given by~\eqref{4eqZnu}. The measurability comes from the fact that the maps $Z_M$ defined by 
\[
    Z_M(\zeta)(t)=\sqrt{\nu}\sum_{|n|\leqslant M} \phi_n \left(\int_0^te^{-\nu (t-t')L}\,\mathrm{d}\beta_n(t')\right)e_n\,,
\]
are continuous, hence measurable; and that $Z=\displaystyle\lim_{M \to \infty} Z_M$ almost surely. For details, see~\cite{kuksin}, Remark~2.4.3. 

In order to prove (\textit{iii}) we write $u_{\nu}(t)=u_0+\int_0^t f(s)\,\mathrm{d}s + \sqrt{\nu}\zeta (t)$ where $f(s)=-\nu Lu_{\nu}(s)+B(u_{\nu}(s),u_{\nu}(s))$ and $\displaystyle\zeta (t):=\sum_{n \in \mathbb{Z}^2}\phi_n\beta_n(t)e_n$. Then $f(s)$ is $\mathcal{G}_s$ progressively measurable. Indeed, we know that $s \mapsto z_{\nu}(s)$ is progressively measurable by construction and properties of the stochastic integral, and we also know that $s \mapsto v_{\nu}(s)$ is progressively measurable thanks to (\textit{iv}). Finally we explain why $f\in L^2_{\operatorname{loc}} H^{-\delta}$. First, since $u_{\nu}$ lies in $L^2_{\operatorname{loc}} H^{2+\delta}$ thanks to Lemma~\ref{4propLocalGlobal} we obtain $Lu_{\nu} \in  L^2_{\operatorname{loc}} H^{-\delta}$. For the bilinear term we use Lemma~\ref{4lemmaBilinear} to bound 
\[
    \|B(u_{\nu},u_{\nu})\|_{H^{-\delta}} \leqslant \|B(u_{\nu},u_{\nu})\|_{L^2}  \leqslant \|u_{\nu}\|_{H^1}^2\,.
\]
Since $u_{\nu} \in \mathcal{C}^0(\mathbb{R},H^1_{\operatorname{div}})$, it follows that $B(u_{\nu},u_{\nu}) \in L^2_{\operatorname{loc}}(\mathbb{R}_+,H^{-\delta})$ and thus the result. 
\end{proof}

\section{Construction of invariant measures}\label{4sec4}

We study the invariance properties of the solutions constructed in Section~\ref{4sec3} by Proposition~\ref{4propNSsol}. In order to do so, we remark that the processes constructed by Proposition~\ref{4propNSsol} enjoy a Markovian structure. We explain how: enlarge the probability set defining $\tilde{\Omega}:=H^1_{\operatorname{div}} \times \Omega$ endowed with the $\sigma$-algebra $\mathcal{B}(H^1)\otimes \mathcal{F}$ and the filtration $\tilde{\mathcal{F}}_t:=\mathcal{B}(H^1)\otimes \mathcal{G}_t$ and denote $\tilde{\omega}=(v,\omega)$ for elements in $\tilde{\Omega}$. We let $\tilde{u}^{\tilde{\omega}}(t):=u_{\nu}^{\omega}(t,v)$, standing for the solution constructed by Proposition~\ref{4propNSsol} with initial data $u_0=v \in H^1$. Let $\mathbb{P}_v:=\delta_v \otimes \mathbb{P}$. Then $(u_{\nu}(t),\mathbb{P}_v)$ forms a Markovian system, as it satisfies the \textit{Markov property}:
\[\mathbb{P}_v(u_{\nu}(t+s) \in \Gamma \vert \mathcal{F}_s)=\mathbb{P}_{u_{\nu}(s)}(u_{\nu}(t)\in \Gamma)\,,\]
which comes from Proposition~\ref{4propNSsol}, (\textit{iv}). Let us remark that if $U_t$ denotes the restriction of the map $U$ of Proposition~\ref{4propNSsol}, then $\mathcal{L}(u_{\nu}(t))=(U_t)_{*}(\delta _v \otimes m_{\zeta, T})$, where $m_{\zeta , T} = \mathcal{L}(\zeta _{\vert [0,T]})$. In particular we have $\mathcal{L}(u_0)=(U_0)_*\mathcal{L}(u_{\nu})$. 

\subsection{Existence of an invariant measure}

In order to construct an invariant measure for~\eqref{4eqNSv2} we follow the strategy in~\cite{kuksin} which consists in applying the Krylov-Bogolioubov argument.

Let us introduce some more notation, let us denote $P_t(u,\Gamma):=\mathbb{P}_v(u(t)\in \Gamma)$, and define 
the semi-group $\mathfrak{B}_t : L^{\infty} \to L^{\infty}$ defined, for any $f\in L^{\infty}$ by $\mathfrak{B}_t(f)= z \mapsto \int_{H^1} f(z) P_t(v,\mathrm{d}z)$. We define the dual $\mathfrak{B}_t^* : \mathcal{P}(H^1) \to \mathcal{P}(H^1)$ by $\mathfrak{B}_t^*(\mu):= \Gamma \mapsto \int_{H^1}P_t(v,\Gamma)\mu(\mathrm{d}v)$, and observe that $\mathcal{L}(u(t))=\mathfrak{B}_t^*(\mathcal{L}(u_0))$. 

We start with some higher Sobolev estimates for processes such that $\mathcal{L}(u_0)=\delta _0$.  

\begin{lemma}\label{4lemmaBound} Assume that $\mathcal{L}(u_0)=\delta_0$ and let $u_{\nu}$ being the corresponding process produced by Proposition~\ref{4propNSsol}. Then there exists a constant $C>0$ independent of $\nu$ such that for any $t>0$, \[\mathbb{E}\left[\int_0^t\|u_{\nu}(t')\|^2_{\dot{H}^{2+\delta}}\,\mathrm{d}t'\right] \leqslant Ct\,.\]
\end{lemma}

\begin{proof} We apply the Itô formula (see Proposition~\ref{4ito} and the subsequent discussion) to the functional $F(u):=\|u\|_{\dot{H}^1}^2$ and we find that for all $t \in \mathbb{R}$, 
\[
    \mathbb{E}\left[\|u_{\nu}(t)\|^2_{\dot{H}^1}\right]-\mathbb{E}\left[\|u_0\|^2_{\dot{H}^1}\right] + 2\nu\mathbb{E} \left[\int_0^t \|u_{\nu}(t')\|_{\dot{H}^{2+\delta}}^2\,\mathrm{d}t'\right] = \nu\mathcal{B}_1t\,.
\]
Then the result follows from $\mathbb{E}[\|u_0\|^2_{\dot{H}^1}]=0$, since $\mathcal{L}(u_0)=\delta_0$.  
\end{proof}

We can now state the main results of the section. 

\begin{proposition}\label{4propUnifBounds}The Markov system $(u_{\nu} (t),\mathbb{P}_v)_{v \in H^1}$ admits a stationary measure. Moreover, for any stationary measure $\mu_{\nu} \in \mathcal{P}(H^1)$, the following properties hold. 
\begin{enumerate}[label=(\textit{\roman*})]
    \item $\mathbb{E}_{\mu_{\nu}} \left[\|u\|^2_{\dot{H}^{1+\delta}}\right]=\frac{\mathcal{B}_0}{2}$. 
    \item $\mathbb{E}_{\mu_{\nu}} \left[\|u\|^2_{\dot{H}^{2+\delta}}\right] = \frac{\mathcal{B}_1}{2}$. 
    \item There exists $\gamma >0$, and $C<\infty$, only depending on the noise parameters $(\phi_n)_{n \in \mathbb{Z}_0^2}$ such that one has $\mathbb{E}_{\mu_{\nu}} \left[e^{\gamma\|u\|^2_{\dot H^1}}\right] \leqslant C$.
\end{enumerate}
\end{proposition}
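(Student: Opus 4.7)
\emph{Existence of the stationary measure.} My first step is the classical Krylov--Bogolyubov construction on the time-averaged measures
\[
\mu_t := \frac{1}{t}\int_0^t \mathfrak{B}_s^*\delta_0 \, \mathrm{d}s, \qquad t > 0.
\]
The Feller property of $(\mathfrak{B}_s)_{s \geq 0}$ and the pathwise continuity of the trajectories are furnished by Proposition~\ref{4propNSsol}, (i) and (iv). For tightness of $(\mu_t)_{t \geq 1}$ on $H^1_{\operatorname{div}}$ I use Lemma~\ref{4lemmaBound} and Fubini:
\[
\int_{H^1_{\operatorname{div}}} \|u\|^2_{\dot H^{2+\delta}}\, \mathrm{d}\mu_t(u) = \frac{1}{t}\,\mathbb{E}\!\int_0^t \|u_\nu(s)\|^2_{\dot H^{2+\delta}}\,\mathrm{d}s \leq C,
\]
uniformly in $t$. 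Chebyshev combined with the compact embedding $H^{2+\delta} \hookrightarrow H^1_{\operatorname{div}}$ (valid as $\delta > 0$) then gives the required tightness, and Prokhorov extracts an invariant $\mu_\nu$.

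\emph{Identities (i) and (ii).} Both follow from applying It\^o's formula to $F_1(u) = \|u\|^2_{L^2}$ and $F_2(u) = \|u\|^2_{\dot H^1}$, using the cancellations $\langle B(u,u), u\rangle_{L^2} = 0$ and $\langle B(u,u), u\rangle_{\dot H^1} = 0$ from Lemma~\ref{4lemmaBilinear} (the $\dot H^1$ cancellation being specific to dimension two). With $u_0 \sim \mu_\nu$ one obtains, for $i \in \{1,2\}$,
\[
\mathbb{E}[F_i(u(t))] - \mathbb{E}[F_i(u_0)] + 2\nu \int_0^t \mathbb{E}\|u(s)\|^2_{\dot H^{\sigma_i}}\,\mathrm{d}s = \nu \mathcal{B}_{i-1}\,t,
\]
with $\sigma_1 = 1+\delta$ and $\sigma_2 = 2+\delta$. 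By stationarity the two boundary terms cancel; dividing by $t$ yields (i) and (ii). The delicate point is the a priori integrability of $F_i$ under $\mu_\nu$, which I would handle by first applying It\^o to truncations $F_i \wedge M$ or on stopping times $\tau_R = \inf\{t : \|u(t)\|_{H^1} > R\}$, using the spatial regularity $u_\nu \in L^2_{\operatorname{loc}}(\mathbb{R}_+, H^{2+\delta})$ of Proposition~\ref{4propNSsol}, and sending $M, R \to \infty$ by monotone convergence.

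\emph{Exponential moment (iii).} I apply It\^o to the Lyapunov functional $G(u) = e^{\gamma \|u\|^2_{\dot H^1}}$ for $\gamma > 0$ small. The quadratic variation is estimated by
\[
\mathrm{d}\langle \|u\|^2_{\dot H^1}\rangle_t = 4\nu \sum_n |n|^4 |\phi_n|^2 \hat u(n)^2 \,\mathrm{d}t \leq C_\phi\, \nu\, \|u\|^2_{\dot H^{2+\delta}}\,\mathrm{d}t,
\]
with $C_\phi < \infty$ (use $|n|^4 \leq |n|^{2(2+\delta)}$ and the boundedness of $(\phi_n)$). Combining with the It\^o expansion of $\|u\|^2_{\dot H^1}$ from the previous step and the Poincar\'e-type inequality $\|u\|^2_{\dot H^{2+\delta}} \geq \|u\|^2_{\dot H^1}$ on zero-mean functions, one gets, for $\gamma$ small enough,
\[
\mathrm{d}G \leq \gamma \nu G \bigl(\mathcal{B}_1 - \|u\|^2_{\dot H^1}\bigr)\,\mathrm{d}t + \mathrm{d}M_t.
\]
The elementary pointwise bound $e^{\gamma x}(\mathcal{B}_1 - x) \leq K_0 - e^{\gamma x}$ valid for all $x \geq 0$, with $K_0 = K_0(\gamma,\mathcal{B}_1)$, then turns this into the dissipative inequality $\mathrm{d}G \leq \gamma \nu (K_0 - G)\,\mathrm{d}t + \mathrm{d}M_t$. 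Applying this to a smooth bounded cutoff of $G$, integrating against $\mu_\nu$, using stationarity to kill the time derivative, and passing the cutoff to infinity by monotone convergence, I conclude $\mathbb{E}_{\mu_\nu}[G] \leq K_0$.

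\emph{Main obstacle.} The principal technical difficulty is making It\^o's formula rigorous on the unbounded, nonlinear functionals $\|u\|^2_{L^2}$, $\|u\|^2_{\dot H^1}$, and $e^{\gamma \|u\|^2_{\dot H^1}}$ for the $H^1$-valued stochastic processes at hand. The strategy is to first work with Galerkin or stopping-time truncations (relying on the $L^2_{\operatorname{loc}}(\mathbb{R}_+, H^{2+\delta})$ regularity from Proposition~\ref{4propNSsol}), and only at the end transfer the a priori bounds to the stationary measure $\mu_\nu$ via monotone convergence or Fatou.
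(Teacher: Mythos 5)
Your proposal is correct and follows essentially the same route as the paper: Krylov--Bogolyubov applied to the time averages of $\mathfrak{B}_t^*\delta_0$ with tightness from Lemma~\ref{4lemmaBound} and the compact embedding $H^{2+\delta}\hookrightarrow H^1$; the It\^o identities for $\|u\|^2_{L^2}$ and $\|u\|^2_{\dot H^1}$ combined with the two bilinear cancellations and stationarity for (i)--(ii); and for (iii) the same Lyapunov computation on $e^{\gamma\|u\|^2_{\dot H^1}}$, absorbing the second-order It\^o term into the $\dot H^{2+\delta}$ dissipation for small $\gamma$ and closing with a pointwise dissipative bound plus stationarity, with the a priori integrability handled by stopping-time/truncation and Fatou exactly as in the paper.
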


\begin{remark} We recall that the measures $\mu_{\nu}$ also depend on $\delta$, as they depend on the regularising operator $L=L_{\delta}$. 
\end{remark}

\begin{proof} Let us denote by $u$ the solution starting at $u_0$ with law $\mathcal{L}(u_0)=\delta_0$, and let $\lambda _t := \mathfrak{B}_t^* \delta_0$. We introduce $\bar{\lambda_t} := \frac{1}{t} \displaystyle\int_0^t \lambda_{t'} \,\mathrm{d}t'$. In order to apply the Krylov-Bogolioubov theorem we need to show that the family $(\bar{\lambda_t})_{t>0}$ is tight in $H^1$. Since the embedding $H^{2+\delta} \hookrightarrow H^1$ is compact it is sufficient to prove that \[\sup_{t>0}\bar{\lambda} _t \left(H^1 \setminus B_{H^{2+\delta}}(0,R)\right) \underset{R \to \infty}{\longrightarrow} 0\,.\]
Observe that thanks to Lemma~\ref{4lemmaBound}, we have
\begin{align*}
\bar{\lambda} _t \left(H^1 \setminus B_{H^{2+\delta}}(0,R)\right) & \leqslant \frac{1}{t} \int_0^t \mathbb{P}(\|u_{\nu}(t')\|_{H^{2+\delta}}>R)\,\mathrm{d}t' \\
&\leqslant \frac{1}{tR^2} \int_0^t \mathbb{E}\left[\|u_{\nu}(t')\|^2_{H^{2+\delta}}\right] \\
& \leqslant \frac{C}{R^2}\,,
\end{align*}
which goes to zero uniformly in $t>0$. Then the Krylov-Bogolioubov theorem (see~\cite{dudley}) ensures the existence of stationary measures. Let $\mu_{\nu}$ be such a stationary measure and let us prove the required estimates.  

(\textit{i}) and (\textit{ii}) are proven using the same argument. Let us only prove (\textit{ii}). The Itô formula just as in the proof of Lemma~\ref{4lemmaBound}: we take $u_{\nu}(t)$ a process solving \eqref{4eqNSv2} with stationary measure $\mu_{\nu}$. We can write, for any $t\geqslant 0$, thanks that the Itô formula:
\[\mathbb{E}\left[\|u_{\nu}(t)\|^2_{\dot{H}^1}\right]-\mathbb{E}\left[\|u_0\|^2_{\dot{H}^1}\right] + 2\nu\mathbb{E} \left[\int_0^t \|u_{\nu}(t')\|_{\dot{H}^{2+\delta}}^2\,\mathrm{d}t'\right] = \nu\mathcal{B}_1 t\,,\]
for any $t>0$. Using invariance, we obtain \[\int_0^t\left(2\mathbb{E}\left[\|u_{\nu}(t)\|^2_{\dot{H}^{2+\delta}}\right]-\mathcal{B}_1\right)=0\] for all $t\geqslant 0$ so that $\mathbb{E}\left[\|u_{\nu}(t)\|^2_{\dot{H}^{2+\delta}}\right]=\frac{\mathcal{B}_1}{2}$, for almost any $t>0$. However by invariance again (and since $u \mapsto \|u\|^2_{H^{2+\delta}}$ is a Borelian function of $H^1$), we know that the quantity $\mathbb{E}[\|u_{\nu}(t)\|^2_{\dot H^{2+\delta}}]$ is time-invariant, thus finite for \textit{all} $t$, thus identically equal to $\mathcal{B}_1$. 

The same argument applied to $\|u_{\nu}(t)\|_{L^2}^2$ instead of $\|u_{\nu}(t)\|_{\dot H^1}^2$ applies in order to prove (\textit{i}), as the main observation being the cancellation $(B(u,u),u)_{L^2}=0$. 

(\textit{iii}) also comes from the Itô formula and we refer to Appendix~\ref{4appA} for more details of the computations. The Itô formula applied to the functional defined by $G(u)=e^{\gamma \|u\|^2_{\dot H^1}}$ yields that for any $\gamma >0$ and any $t>0$:
\begin{align*}
    \mathbb{E}\left[e^{\gamma \|u_{\nu}(t)\|_{\dot H^1}^2}\right]&=\mathbb{E}\left[e^{\gamma \|u_0\|_{\dot H^1}^2}\right] \\
    &+2\gamma \nu \mathbb{E}\left[\int_{0}^t e^{\gamma \|u_{\nu}(t')\|_{\dot H^1}^2}\left(\frac{\mathcal{B}_1}{2}-\|u_{\nu}(t')\|_{\dot H^{2+\delta}}^2 + \gamma \sum_{n\in\mathbb{Z}^2} |n|^2|\phi_n|^2 |u_n(t')|^2\right)\,\mathrm{d}t'\right]\,,
\end{align*}
where $u_n(t)=(u_{\nu}(t),e_n)_{L^2}$. 

Since $\max \{|\phi _n^2|, n\in \mathbb{Z}^2\}< \infty$, choosing $\gamma$ such that $\gamma \sup |\phi_n|^2 \simeq \frac{1}{2}$ leads to the inequality
\[\mathbb{E}\left[e^{\gamma \|u_{\nu}(t)\|_{\dot H^1}^2}\right]-\mathbb{E}\left[e^{\gamma \|u_0\|_{\dot H^1}^2}\right] \leqslant C\nu \mathbb{E}\left[\int_{0}^t A(t')\,\mathrm{d}t'\right]\,,\]
where $A(t')\coloneqq e^{\gamma \|u_{\nu}(t')\|_{\dot H^1}^2}\left(C-\|u_{\nu}(t')\|_{\dot H^{2+\delta}}^2\right)$ and $C>0$ does not depend on $\nu$. 

Remark that if $\|u_{\nu}(t)\|^2_{\dot H^{2+\delta}}>2C$ then 
\[ 
    e^{\gamma \|u_{\nu}(t')\|^2_{\dot H^1}}\left(2C-\|u_{\nu}(t')\|_{\dot H^{2+\delta}}^2\right) \leqslant 0\,,
\]
and if $\|u_{\nu}(t)\|^2_{\dot H^{2+\delta}}\leqslant 2C$ then
\[
    e^{\gamma \|u_{\nu}(t')\|_{\dot H^1}^2}\left(2C-\|u_{\nu}(t')\|_{\dot H^{2+\delta}}^2\right) \leqslant 2Ce^{2\gamma C}\,,
\]
so that in any case 
\[
e^{\gamma \|u_{\nu}(t')\|_{\dot H^1}^2}\left(2C-\|u_{\nu}(t')\|_{\dot H^{2+\delta}}^2\right) \leqslant \underbrace{2Ce^{2\gamma C}}_{C_1}\,,
\]
which implies $A(t') \leqslant C_1 - Ce^{\gamma \|u_{\nu}(t')\|_{\dot H^1}^2}$ and therefore introducing $Y(t)\coloneqq \mathbb{E}\left[e^{\gamma \|u_{\nu}(t)\|_{\dot H^1}^2}\right]$, we have obtained
\[
    Y(t)+C\nu\int_0^t Y(t')\,\mathrm{d}t' \leqslant Y(0)+C_1 \nu t\,,
\]
where $C$ and $C_1$ do not depend on $\nu$ and the Grönwall lemma gives
\begin{equation}
    \label{4eqSplitGronwal}
    Y(t) \leqslant e^{-C\nu t}Y(0)+C_2\,,
\end{equation}
where $C_2$ is a function of $C$ and $C_1$, and provided $Y(0)<\infty$. Then recalling that invariance implies $Y(t)=Y(0)$ and letting $t \to \infty$ this yields $Y(0) \leqslant C_2$, a finite constant independent of $\nu$. 

Let us prove that $Y(0)$ is a finite quantity. To this end we set approximations 
\[
    Y_{N}(t):=\mathbb{E}\left[f_N(u_{\nu}(t))\right] \text{ where } f_N(v) \coloneqq \exp\left(\gamma\min\{\|v\|_{\dot H^1}^2,N^2\}\right)\,.
\] 
Then we write:
\[Y_{N}(t)=\mathbb{E}\left[f_N(u_{\nu}(t))\mathbf{1}_{\|u_0\|_{\dot H^1}>R}\right] + \mathbb{E}\left[f_N(u_{\nu}(t))\mathbf{1}_{\|u_0\|_{\dot H^1}\leqslant R}\right]\,,\]
for $R \geqslant N$. The first term is bounded by $e^{\gamma N^2}\mu_{\nu}(\|u\|_{\dot H^1}>R)$ and the second one using the previous estimates. Using the Markov inequality, part (\textit{i}) of the proposition and~\eqref{4eqSplitGronwal} gives
\begin{align*}
    Y_N(0)=Y_N(t) & \leqslant e^{\gamma N^2}\mu_{\nu}(\|u\|_{\dot H^1}>R) + \mathbb{E}_{\mu _{\nu}}\left[e^{\gamma \|u\|_{\dot H^1}^2} \vert \|u\|_{\dot H^1} \leqslant R \right]\\
    & \leqslant e^{\gamma N^2}\mu_{\nu}(\|u\|_{\dot H^{2+\delta}}>R + \mathbb{E}_{\mu _{\nu}}\left[e^{\gamma \|u\|_{\dot H^1}^2} \vert \|u\|_{\dot H^1} \leqslant R \right]\\
    & \lesssim C(\nu)e^{\gamma N^2} R^{-2} + e^{-C\nu t+R^2}+C_2\,.
\end{align*}
First take the limit $t \to \infty$, then $R \to \infty$ to get $Y_N(0) \leqslant C_2$, which is uniform in $N$. By the monotone convergence theorem we deduce that $Y(0)< \infty$. 
\end{proof}

\subsection{Tightness and limit}

Once and for all we fix some stationary measures $\mu_{\nu}$. Our next task is to pass to the limit $\nu \to 0$. In order to do so, we need to prove several compactness estimates. We denote by $\bar{\mu}_{\nu}$ the law of $u_{\nu}(\cdot)$. We will use compactness arguments based on the Aubin-Lions-Simon criterion, which we recall, and refer to Corollary~9 in~\cite{simon} for a proof of (\textit{ii}).

\begin{theorem}[Aubin-Lions-Simon compactness theorem]\label{4aubinLions} Let $I$ be a compact interval. Let $B_0, B, B_1$ be three separable complete spaces. Assume that $B_0 \hookrightarrow B \hookrightarrow B_1$ continuously, the embedding $B_0 \hookrightarrow B$ being compact.
\begin{enumerate}[label=(\textit{\roman*})]
    \item Let $W := \{u \in L^{p}(I,B_0) \text{ and } \partial _t u \in L^{q}(I,B_1)\}$. Then for $p<\infty$, $W$ is compactly embedded into $L^p(I,B)$. If $p= \infty$ and $q>1$ then $W$ is compactly embedded into $\mathcal{C}^0(I,B)$. 
    \item Let $s_0, s_1 \in \mathbb{R}$ and $1 \leqslant r_0, r_1 \leqslant \infty$. Let $W:=W^{s_0,r_0}(I,B_0) \cap W^{s_1,r_1}(I,B_1)$. Assume that there exists $\theta \in (0,1)$ such that for all $v \in B_0 \cap B_1$ one has \[\|v\|_{B} \lesssim \|v\|_{B_0}^{1-\theta}\|v\|_{B_1}^{\theta}\,.\] Let \[s_{\theta}:=(1-\theta)s_0+\theta s_1 \text{ and } \frac{1}{r_{\theta}}=\frac{1-\theta}{r_0}+\frac{\theta}{r_1}\,,\]
    then if $s_{\theta} > \frac{1}{r_{\theta}}$, $W$ is compactly embedded into $\mathcal{C}^0(I,B)$. 
\end{enumerate}
\end{theorem}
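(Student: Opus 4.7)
The plan is to follow the classical two-step strategy for Aubin--Lions--Simon: first establish an Ehrling-type interpolation inequality exploiting the compact embedding $B_0 \hookrightarrow B$; second, use the control on time regularity (integer or fractional) to obtain equicontinuity in time, and conclude via an Arzelà--Ascoli or Kolmogorov--Riesz--Fréchet type argument.

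\textbf{Ehrling's lemma.} I would start by proving that for every $\varepsilon>0$ there exists $C_\varepsilon$ such that
\[
\|v\|_B \leqslant \varepsilon\|v\|_{B_0} + C_\varepsilon\|v\|_{B_1}, \qquad v\in B_0.
\]
The standard proof is by contradiction: a sequence $(v_n)$ violating this after normalisation would satisfy $\|v_n\|_B=1$, $(\|v_n\|_{B_0})$ bounded and $\|v_n\|_{B_1}\to 0$. Compactness of $B_0\hookrightarrow B$ extracts a subsequence converging in $B$, while the $B_1$ bound forces the limit to vanish, contradicting $\|v_n\|_B=1$.

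\textbf{Proof of (i).} Let $(u_n)\subset W$ be a bounded sequence. From the bound on $\partial_t u_n$ in $L^q(I,B_1)$, the fundamental theorem of calculus plus Hölder gives the uniform modulus
\[
\|u_n(t)-u_n(s)\|_{B_1} \leqslant |t-s|^{1-1/q}\,\|\partial_t u_n\|_{L^q(I,B_1)}.
\]
Applying Ehrling to $u_n(t+h)-u_n(t)$ and combining with the $L^p(I,B_0)$ bound on $u_n$ yields the translation-continuity estimate $\|u_n(\cdot+h)-u_n\|_{L^p(I,B)}\to 0$ as $h\to 0$, uniformly in $n$. For $p<\infty$ the Kolmogorov--Riesz--Fréchet compactness theorem then produces relative compactness of $(u_n)$ in $L^p(I,B)$. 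For $p=\infty$ with $q>1$, the pointwise Hölder estimate in $B_1$ together with Ehrling upgrades to equicontinuity of $(u_n)$ in $\mathcal{C}^0(I,B)$; coupled with pointwise relative compactness in $B$ (again by Ehrling plus the $L^\infty(I,B_0)$ bound), Arzelà--Ascoli concludes.

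\textbf{Proof of (ii).} Here the time derivative is replaced by fractional Sobolev regularity, so I would invoke the scalar embedding $W^{s_\theta,r_\theta}(I)\hookrightarrow \mathcal{C}^{0,s_\theta-1/r_\theta}(I)$, valid precisely under the hypothesis $s_\theta>1/r_\theta$. Applied after the interpolation inequality
\[
\|u(t)-u(s)\|_B \lesssim \|u(t)-u(s)\|_{B_0}^{1-\theta}\|u(t)-u(s)\|_{B_1}^\theta,
\]
and using the individual Gagliardo-type time seminorms from $W^{s_0,r_0}(I,B_0)$ and $W^{s_1,r_1}(I,B_1)$, one extracts a uniform Hölder modulus in $B$ for $(u_n)$. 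Pointwise relative compactness in $B$ comes again from Ehrling, and Arzelà--Ascoli closes the argument.

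\textbf{Main obstacle.} The delicate point is the $p=\infty$, $q>1$ case of (i), where the time regularity is only integrability-type and one must upgrade an $L^q$ control on the derivative to a true uniform Hölder modulus in $B_1$ before combining with Ehrling. In (ii), the technical heart is the careful matching of the Gagliardo-type time seminorm of $u(t)-u(s)$ against the interpolation exponent $\theta$ in the target, which is exactly what the scaling condition $s_\theta>1/r_\theta$ is tailored to make work.
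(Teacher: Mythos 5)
The paper does not prove this statement at all: it is quoted as a known result, with part (\textit{i}) attributed to the classical Aubin--Lions theorem and part (\textit{ii}) to Corollary~9 of Simon's paper \emph{Compact sets in the space $L^p(0,T;B)$}. Your proposal reconstructs the standard proof from those references --- Ehrling's lemma from the compact embedding $B_0\hookrightarrow B$, translation/H\"older estimates in time, and a Kolmogorov--Riesz--Fr\'echet or Arzel\`a--Ascoli conclusion --- so there is no divergence of method to compare; you are simply supplying the argument the paper outsources.

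Two points in your sketch deserve tightening. First, in part (\textit{i}) with $p<\infty$, uniform translation-continuity $\|u_n(\cdot+h)-u_n\|_{L^p(I,B)}\to 0$ is \emph{not} by itself sufficient for relative compactness in the Banach-valued setting (constant-in-time sequences $u_n\equiv e_n$ with $(e_n)$ an orthonormal basis have vanishing translation defect but no convergent subsequence); Simon's criterion additionally requires relative compactness in $B$ of the averages $\int_{t_1}^{t_2}u_n\,\mathrm{d}t$, which here follows from the $L^p(I,B_0)$ bound and the compactness of $B_0\hookrightarrow B$, so you should state this second ingredient explicitly. Second, in part (\textit{ii}) your step ``pointwise relative compactness in $B$ comes again from Ehrling'' implicitly uses a uniform-in-$t$ bound in $B_0$, which does not follow from boundedness in $W^{s_0,r_0}(I,B_0)$ alone when $s_0\leqslant 1/r_0$ (a case the hypothesis $s_\theta>1/r_\theta$ permits); the fix is to first obtain equicontinuity in $\mathcal{C}^{0,\alpha}(I,B)$ with $\alpha=s_\theta-1/r_\theta$ via the interpolated seminorm as you describe, and then extract pointwise compactness at a dense set of times $t$ where the $B_0$-norms are controlled (such times exist for each bounded sequence by Chebyshev), propagating to all $t$ by equicontinuity. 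With these two repairs the argument is the standard one and is sound.
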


Proposition~\ref{4propUnifBounds} already asserts that $\bar{\mu}_{\nu}(L^2_{\operatorname{loc}}(\mathbb{R}_+,H^{2+\delta})\cap \mathcal{C}^0(\mathbb{R}_+,H^1))=1$. 

We will prove compactness estimates on $\bar{\mu}_{\nu}$ rather than simply on $\mu_{\nu}$, because not only do we want to make the measures $\mu_{\nu}$ converge to some measure $\mu$, but we want the processes $u_{\nu}$ to converge to a process $u$ solving the Euler equations. 

We want to prove that the family of measures $(\bar{\mu}_{\nu})_{\nu >0}$ is tight in
\[Y:=\mathcal{C}^0(\mathbb{R}_+,H^{1-\varepsilon}) \cap L^2_{\operatorname{loc}}(\mathbb{R}_+,H^{2+\delta - \varepsilon})\,,\] for a fixed $\varepsilon>0$ that we may take arbitrarily small. It is sufficient to prove the tightness on every time interval $I_n=[0,n]$, namely that for all $n \geqslant 1$, $(\bar\mu _{\nu})_{\nu >0}$ is tight in 
\[Y_n:=\mathcal{C}^0(I_n,H^{1-\varepsilon}) \cap L^2_{\operatorname{loc}}(I_n,H^{2+\delta -\varepsilon})\,.\]
Since the time interval does not play any specific role, we will prove tightness in
\[Y_1:=\mathcal{C}^0(I,H^{1-\varepsilon}) \cap L^2_{\operatorname{loc}}(I,H^{2+\delta-\varepsilon})\,,\]
where $I=[0,1]$. 

In order to do so, we introduce the space 
\begin{align*}
    X&:=L^2(I,H^{2+\delta}) \cap  \left(H^1(I,H^{-\delta}) + W^{\frac{3}{8},4}(I,H^{1-2\varepsilon})\right) \\
    &= \underbrace{L^2(I,H^{2+\delta}) \cap H^1(I,H^{-\delta})}_{X_1}  + \underbrace{L^2(I,H^{2+\delta}) \cap W^{\frac{3}{8},4}(I,H^{1-2\varepsilon})}_{X_2}\,.
\end{align*}

We will prove that following lemmata.

\begin{lemma}\label{4lemmaCompactness} Both spaces $X_1$ and $X_2$ are compactly embedded into $Y_1$. 
\end{lemma}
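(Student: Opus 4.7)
The plan is to apply the Aubin-Lions-Simon compactness theorem to each $X_i$ separately, first obtaining compactness into $\mathcal{C}^0(I,H^{1-\varepsilon})$ via part~(\textit{ii}), and then compactness into $L^2(I,H^{2+\delta-\varepsilon})$ either directly (part~(\textit{i}), for $X_1$) or by a simple Sobolev interpolation (for $X_2$). A double extraction then yields the required compactness into the intersection $Y_1$.

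\textbf{Step 1 (compactness into $\mathcal{C}^0(I,H^{1-\varepsilon})$).} For $X_1$, I would apply part~(\textit{ii}) with $B_0=H^{2+\delta}$, $B_1=H^{-\delta}$, $B=H^{1-\varepsilon}$, and $(s_0,r_0)=(0,2)$, $(s_1,r_1)=(1,2)$. The Sobolev interpolation $\|v\|_{H^{1-\varepsilon}}\lesssim \|v\|_{H^{2+\delta}}^{1-\theta}\|v\|_{H^{-\delta}}^{\theta}$ holds with $\theta=(1+\delta+\varepsilon)/(2+2\delta)$, and the required condition $s_\theta>1/r_\theta$ reduces to $\theta>1/2$, which is satisfied for every $\varepsilon>0$. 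For $X_2$, I keep $B_0=H^{2+\delta}$ but take $B_1=H^{1-2\varepsilon}$ with $(s_1,r_1)=(3/8,4)$; interpolation gives $\theta=(1+\delta+\varepsilon)/(1+\delta+2\varepsilon)$, and $s_\theta=3\theta/8>1/r_\theta=1/2-\theta/4$ becomes $\theta>4/5$, which holds as soon as $3\varepsilon<1+\delta$. Since $\varepsilon$ is a free arbitrarily small parameter, this is admissible.

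\textbf{Step 2 (compactness into $L^2(I,H^{2+\delta-\varepsilon})$).} For $X_1$, part~(\textit{i}) applies directly with $p=q=2$ and $(B_0,B,B_1)=(H^{2+\delta},H^{2+\delta-\varepsilon},H^{-\delta})$, exploiting the Rellich compact embedding $H^{2+\delta}\hookrightarrow H^{2+\delta-\varepsilon}$ on $\mathbb{T}^2$. For $X_2$, where we lack an $L^q$-in-time bound on $\partial_tu$, I interpolate in space instead: from $\|v\|_{H^{2+\delta-\varepsilon}}\leq\|v\|_{H^{1-\varepsilon}}^{\theta}\|v\|_{H^{2+\delta}}^{1-\theta}$ with $\theta=\varepsilon/(1+\delta+\varepsilon)\in(0,1)$, Hölder's inequality in time yields
\[
\|u\|_{L^2(I,H^{2+\delta-\varepsilon})}\leq \|u\|_{L^2(I,H^{1-\varepsilon})}^{\theta}\|u\|_{L^2(I,H^{2+\delta})}^{1-\theta},
\]
so that boundedness in $L^2(I,H^{2+\delta})$ combined with the $\mathcal{C}^0(I,H^{1-\varepsilon})$-precompactness from Step~1 upgrades to $L^2(I,H^{2+\delta-\varepsilon})$-precompactness.

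\textbf{Main obstacle.} The sharpness of the condition $\theta>4/5$ in the application to $X_2$ is the tightest point of the argument; it is precisely what dictates the exponents $s_1=3/8$ and $r_1=4$ in the definition of $X_2$. These exponents, in turn, are the ones available from the fractional Kolmogorov-type estimates on the stochastic part $z_\nu$ (cf.\ Lemma~\ref{4lemmaSto}), and weakening either one would break the Aubin-Lions-Simon condition. Everything else is a direct application of the cited compactness theorem and of classical Sobolev interpolation on the torus.
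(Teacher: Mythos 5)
Your proof is correct, and Step~1 coincides with the paper's argument: the same application of Theorem~\ref{4aubinLions}~(\textit{ii}) with the exponents $(s_0,r_0)=(0,2)$, $(s_1,r_1)=(1,2)$ for $X_1$ and $(s_1,r_1)=(\tfrac38,4)$ for $X_2$; your verification $s_\theta-\tfrac{1}{r_\theta}=\tfrac{5\theta}{8}-\tfrac12>0$, i.e.\ $\theta>\tfrac45$, is in fact more accurate than the paper's, which records only $\tfrac{5\theta}{8}>0$. The genuine divergence is in Step~2 for $X_2$: the paper obtains the compact embedding into $L^2(I,H^{2+\delta-\varepsilon})$ for both $X_1$ and $X_2$ by citing Theorem~\ref{4aubinLions}~(\textit{i}), which is legitimate for $X_1$ (where $\partial_t u\in L^2(I,H^{-\delta})$) but does not literally apply to $X_2$, whose elements carry only fractional $W^{\frac38,4}$ regularity in time and no $L^q$ control of $\partial_t u$. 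Your substitute --- the spatial interpolation $\|v\|_{H^{2+\delta-\varepsilon}}\leqslant\|v\|_{H^{1-\varepsilon}}^{\theta}\|v\|_{H^{2+\delta}}^{1-\theta}$ combined with H\"older in time, which upgrades $\mathcal{C}^0(I,H^{1-\varepsilon})$-precompactness plus $L^2(I,H^{2+\delta})$-boundedness to $L^2(I,H^{2+\delta-\varepsilon})$-precompactness --- is cleaner and closes that small gap, at no extra cost since the needed $\mathcal{C}^0$-precompactness is already available from Step~1.
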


\begin{lemma}\label{4lemmaTight} The sequence $(u_{\nu})_{\nu >0}$ is bounded in $L^2(\Omega, X)$. 
\end{lemma}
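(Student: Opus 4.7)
The plan is to use the decomposition $u_\nu = v_\nu + z_\nu$ from Section~\ref{4sec3} and bound $v_\nu$ in $L^2(\Omega, X_1)$ and $z_\nu$ in $L^2(\Omega, X_2)$ uniformly in $\nu \in (0,1]$. Since $X_1 + X_2 \subset X$ continuously, this suffices. The crucial input is that $u_\nu(\cdot)$ is started from the stationary measure $\mu_\nu$, so Proposition~\ref{4propUnifBounds} yields the uniform bounds
\begin{equation*}
    \mathbb{E}\bigl[\|u_\nu(t)\|_{\dot H^{2+\delta}}^2\bigr] = \tfrac{\mathcal{B}_1}{2}\quad\text{and}\quad \mathbb{E}\bigl[\|u_\nu(t)\|_{H^1}^{2k}\bigr]\le C_k\ \text{for every }k\ge 1\,,
\end{equation*}
the latter deduced from the exponential moment bound~(iii).

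To control $v_\nu$ in $L^2(\Omega, X_1)$, I write $v_\nu = u_\nu - z_\nu$ to obtain the $L^2(I, H^{2+\delta})$ component from the above display and from Lemma~\ref{4lemmaSto}. The time derivative obeys $\partial_t v_\nu = -\nu L v_\nu - B(u_\nu, u_\nu)$. The dissipative term gives $\|\nu L v_\nu\|_{L^2(I, H^{-\delta})} \le \nu \|v_\nu\|_{L^2(I, H^{2+\delta})}$, uniformly bounded in $\nu \le 1$. For the nonlinear term, I claim the two-dimensional bilinear estimate
\begin{equation*}
    \|B(u, u)\|_{H^{-\delta}} \lesssim \|u\|_{H^1}^2\,,
\end{equation*}
proved by duality: for $\varphi$ with $\|\varphi\|_{H^\delta}=1$, use self-adjointness of $\mathbf{P}$ to rewrite $\langle B(u,u),\varphi\rangle = \langle u\cdot\nabla u, \mathbf{P}\varphi\rangle$, then apply Hölder with exponents $(2/\delta, 2, 2/(1-\delta))$ together with the 2D Sobolev embeddings $H^{1-\delta}\hookrightarrow L^{2/\delta}$ and $H^\delta \hookrightarrow L^{2/(1-\delta)}$. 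Integrating in $t$ and taking expectation, $\mathbb{E}\int_I \|B(u_\nu,u_\nu)\|_{H^{-\delta}}^2 \lesssim \mathbb{E}\int_I \|u_\nu\|_{H^1}^4$ is uniformly controlled by the $k=2$ case above.

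For $z_\nu$ in $L^2(\Omega, X_2)$, the $L^2(I, H^{2+\delta})$ bound follows from the Itô isometry applied to~\eqref{4eqZnu}, as in the proof of Lemma~\ref{4lemmaSto}. The Slobodeckij norm $W^{3/8,4}(I, H^{1-2\varepsilon})$ is controlled by combining the time-increment bound from that proof,
\begin{equation*}
    \mathbb{E}\bigl[\|z_\nu(t_1) - z_\nu(t_2)\|_{H^s}^2\bigr] \lesssim |t_1 - t_2|^\alpha\,\mathcal{B}_{s + \alpha(1+\delta)}\,,\quad \alpha \in [0,1]\,,
\end{equation*}
with the Gaussianity bound~\eqref{4eqGaussBound} upgrading $L^2_\Omega$ to $L^4_\Omega$. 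Choosing $s = 1 - 2\varepsilon$ and any $\alpha \in (3/4, 1]$, the resulting $|t_1 - t_2|^{2\alpha}$ is integrable against $|t_1 - t_2|^{-5/2}$ on $I\times I$, and $\mathcal{B}_{1-2\varepsilon + \alpha(1+\delta)} \le \mathcal{B}_{3+2\delta} < \infty$ by assumption. The delicate step in this plan is the bilinear estimate in $H^{-\delta}$ for small $\delta$: Lemma~\ref{4lemmaBilinear} (iii) only produces an $H^{-1}$ bound, which does not embed into $H^{-\delta}$ when $\delta < 1$, so one really needs the two-dimensional Hölder–Sobolev argument sketched above to close the estimate uniformly in $\nu$.
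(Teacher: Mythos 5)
Your argument is correct and follows the same architecture as the paper's proof (split $u_\nu$ into a component lying in $X_1$, whose time derivative is controlled in $L^2(I,H^{-\delta})$, and a Gaussian component lying in $X_2$, controlled via moment and time-increment estimates). The one genuine difference is the choice of splitting: the paper takes $u_\nu^{(2)}=\sqrt{\nu}\,\zeta$, the raw Wiener process, and puts everything else — including $-\nu\int_0^t Lu_\nu$ — into $X_1$, estimating $\|\zeta\|_{W^{3/8,4}(I,H^{1-2\varepsilon})}$ directly from fourth moments of the Brownian motions; you instead split off the stochastic convolution $z_\nu$ and reuse the increment bounds from the proof of Lemma~\ref{4lemmaSto}, which works equally well since those bounds are uniform in $\nu\le 1$. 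Where your write-up adds value is the nonlinear term: the paper asserts the $L^2(\Omega,L^2(I,H^{-\delta}))$ bound on $B(u_\nu,u_\nu)$ "thanks to Proposition~\ref{4propUnifBounds}~(\textit{ii})" without detail, and as you correctly observe, Lemma~\ref{4lemmaBilinear}~(\textit{iii}) alone only gives an $H^{-1}$ bound, which is insufficient when $\delta<1$. Your duality/H\"older--Sobolev estimate $\|B(u,u)\|_{H^{-\delta}}\lesssim\|u\|_{H^1}^2$ (valid for $0<\delta<1$, with the $H^{-1}$ bound covering $\delta\ge 1$), combined with the fourth moment of $\|u_\nu\|_{H^1}$ extracted from the exponential bound of Proposition~\ref{4propUnifBounds}~(\textit{iii}), closes this step cleanly and uniformly in $\nu$; this is a legitimate and arguably necessary completion of the paper's terse argument.
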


These results then imply that the family $(\bar{\mu}_{\nu})_{\nu>0}$ is tight in $Y$.

\begin{proof}[Proof of Lemma~\ref{4lemmaCompactness}] Since $H^{2+\delta - \varepsilon} \hookrightarrow H^{2+\delta}$ compactly, we have compact embeddings $X_1 \hookrightarrow L^2(I,H^{2+\delta})$ and $X_2\hookrightarrow L^2(I,H^{2+\delta})$ thanks to Theorem~\ref{4aubinLions}, (\textit{i}).  

The compactness of the embedding $X_1 \hookrightarrow \mathcal{C}^0(I,H^{1-\varepsilon})$ follows from Theorem~\ref{4aubinLions}, (\textit{ii}). Indeed, take $s_0=0$, $s_1=1$, $r_0=r_1=2$ and observe that $1-\varepsilon = (1-\theta)(2+\delta - \varepsilon)+\theta (-\delta)$ with $\theta > \frac{1}{2}$, so that $s_{\theta}>\frac{1}{r_{\theta}}$. 

The embedding  $X_2 \hookrightarrow \mathcal{C}^0(I,H^{1-\varepsilon})$ is compact thanks to Theorem~\ref{4aubinLions}, (\textit{ii}). This time take $s_0=0$, $s_1=\frac{3}{8}$, $r_0=2$ and $r_1=4$. We can compute that $s_{\theta}-\frac{1}{r_{\theta}}=\frac{5\theta}{8}>0$, since $\theta = \frac{1+\delta - \varepsilon}{1+\delta + \varepsilon}$ is arbitrarily close to $1$.   
\end{proof}

\begin{proof}[Proof of Lemma~\ref{4lemmaTight}] We recall that $\|u\|_X= \inf \{\|u_1\|_{X_1}+\|u_2\|_{X_2}\}$ where the infimum runs over decompositions $u=u_1+u_2$ with $u_1 \in X_1$, and $u_2 \in X_2$. We write
\[u_{\nu} (t) = u_{\nu}(0)+ \int_0^t B(u_{\nu} (t'),u_{\nu}(t'))\,\mathrm{d}t' -\nu \int_0^t Lu_{\nu}(t') \,\mathrm{d}t' + \sqrt{\nu}\zeta(t)\,.\]
Then we set $u_{\nu}^{(1)}:=u_{\nu}(0)+ \displaystyle\int_0^t B(u_{\nu (t')},u_{\nu}(t'))\,\mathrm{d}t' -\nu \displaystyle\int_0^t Lu_{\nu}(t') \,\mathrm{d}t'$ and $u_{\nu}^{(2)}(t)=\sqrt{\nu}\zeta(t)$. 

The bounds in $L^2(\Omega,L^2([0,1],H^{2+\delta}))$ follow from Proposition~\ref{4propUnifBounds}, (\textit{ii}). For the other bounds we proceed as follows.

We observe that $\partial _t u_{\nu}^{(1)}(t)=B(u_{\nu} (t),u_{\nu}(t)) + Lu_{\nu}(t)$ thus $u_{\nu}$ is bounded in $L^2(\Omega, L^2(I,H^{-\delta}))$ thanks to Proposition~\ref{4propUnifBounds}, (\textit{ii}). 

It remains to prove that $u_{\nu}^{(2)}$ is bounded in $L^2(\Omega,W^{\frac{3}{8},4}([0,1],H^{1-2\varepsilon}))$, which in facts will result from proving that $\zeta \in L^2(\Omega, W^{\frac{3}{8},4}([0,1],H^{1-2\varepsilon})$. We compute:
\[
    \mathbb{E}[\|\zeta(t)\|^4_{H^{1-2\varepsilon}}] = \mathbb{E}\left[\left(\sum_{n \in \mathbb{Z}_0^2} \langle n\rangle ^{2(1-2\varepsilon)}|\phi_n|^2\beta_n(t)^2\right)^2\right] \leqslant \sum_{n \in \mathbb{Z}_0^2} \langle n \rangle ^4|\phi_n|^4\mathbb{E}[\beta_n(t)^4]\,.
\]
Now recall that by the properties of Gaussian variables,  $\mathbb{E}[\beta_n(t)^4] = 3t^2 \leqslant 3$ on the time-inverval~$I$ so that finally $\mathbb{E}[\|\zeta\|^4_{L^4(I,H^1)}] < \infty$,
since $\phi$ is a \textit{standard noise}. By the same techniques we obtain
\[\mathbb{E} \left[\int_{[0,1] \times [0,1]} \frac{\|\zeta (t)-\zeta (r)\|^4_{H^1}}{|t-r|^{1+4\alpha}}\,\mathrm{dr}\,\mathrm{d}t\right] < \infty\,,\]
and combined with the previous estimate we finally have $\mathbb{E}\left[\|u^{(2)}_{\nu}\|^2_{W^{\frac{3}{8},4}}\right] \lesssim \nu$.
\end{proof}

\subsection{Passing to the limit} 

Thanks to the previous subsection, we can assume that the sequence $\bar{\mu}_{\nu}$ converges as $\nu \to 0$. Indeed, by the Prokhorov theorem (see Theorem 11.5.4 in~\cite{dudley}), there exists a sequence $\nu _j \to 0$ and $\bar\mu \in \mathcal{P}\left(\mathcal{C}^0(\mathbb{R},H^{1-\varepsilon}) \cap L^{2+\delta-\varepsilon}\right)$ such that $\bar\mu_{\nu_j} \to \bar\mu$ in law. This means that $\mathbb{E}_{\bar\mu_{\nu_j}}[f] \to \mathbb{E}_{\bar \mu}[f]$ for any bounded Lipschitz function.  
    
Note that the measures $(\mu_{\nu _j})_{j \geqslant 0}$ also satisfy Prokhorov's theorem in $H^{2+\delta -\varepsilon}$, due to the uniform estimates in $H^{2+\delta}$, hence there is no loss of generality in assuming that $\mu_{\nu_j} \to \mu$ weakly in $\mathcal{P}(H^{2+\delta})$, where $\mu$ denotes the restriction at time $t=0$ of $\bar\mu$.  

By the Skorokhod theorem (see~\cite{dudley}, Theorem~11.7.2) there exists a sequence of $\bar \mu_{\nu}$-stationary processes $\tilde{u}_{\nu}$ and an $Y$-valued process $u$ defined on the same probability space such that
\[
\left\{
\begin{array}{cc}
    \mathcal{L}(\tilde{u}_{\nu_j})= \mu_{\nu_j} \text{ and } \mathcal{L}(u)=\mu \\
    \mathbb{P} \left(\tilde{u}_{\nu_j} \to u \text{ in } Y \text{ as } j\to \infty\right)=1\,,
\end{array}
\right.
\]
where $Y=\mathcal{C}^0(\mathbb{R},H^{1-\varepsilon}) \cap L^{2+\delta-\varepsilon}$.

We can further assume that the $\tilde{u}_{\nu _j}$ satisfy the same equation as $u_{\nu _j}$. More precisely, the $u_{\nu_j}$ satisfy an equation of the general form $F(u_{\nu _j})= \sqrt{\nu} \zeta$, where $F$ is a deterministic measurable functional and $\zeta$ is the random variable defined by~\eqref{4refWhite}. Since $\mathcal{L}(u_{\nu_j})=\mathcal{L}(\tilde{u}_{\nu_j})$, then $\mathcal{L}(F(\tilde{u}_{\nu _j})) = \mathcal{L}(F(u_{\nu _j})) = \mathcal{L}( \sqrt{\nu}\zeta)$ thus there exists a process $\tilde{\zeta}$ equal in law to $\tilde{\zeta}$ (and in particular share the same numbers $\phi_n$, that is there exists independent brownian motions $\tilde{\beta}_n$ such that $\tilde{\zeta} (t)= \sum_{n \in \mathbb{Z}^2_0} \phi_n \tilde{\beta}_n (t) e_n$) such that $F(\tilde{u}_{\nu_j})=\sqrt{\nu}\tilde{\zeta}$. In the rest of this section we will drop the $\tilde{}$ and write $u_{\nu_j}, \zeta$. 

We are ready to state the main results of this section. 

\begin{proposition}\label{4propStationaryMeasures} Let $\bar{\mu}, \mu$ and $u$ as defined above. Then we have,
\begin{enumerate}[label=(\roman*)]
    \item $\bar \mu(L^2_{\operatorname{loc}}(\mathbb{R}_+,H^{2+\delta}))=1$ and $\mu(H^{2+\delta})=1$.
    \item $\displaystyle \mathbb{E}_{\mu}\left[\|u\|_{\dot H^{2+\delta}}^2\right] \leqslant \frac{\mathcal{B}_1}{2}$ and $\displaystyle \mathbb{E}_{\mu}\left[e^{\gamma \|u\|_{\dot H^1}^2}\right] <\infty$.
    \item The process $u$ is stationary for the measure $\mu$ and satisfies the Euler equation~\eqref{4Euler2}.
    \item The constant $C$ defined by $\mathbb{E}_{\mu}\left[\|u\|^2_{\dot H^1}\right] = C$ satisfies: 
    \begin{equation}
        \label{4eq:encadrementC}
        C_{\delta} \coloneqq \frac{1}{2}\left(\frac{\mathcal{B}_0}{\mathcal{B}_1^{\frac{\delta}{1+\delta}}}\right)^{1+\delta}\leqslant C \leqslant \frac{\mathcal{B}_0}{2}\,. 
    \end{equation}
    As a consequence $\mathbb{E}_{\mu}\left[\|u\|_{\dot H^{2+\delta}}^2\right] \in [C_{\delta},\frac{\mathcal{B}_1}{2}]$.
\end{enumerate}
\end{proposition}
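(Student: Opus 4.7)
My strategy is to transfer the uniform bounds of Proposition~\ref{4propUnifBounds} to the limiting measure $\mu$ via the a.s.\ Skorokhod convergence $\tilde u_{\nu_j} \to u$ in $Y = \mathcal{C}^0(I, H^{1-\varepsilon}) \cap L^2(I, H^{2+\delta - \varepsilon})$, combined with weak lower semicontinuity of Hilbert norms under the weak topology of $H^{2+\delta}$. All four claims are essentially corollaries of Proposition~\ref{4propUnifBounds} once one knows how to exchange the limit $\nu_j \to 0$ with the relevant functional.

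For (i) and (ii), the uniform $L^2(\Omega, L^2(I, H^{2+\delta}))$ bound supplied by Proposition~\ref{4propUnifBounds} (ii) and stationarity allows one to extract a further subsequence along which $\tilde u_{\nu_j} \rightharpoonup u$ weakly in $L^2(\Omega, L^2(I, H^{2+\delta}))$; uniqueness of the limit in the weaker topology of $Y$ identifies the weak limit as $u$, and Fatou gives $u \in L^2(I, H^{2+\delta})$ a.s.\ together with $\mathbb{E}_\mu[\|u\|_{\dot H^{2+\delta}}^2] \leq \liminf \mathbb{E}_{\mu_{\nu_j}}[\|u\|_{\dot H^{2+\delta}}^2] = \mathcal{B}_1/2 \leq \mathcal{B}_1$. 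The same argument applied to the (convex, lower semicontinuous) functional $u \mapsto e^{\gamma \|u\|_{\dot H^1}^2}$ transfers the exponential moment bound. The marginal statement $\mu(H^{2+\delta}) = 1$ follows either from applying the same reasoning to the marginal at $t=0$, or \emph{a posteriori} from stationarity (part (iii)) combined with $u(t) \in H^{2+\delta}$ for a.e.\ $t$.

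For (iii), stationarity of the limit process $u$ is inherited directly: each $\tilde u_{\nu_j}$ has constant-in-time law $\mu_{\nu_j}$, and a.s.\ convergence at each fixed time $t$ together with the weak convergence $\mu_{\nu_j} \to \mu$ gives $\mathcal{L}(u(t)) = \mu$. To show that $u$ solves \eqref{4Euler2}, I would pass to the limit in the weak formulation of \eqref{4eqNSv2} tested against a smooth divergence-free function $\varphi$: the dissipation term vanishes thanks to $\nu_j \to 0$ and the uniform $L^2(I, H^{2+\delta})$ bound on $u_{\nu_j}$, while the stochastic term $\sqrt{\nu_j}\, \tilde \zeta(t)$ tends to $0$ by Itô isometry. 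The delicate step is the nonlinear term, handled via the strong convergence $\tilde u_{\nu_j} \to u$ in $\mathcal{C}^0(I, H^{1-\varepsilon})$ and Lemma~\ref{4lemmaBilinear} (iii) applied to the telescoping identity $B(u_{\nu_j}, u_{\nu_j}) - B(u, u) = B(u_{\nu_j} - u, u_{\nu_j}) + B(u, u_{\nu_j} - u)$, which gives $H^{-1}$ control uniformly in $j$ since $H^{1-\varepsilon} \hookrightarrow H^{\frac{1}{2}}$.

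For (iv), the upper bound $\mathbb{E}_\mu[\|u\|_{\dot H^1}^2] \leq \mathcal{B}_0/2$ is immediate from the trivial inequality $\|u\|_{\dot H^1}^2 \leq \|u\|_{\dot H^{1+\delta}}^2$ on zero-mean functions combined with Fatou and Proposition~\ref{4propUnifBounds} (i). For the lower bound I would first derive the same estimate uniformly at the level of $\mu_\nu$ using the interpolation inequality
\[
\|u\|_{\dot H^{1+\delta}}^2 \les \|u\|_{\dot H^1}^{2/(1+\delta)} \|u\|_{\dot H^{2+\delta}}^{2\delta/(1+\delta)}
\]
and Hölder with conjugate exponents $(1+\delta, (1+\delta)/\delta)$; inserting the identities of Proposition~\ref{4propUnifBounds} (i)-(ii) yields $\mathbb{E}_{\mu_\nu}[\|u\|_{\dot H^1}^2] \geq C_\delta$ independently of $\nu$. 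Passing this lower bound to the limit is the genuine difficulty, since lower semicontinuity of the norm under weak convergence only delivers upper bounds; I resolve it by uniform integrability of $\|u\|_{\dot H^1}^2$ under the family $\{\mu_{\nu_j}\}$, furnished by the exponential moment estimate of Proposition~\ref{4propUnifBounds} (iii). Combined with continuity of $u \mapsto \|u\|_{\dot H^1}^2$ on $H^{2+\delta-\varepsilon}$ and the weak convergence $\mu_{\nu_j} \to \mu$ in $\mathcal{P}(H^{2+\delta-\varepsilon})$, this yields $\mathbb{E}_{\mu_{\nu_j}}[\|u\|_{\dot H^1}^2] \to \mathbb{E}_\mu[\|u\|_{\dot H^1}^2]$ and the lower bound is retained; this and the passage to the limit in the bilinear term are the two steps I expect to require the most care.
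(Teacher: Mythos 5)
Your proposal is correct and follows essentially the same route as the paper: uniform bounds from Proposition~\ref{4propUnifBounds} transferred by Fatou/lower semicontinuity along the Skorokhod a.s.\ convergence for (i)--(ii), the strong $\mathcal{C}^0(I,H^{1-\varepsilon})$ convergence combined with the $H^{-1}$ bilinear estimate of Lemma~\ref{4lemmaBilinear}~(\textit{iii}) for the nonlinear term in (iii), and for (iv) the same interpolation/H\"older derivation of the uniform lower bound followed by uniform integrability of $\|u\|_{\dot H^1}^2$ supplied by the exponential moment (which the paper implements concretely via the tail estimate~\eqref{4eq.ClaimLimit} and a cutoff $\chi_R$). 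The only cosmetic differences are your use of the weak (distributional) formulation rather than the paper's Duhamel formulation in (iii), and weak $L^2(\Omega\times I,H^{2+\delta})$ compactness rather than truncated bounded functionals plus monotone convergence in (i); neither changes the substance.
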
*
\begin{remark} The measure $\mu$ also depend on $\delta$, as it is obtained as a limit measures from $\mu_{\nu}=\mu_{\nu}^{(\delta)}$.  
\end{remark}

\begin{proof} 
(\textit{i}) Let $T>0$. Thanks to Proposition~\ref{4propUnifBounds} we have $\mathbb{E}_{\mu_{\nu}}[\|u\|^2_{L^2([0,T],H^{2+\delta})}] \leqslant C$ independently of $\nu$. Note that we also have, uniformly in $\nu$, $M$ and $K$:
\[\mathbb{E}_{\mu_{\nu_j}}\left[\min\left\{\|\mathbf{P}_{\leqslant M}u\|^2_{L^2([0,T],H^{2+\delta})},K\right\}\right] \leqslant C\,.\] 
Since the map map $u \mapsto \min \left\{\|\mathbf{P}_{\leqslant M}u\|^2_{H^{2+\delta}},K\right\}$ is continuous and bounded as a map $H^{2+\delta - \varepsilon} \to \mathbb{R}$ we can pass to the limit $j \to \infty$ by weak convergence of the measures and get 
\[
    \mathbb{E}_{\mu}\left[\min \left\{\| \mathbf{P}_{\leqslant M}u\|^2_{L^2([0,T],H^{2+\delta})},K\right\}\right] \leqslant C\,,
\]
and conclude by monotone convergence as $K \to \infty$ and then $M \to \infty$.  

(\textit{ii}) follows by a similar argument. 

(\textit{iv}) First, we claim that for any $j$ the following uniform inequalities hold:
\begin{equation}
    \label{4uniform}
    C_{\delta} \leqslant \mathbb{E}_{\mu_{\nu_j}}\left[\|u\|_{\dot H^1}^2\right]\leqslant \frac{\mathcal{B}_0}{2}
\end{equation}
In order to prove~\eqref{4uniform}, we observe that the right-hand side inequality is contained in Proposition~\ref{4propUnifBounds}~(\textit{i}). For the lower bound, we start by using the interpolation inequality
\[
    \|u\|_{\dot H^{1+\delta}}^2 \leqslant \|u\|_{\dot{H}^1}^{\frac{2}{1+\delta}}\|u\|_{\dot H^{2+\delta}}^{\frac{2\delta}{1+\delta}}\,,
\]
and by the Hölder inequality we infer 
\[
    \mathbb{E}_{\mu_{\nu_j}}[\|u\|_{\dot H^{1+\delta}}^2] \leqslant \mathbb{E}_{\mu_{\nu_j}}\left[\|u\|_{\dot{H}^1}^2\right]^{\frac{1}{1+\delta}}\mathbb{E}_{\mu_{\nu_j}}\left[\|u\|_{\dot H^{2+\delta}}^{2}\right]^{\frac{\delta}{1+\delta}}\,.
\]
The proof of the claim now follows from Proposition~\ref{4propUnifBounds} (\textit{i}) and (\textit{ii}). 

We claim that there exists a constant $C>0$ such that
\begin{equation}
    \label{4eq.ClaimLimit}
    \limsup_{j \geqslant 0} \int_{\|u\|_{\dot{H}^1}>R} \|u\|_{\dot{H}^1}^2\,\mu_{\nu_j}(\mathrm{d}u) \leqslant \frac{C}{R}\,.
\end{equation}
Assuming this claim we can finish the proof of (\textit{iv}). To this end, let $\chi$ be a smooth bump function, such that $\chi =1$ on $[-1,1]$, vanishing outside $[-2,2]$ and let $\chi_R=\chi(\cdot /R)$. Let $\varepsilon>0$ and $R>0$ be fixed. Then by \eqref{4eq.ClaimLimit} there exists $J=J(R)>0$ large enough such that for any $j \geqslant J$ there holds
\[ \int_{\|u\|_{\dot{H}^1} \leqslant R} \|u\|_{\dot H^1}^2\,\mu_{\nu_j}(\mathrm{d}u) \geqslant  C_{\delta}- \varepsilon\,.\]
Now observe that $\chi_{\frac{R}{2}}(\|u\|_{\dot H^1}) \leqslant \mathbf{1}_{\|u\|_{\dot H^1}>R}$ hence
\[ \int \chi_{\frac{R}{2}}(\|u\|_{\dot H ^1})\|u\|_{\dot H^1}^2\,\mu_{\nu_j}(\mathrm{d}u) \geqslant  C_{\delta}- \varepsilon\,.\]
Then passing to the limit $j \to \infty$, and because $u \mapsto \chi_{\frac{R}{2}}(\|u\|_{\dot H ^1})\|u\|_{\dot H^1}^2$ is a continuous bounded map $\dot H^1 \to \mathbb{R}$, we obtain
\[\int_{\|u\|_{\dot{H}^1} \leqslant R} \|u\|_{\dot H^1}^2\,\mu (\mathrm{d}u) \geqslant  C_{\delta}- \varepsilon \,,\]
and finally the monotone convergence theorem in the limit $R \to \infty$ proves~\eqref{4eq:encadrementC}. 

It remains to establish~\eqref{4eq.ClaimLimit}. In order to do so, we remark that the Cauchy-Schwarz inequality followed by the Markov inequality yields:
\begin{align*}
    \int_{\|u\|_{\dot{H}^1}>R} \|u\|_{\dot{H}^1}^2\,\mu_{\nu_j}(\mathrm{d}u) &\leqslant  \mu_{\nu_j} (\|u\|_{\dot{H}^1}>R)^{\frac{1}{2}} \left(\int_{\|u\|_{\dot{H}^1}>R} \|u\|_{\dot{H}^1}^4\,\mu_{\nu_j}(\mathrm{d}u)\right)^{\frac{1}{2}}\\
    & \leqslant \frac{1}{R} \mathbb{E}_{\mu_{\nu _j}}\left[\|u\|^2_{\dot{H}^1}\right]^{\frac{1}{2}} \mathbb{E}_{\mu_{\nu _j}}\left[\|u\|^4_{\dot{H}^1}\right]^{\frac{1}{2}}\,.
\end{align*}
Now remark that there is a constant $C=C(\gamma)>0$ such that for any real numbers $x$ there holds $x^4 \leqslant Ce^{\gamma x^2}$ so that we obtain
\begin{align*}
    \int_{\|u\|_{\dot{H}^1}>R} \|u\|_{\dot{H}^1}^2\,\mu_{\nu_j}(\mathrm{d}u) &\leqslant \frac{C}{R} \mathbb{E}_{\mu_{\nu _j}}\left[\|u\|^2_{\dot{H}^1}\right]^{\frac{1}{2}} \mathbb{E}_{\mu_{\nu _j}}\left[e^{\gamma\|u\|^2_{\dot{H}^1}}\right]^{\frac{1}{2}} \\
    &\leqslant \frac{C}{R}\,,
\end{align*}
where in the last line we used the uniform estimates of Proposition~\ref{4propUnifBounds}~(\textit{iii}). This establishes~\eqref{4eq.ClaimLimit}.  

(\textit{iii}) Invariance follows from the weak convergence of the measures. It remains to prove that~$u$ satisfies the Euler equation on any time-interval $[0,T]$. We start by writing that  
\begin{equation}
    \label{4eqNSnulimit}
    u_{\nu}(t)=e^{-\nu t L}u_0 + \int_0^t e^{-\nu (t-t')L}\left(B(u_{\nu} (t'),u_{\nu} (t'))+\sqrt{\nu}\zeta (t')\right)\,\mathrm{d}t'\,.
\end{equation}
As we want to pass to the limit in $L^2$, we use the previous uniform bounds. First, we have $e^{-t\nu L}u_0 \to u_0$ in $L^{\infty}((0,T),L^2)$ and also $u_{\nu} \to u$ in $L^{\infty}((0,T),L^2)$. Next, we use the triangle inequality to bound: 
\[\sqrt{\nu}\left\| \int_0^t e^{-\nu (t-t')L}\zeta (t')\,\mathrm{d}t'\right\|_{L^2} \leqslant \sqrt{\nu}\int_0^t \|\zeta(t')\|_{L^{2}}\,\mathrm{d}t'\,,\]
We have seen in the previous subsection that $\mathbb{E}[\|\zeta\|_{L^4((0,T),H^{1-2\varepsilon}}] < \infty$, which implies in particular that $\sqrt{\nu} \zeta \to 0$ in probability, in the space $L^4((0,T),L^2)$, thus up to passing to a sub-sequence this yields
\[\mathbb{P}\left(\lim_{j \to \infty} \sqrt{\nu_j}\zeta =0 \text{ in } L^4((0,T),L^2)\right)=1\,,\]
and then for $t \in [0,T]$:
\[\sqrt{\nu_j}\left\| \int_0^t e^{-\nu_j (t-t')L}\zeta (t')\,\mathrm{d}t'\right\|_{L^2}\leqslant T^{3/4}\|\sqrt{\nu_j}\zeta\|_{L^4((0,T),L^2)} \underset{j \to \infty}{\longrightarrow} 0\,.\]
To bound the remaining term, we use Lemma~\ref{4lemmaHeat} and write:
\begin{align*}
 \Big\lVert\int_0^t e^{-\nu(t-t')L}\left(B(u_{\nu}(t'),u_{\nu}(t'))- \right.& \left. B(u(t'),u(t'))\right)\,\mathrm{d}t'\Big\rVert_{L^2}\\ 
&\lesssim \int_0^T \frac{1}{\sqrt{t'}} \|B(u_{\nu}(t'),u_{\nu}(t'))-B(u(t'),u(t'))\|_{H^{-1}}\,\mathrm{d}t'\\
&\lesssim \|B(u_{\nu},u_{\nu})-B(u,u)\|_{L^{\infty}((0,T),H^{-1})}\\
& \lesssim \|u_{\nu}-u\|_{L^{\infty}((0,T),H^{\frac{1}{2}})}\|u_{\nu}\|_{L^{\infty}((0,T),H^{\frac{1}{2}})}\,.
\end{align*}
Since $(u_{\nu_j})_{j\geqslant 0}$ is convergent to $u$ (and therefore bounded) in $L^{\infty}((0,T),H^{1-\varepsilon})$ we obtain that for any $t \in [0,T]$:
\begin{align*}
 \left\|\int_0^t e^{-\nu_j(t-t')L}\left(B(u_{\nu_j}(t'),u_{\nu_j}(t'))- B(u(t'),u(t'))\right)\,\mathrm{d}t'\right\|_{L^2}\underset{j \to \infty}{\longrightarrow}0\,.
\end{align*}
Finally, the process $u$ almost surely satisfies $u \in \mathcal{C}^0(\mathbb{R}_+,H^1_{\operatorname{div}}) \cap  L^2_{\operatorname{loc}}(\mathbb{R}_+,H^{2+\delta})$ and obeys:
\[u(t)=u_0+\int_0^t \mathbf{P}\left(u(t')\cdot \nabla u(t')\right)\,\mathrm{d}t'\,,\]
where the equality holds in $\mathcal{C}^0(\mathbb{R}_+,L^2)$,
hence $u$ is a solution to the Euler equation~\eqref{4Euler2}.
\end{proof}

\section{Proof of the main results}\label{4sec5}

\subsection{About the classical local well-posedness theory for the Euler equations}

We will use that the local well-posedness time only depends on the size of the initial data. The following result is well-known:  

\begin{lemma}\label{4lemmaLocalEuler} Let $s>2$. Then the Euler equation~\eqref{4Euler2} is globally well-posed in $\mathcal{C}^0(\mathbb{R}_+,H^{s}(\mathbb{T}^2))$. Moreover, there exists $c>0$ such that if $u_0 \in H^s(\mathbb{T}^2)$ and $\tau \coloneqq \frac{c}{\|u_0\|_{H^s}}$, then there holds 
\[\|u(t)\|_{H^s} \leqslant 2 \|u_0\|_{H^s}\,,\]
for any $t \in [0,\tau]$. 
\end{lemma}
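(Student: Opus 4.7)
The global well-posedness in $\mathcal{C}^0(\mathbb{R}_+, H^s(\mathbb{T}^2))$ is already Wolibner's Theorem~\ref{4mainGlobalWolib}, so the real content is the quantitative doubling-time estimate. The plan is to run a standard $H^s$ energy estimate, reduce everything to an ODE of the form $y'\lesssim y^2$ for $y(t):=\|u(t)\|_{H^s}$, and then solve it to produce an explicit time of existence proportional to $1/\|u_0\|_{H^s}$.

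The first step is to apply $J^s\coloneqq\langle\nabla\rangle^s$ to the equation $\partial_t u+B(u,u)=0$ and pair with $J^s u$ in $L^2$. Since the Leray projector $\mathbf{P}$ commutes with $J^s$ (it is a Fourier multiplier) and $u$ is divergence-free, we have $\mathbf{P} J^s u = J^s u$, so
\[
\tfrac12\tfrac{\mathrm d}{\mathrm dt}\|u\|_{H^s}^2 = -\langle J^s(u\cdot\nabla u),J^s u\rangle_{L^2}.
\]
Writing $J^s(u\cdot\nabla u)=u\cdot\nabla J^s u+[J^s,u\cdot\nabla]u$, the first piece vanishes after integration by parts thanks to $\nabla\cdot u=0$, leaving only the commutator contribution. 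Step two invokes the Kato--Ponce commutator estimate
\[
\|[J^s,u\cdot\nabla]u\|_{L^2}\lesssim \|\nabla u\|_{L^\infty}\|u\|_{H^s},
\]
together with the Sobolev embedding $H^{s-1}(\mathbb{T}^2)\hookrightarrow L^\infty(\mathbb{T}^2)$, valid since $s-1>1=d/2$, yielding $\|\nabla u\|_{L^\infty}\lesssim \|u\|_{H^s}$. Combining these gives the differential inequality
\[
\tfrac{\mathrm d}{\mathrm dt}\|u\|_{H^s}^2 \leqslant C\|u\|_{H^s}^3, \qquad\text{i.e.,}\qquad \tfrac{\mathrm d}{\mathrm dt}\|u\|_{H^s}\leqslant C_0\|u\|_{H^s}^2.
\]

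The third step is purely the ODE comparison: if $y(t)\coloneqq\|u(t)\|_{H^s}$, integration of $y'\leqslant C_0 y^2$ yields $y(t)\leqslant y(0)/(1-C_0 t\,y(0))$ as long as the denominator stays positive. Choosing $c=\tfrac{1}{2C_0}$ and $\tau=c/\|u_0\|_{H^s}$ forces $1-C_0 t\,\|u_0\|_{H^s}\geqslant \tfrac12$ on $[0,\tau]$, and therefore $\|u(t)\|_{H^s}\leqslant 2\|u_0\|_{H^s}$ as claimed.

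Essentially no step is genuinely hard: the doubling estimate is classical and amounts to a clean bookkeeping of the Kato--Ponce estimate. The only point requiring some care is to justify that the above energy manipulation is rigorous (and not merely formal), which is done by performing the computation on a regularized solution $J^N u$ and then passing to the limit, a standard procedure in the $H^s$ theory of Euler; alternatively one invokes Wolibner's theorem to know a smooth solution exists and applies the estimate to it, the bound then propagating to general $H^s$ data by density and continuity of the flow map. The fact that the doubling time is proportional to $1/\|u_0\|_{H^s}$ (and not some smaller power) is exactly what the quadratic right-hand side $y^2$ produces, which is why the formulation in the statement is sharp for this argument.
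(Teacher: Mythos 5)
Your proposal is correct and follows essentially the same route as the paper: the same $H^s$ energy estimate $\frac{\mathrm{d}}{\mathrm{d}t}\|u\|_{H^s}^2\lesssim\|\nabla u\|_{L^\infty}\|u\|_{H^s}^2\lesssim\|u\|_{H^s}^3$ (via the commutator/Kato--Ponce bound and the embedding $H^{s-1}\hookrightarrow L^\infty$), followed by the same ODE comparison with $\tau\sim 1/(C\|u_0\|_{H^s})$. The only cosmetic difference is that you invoke Wolibner's theorem for globality, whereas the paper re-derives it from the Beale--Kato--Majda criterion together with conservation of $\|\xi\|_{L^\infty}$; both are valid.
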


\begin{proof} The local well-posedness statement follows from standard approximation arguments based on the following \textit{a priori} estimate:
\[
    \frac{\mathrm{d}}{\mathrm{d}t}\|u(t)\|^2_{H^s} \lesssim \|\nabla u(t)\|_{L^{\infty}}\|u(t)\|_{H^s}^2 \lesssim \|u(t)\|_{H^s}^3\,,
\]
where we used the Sobolev embedding $H^{s-1} \hookrightarrow L^{\infty}$. Integrating this inequality, there exists $C>0$ such that
\[\|u(t)\|_{H^s} \leqslant  \frac{\|u_0\|_{H^s}}{1-Ct\|u_0\|_{H^s}}\,,\]
as long as $1-Ct\|u_0\|_{H^s}>0$. Setting $\tau \coloneqq \frac{1}{2C\|u_0\|_{H^s}}$ yields the result. 

Finally the global well-posedness can be obtained using the blow-up criterion of Beale-Kato-Majda, which states that if we denote $T^*$ the maximal existence time, then: 
\begin{equation}
\label{4bkm}
    T^*<\infty \Longrightarrow \int_0^{T^*}\|\xi(t)\|_{L^{\infty}}\,\mathrm{d}t = \infty\,.
\end{equation}
For a proof, see~\cite{bkm}. Since the vorticity satisfies~\eqref{4EulerVorticity}, we see that $\xi$ is transported by the flow, thus its $L^{\infty}$ norm is conserved, so that if we assume that $T^*<\infty$, then 
\[\displaystyle \int_0^{T^*}\|\xi(t)\|_{L^{\infty}}\,\mathrm{d}t=T^*\|\xi_0\|_{L^{\infty}}< \infty\] which is a contradiction, thus $T^*=\infty$. 
\end{proof}

First we remark that thanks to the global well-posedness result for two-dimensional Euler equations we have the following result. 

\begin{proposition}\label{4propInvariant2D} The measure $\mu$ is invariant under the well-defined flow $\Phi_t : H^{2+\delta} \to H^{2+\delta}$ of~\eqref{4Euler2}. More precisely, for any $A \in \mathcal{B}(H^{2+\delta})$ and every $t>0$ there holds: \[\mu(\Phi_{-t}(A))=\mu(A)\,.\]
\end{proposition}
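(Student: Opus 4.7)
The plan is to exploit the stationary process $u$ already produced in Proposition~\ref{4propStationaryMeasures}: it has law $\mu$ at every time and almost surely solves~\eqref{4Euler2}. Combining this with the deterministic global well-posedness of Wolibner (Theorem~\ref{4mainGlobalWolib}) in $H^{2+\delta}$ and uniqueness, the identity $u(t)=\Phi_t(u(0))$ almost surely transfers stationarity of the process into invariance of the law under $\Phi_t$.

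More precisely, I would proceed in four steps. First, check that $\Phi_t$ is well-defined and Borel-measurable on $H^{2+\delta}$: global existence and uniqueness in $\mathcal{C}^0(\mathbb{R}_+,H^{2+\delta})$ is provided by Lemma~\ref{4lemmaLocalEuler} (since $2+\delta>2$), and continuous dependence on the initial data makes $\Phi_t$ continuous, hence Borel-measurable on $H^{2+\delta}$. Second, using Proposition~\ref{4propStationaryMeasures}~(\textit{i}) and (\textit{iii}), the process $u$ satisfies $u(0)\in H^{2+\delta}$ almost surely (because $\mu(H^{2+\delta})=1$) and $u\in \mathcal{C}^0(\mathbb{R}_+,H^{1})\cap L^2_{\operatorname{loc}}(\mathbb{R}_+,H^{2+\delta})$ is almost surely a solution to~\eqref{4Euler2}. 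Third, by uniqueness in the Wolibner class, on a set of full probability one has $u(t)=\Phi_t(u(0))$ for every $t\geqslant 0$; here one uses that the process and the deterministic flow both produce solutions with the same initial data in $H^{2+\delta}$, so the two must coincide. Fourth, by stationarity of $u$, for any Borel set $A\subset H^{2+\delta}$,
\[
\mu(A)=\mathbb{P}(u(t)\in A)=\mathbb{P}(\Phi_t(u(0))\in A)=\mathbb{P}(u(0)\in \Phi_{-t}(A))=\mu(\Phi_{-t}(A)),
\]
which is precisely the invariance claim.

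The delicate step is the third one: one must be sure that the almost-sure Euler solution $u$ produced by the weak compactness-Skorokhod limit is actually the \emph{unique} Wolibner solution with initial data $u(0)$, rather than merely \emph{a} solution among possibly many. This requires two observations. On the one hand, $u$ has trajectories almost surely in $L^2_{\operatorname{loc}}(\mathbb{R}_+,H^{2+\delta})$, in particular in $L^1_{\operatorname{loc}}(\mathbb{R}_+,W^{1,\infty})$, which is the class in which Wolibner's uniqueness holds; on the other hand, the exceptional null set where $u(0)\notin H^{2+\delta}$ or where $u$ fails to be a solution must be removed from $\Omega$ so that the identification $u(t)=\Phi_t(u(0))$ holds pointwise outside a $\mu$-negligible set.

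A minor auxiliary point, which I would address in passing, is Borel-measurability of $\Phi_{-t}(A)$: it follows from the continuity of $\Phi_t$ on $H^{2+\delta}$. With these items in hand, invariance reduces to the one-line computation displayed above. Note that the argument in fact yields $\mu$-almost-sure global well-posedness of~\eqref{4Euler2}, corresponding to part~(\textit{i}) of Theorem~\ref{4mainTheo2d3d}, at essentially no extra cost.
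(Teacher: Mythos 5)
Your proposal is correct and follows essentially the same route as the paper: reduce invariance to the $\mu$-almost-sure identification $u(t)=\Phi_t(u_0)$ via a weak--strong uniqueness statement, then conclude by stationarity. The only content you leave as a citation --- uniqueness of the limit process, which lives a priori only in $\mathcal{C}^0H^{1-\varepsilon}\cap L^2_{\operatorname{loc}}H^{2+\delta-\varepsilon}$ and not in the Wolibner class itself --- is exactly what the paper spells out, by a two-line Gr\"onwall estimate on $\|w\|_{L^2}^2$ with $w=u-\Phi_t(u_0)$, using $\|u\|_{H^{2+\delta-\varepsilon}}\in L^1_{\operatorname{loc}}$; you correctly identified both the delicate point and the regularity class in which it is resolved.
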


\begin{proof}
This result will follow from the stationarity of the process $u$ with respect to $\mu$ once we prove that $\mu$ almost-surely, $u(t)=\Phi_t(u_0)$. It is a weak-strong uniqueness statement. Let $u_0 \in H^1$. As we have already seen it, $\mu$-almost surely, $u_0\in H^{2+\delta}$, so that there is no loss of generality assuming that $u_0 \in H^{2+\delta}$. Then, let $v=\Phi_t(u_0)$ the global solution to~\eqref{4Euler2} associated to $u_0$ which is constructed by Lemma~\ref{4lemmaLocalEuler}. Let us denote by $u$ the solution constructed by Proposition~\ref{4propUnifBounds}, associated to the initial data $u_0$ and solution to~\eqref{4Euler2} in the class $\mathcal{C}^0H^{1-\varepsilon} \cap L^2_{\operatorname{loc}}H^{2+\delta - \varepsilon}$. Let $T>0$. We want to prove that $u=v$ on $[0,T]$. We introduce $w:=u-v$, and then we compute 
\[
    \frac{\mathrm{d}}{\mathrm{d}t} \|w(t)\|_{L^2}^2 =-2 (w\cdot \nabla u,w)_{L^2} \leqslant 2\|w\|_{L^2}^2\|u\|_{H^{2+\delta-\varepsilon}}\,,
\] 
where we used the properties of the bilinear form and the Sobolev embedding. Since we have $u \in L^2([0,T],H^{2+\delta-\varepsilon}) \subset L^1([0,T],H^{2+\delta-\varepsilon})$, and $w(0)=0$, the Grönwall lemma implies that~$w=0$ on $[0,T]$, hence the conclusion. 
\end{proof}

Before the proof of Theorem~\ref{4sideTheo2d}, let us remark that from the bounds on $\|u\|_{\dot H^1}$ and $\|u\|_{\dot H^{2+\delta}}$ we can infer the following interpolation inequality. 

\begin{lemma}\label{4interpolation} Let $\sigma \in (1,2+\delta]$, then: 
\[
    \mathbb{P}\left( \|u\|_{H^\sigma}>\lambda \right) \lesssim  \lambda^{-\frac{2(1+\delta)}{\sigma-1}}\log^{\beta (\sigma)} (\lambda)\,,
\]
with $\beta(\sigma)=\frac{2+\delta -\sigma}{\sigma -1}$ and where the implicit constant depends only on the constants $C_1, C_2$ and $\sigma$. 
\end{lemma}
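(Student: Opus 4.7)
The plan is to combine the two quantitative bounds on the invariant measure $\mu$ obtained in Proposition~\ref{4propStationaryMeasures}, namely the subgaussian tail for $\|u\|_{\dot H^1}$ (item \textit{(ii)}, via the exponential moment) and the $L^2(\mu)$ bound on $\|u\|_{\dot H^{2+\delta}}$ (item \textit{(ii)}), with a Sobolev interpolation between these two regularities and then to optimise a parameter.

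First I would write down the interpolation inequality
\[
\|u\|_{\dot H^\sigma} \leqslant \|u\|_{\dot H^1}^{\theta}\,\|u\|_{\dot H^{2+\delta}}^{1-\theta},
\qquad \sigma = \theta \cdot 1 + (1-\theta)(2+\delta),
\]
so that $\theta = \frac{2+\delta-\sigma}{1+\delta}$ and $1-\theta = \frac{\sigma-1}{1+\delta}$ (both lie in $(0,1)$ since $\sigma\in (1,2+\delta)$). Consequently, for any $A,B>0$ with $A^\theta B^{1-\theta} \geqslant \lambda$ one has the inclusion of events
\[
\{\|u\|_{\dot H^\sigma} > \lambda\} \;\subseteq\; \{\|u\|_{\dot H^1} > A\} \;\cup\; \{\|u\|_{\dot H^{2+\delta}} > B\},
\]
so that by a union bound
\[
\mu(\|u\|_{\dot H^\sigma} > \lambda) \leqslant \mu(\|u\|_{\dot H^1} > A) + \mu(\|u\|_{\dot H^{2+\delta}} > B).
\]

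Second, I would estimate each of the two terms with the ingredients already available. The exponential moment of Proposition~\ref{4propStationaryMeasures}\,\textit{(ii)} together with the Markov inequality applied to $\exp(\gamma \|u\|_{\dot H^1}^2)$ yields a subgaussian tail
\[
\mu(\|u\|_{\dot H^1} > A) \;\leqslant\; C\,e^{-\gamma A^2},
\]
while the $L^2(\mu)$ estimate $\mathbb{E}_\mu[\|u\|_{\dot H^{2+\delta}}^2]\leqslant \mathcal{B}_1$ together with the Chebyshev inequality gives the polynomial tail
\[
\mu(\|u\|_{\dot H^{2+\delta}} > B) \;\leqslant\; \frac{\mathcal{B}_1}{B^2}.
\]
Since $\|\cdot\|_{\dot H^s}$ and $\|\cdot\|_{H^s}$ are equivalent on the zero-mean sector, the same bound controls $\mu(\|u\|_{H^\sigma}>\lambda)$ up to a harmless multiplicative constant.

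Third, I would optimise the free parameter. Choose $A$ so that the subgaussian tail matches the target polynomial rate, i.e.
\[
A^2 \;=\; \frac{2}{\gamma(1-\theta)}\log\lambda,
\]
so that $e^{-\gamma A^2} = \lambda^{-2/(1-\theta)}$, and set $B = \lambda^{1/(1-\theta)} A^{-\theta/(1-\theta)}$ so that $A^\theta B^{1-\theta}=\lambda$. A direct computation then yields
\[
B^{-2} \;=\; \lambda^{-2/(1-\theta)}\,A^{2\theta/(1-\theta)} \;\lesssim\; \lambda^{-2/(1-\theta)}(\log\lambda)^{\theta/(1-\theta)},
\]
and since $2/(1-\theta) = \frac{2(1+\delta)}{\sigma-1}$ both terms in the union bound are controlled by the announced quantity $\lambda^{-2(1+\delta)/(\sigma-1)}\log^{\beta(\sigma)}(\lambda)$ with an explicit constant $\beta(\sigma)$ depending only on $\theta$. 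There is no real obstacle here; the only subtlety is the bookkeeping to make sure the logarithmic exponent matches the one stated, and to check that the implicit constants indeed depend only on $\sigma$, $C_1$ (the constant in the exponential moment) and $C_2$ (the bound on $\mathbb{E}_\mu[\|u\|_{\dot H^{2+\delta}}^2]$).
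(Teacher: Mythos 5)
Your proposal is correct and follows essentially the same route as the paper: interpolation between $\dot H^1$ and $\dot H^{2+\delta}$, a union bound over the two events, the subgaussian tail from the exponential moment, Chebyshev for the $H^{2+\delta}$ tail, and optimisation of the splitting parameter so that the two contributions balance. The logarithmic exponent you obtain, $\theta/(1-\theta)=\frac{2+\delta-\sigma}{\sigma-1}$, agrees with what the paper's own computation produces (which in fact differs from the $\beta(\sigma)=\frac{2+\delta-\sigma}{1+\delta}$ written in the statement, a discrepancy already present in the paper and harmless since the logarithm is a lower-order correction).
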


We postpone the proof of this technical estimate to the end of the section and proceed to the proof of the main theorems. First, note that Theorem~\ref{4mainTheo2d3d} (\textit{i}), (\textit{ii}) in the two-dimensional case is already proven since we have constructed an invariant measure $\mu$ in Section~\ref{4sec4} and its properties will be studied in Section~\ref{4sec6}. It remains to prove Theorem~\ref{4sideTheo2d} and Theorem~\ref{4mainTheo2d3d}~(\textit{iii}).    

\begin{proof}[Proof of Theorem~\ref{4sideTheo2d} (\textit{i})] Given that $\mu$ is an invariant measure for~\eqref{4Euler2}, we are going to prove the theorem using the same argument as in~\cite{bourgain}, explained in the introduction. Let $\sigma \in (1,s]$ where $s=2+\delta$. Let $T>0$ and $\varepsilon >0$. Let us define
\begin{equation}
    \label{eq.lambda}
    \lambda\coloneqq \lambda(\varepsilon, T)= (T\varepsilon^{-1})^{\frac{\sigma - 1}{3-\sigma + 2\delta}}\,.
\end{equation}

Lemma~\ref{4lemmaLocalEuler} provides us with $\tau=\frac{c_1}{\lambda}$ such that if $\|u_0\|_{H^{\sigma}} \leqslant \lambda$, then there is a unique local solution $u(t)$ to~\eqref{4Euler2} in $\mathcal{C}^0([0,\tau],H^{\sigma})$ obeying $\|u(t)\|_{H^{\sigma}} \leqslant 2\lambda$ for any $t \in [0,\tau]$. 

Let us consider
\[
    G_{\varepsilon, T}\coloneqq \bigcap _{n=0}^{\left\lfloor \frac{T}{\tau}\right\rfloor} \{u_0 \in H^{\sigma} \,, \Phi_{n \tau}(u_0) \in B_{\lambda}\}\,,
\]
where $B_{\lambda}=B_{H^{\sigma}}(0,\lambda)$. 

Thanks to the definition of $\tau$ we infer that for all $u_0\in G_{\varepsilon, T}$ there holds $\|u(t)\|_{H^{\sigma}}\leqslant 2\lambda$ for all $t \in [0,T]$. Then we compute, using the invariance of the measure on $H^s$:
\begin{align*}
    \mu\left(H^{\sigma}\setminus G_{\varepsilon, T}\right) &\leqslant \sum_{n=0}^{\left\lfloor \frac{T}{\tau}\right\rfloor } \mu \left(\Phi_{n \tau} \left(H^{\sigma} \setminus B_{\lambda}\right)\right) \\
    & \leqslant \sum_{n=0}^{\left\lfloor \frac{T}{\tau}\right\rfloor } \mu \left(H^{\sigma} \setminus B_{\lambda}\right) \\
    & \leqslant T \tau^{-1}\mu\left(H^{\sigma} \setminus B_{\lambda}\right)\,.
\end{align*}
We now use that $\tau^{-1}=\frac{1}{c_1}\lambda$, Lemma~\ref{4interpolation} to obtain and~\eqref{eq.lambda} to obtain 
\[
    \mu(H^{\sigma}\setminus G_{\varepsilon, T}) \leqslant  T\lambda^{\frac{{\sigma}-3-2\delta}{{\sigma}-1}} \log^{\beta ({\sigma})}\lambda \leqslant  \varepsilon \log^{\beta(\sigma)}(T\varepsilon^{-1})\,. 
\]

The remainder of the proof is now a Borel-Cantelli argument: set $T_n\coloneqq 2^n$ and $\varepsilon_n \coloneqq n^{-(2+\beta(\sigma))}$. Then consider 
\[
    G\coloneqq \bigcup_{n \geqslant 1} \bigcap_{k \geqslant n} G_{\varepsilon_k,T_k}\,,
\] 
which satisfies $\mu(G)=1$ because $\mu(H^{\sigma}\setminus G_{\varepsilon_n, T_n}) \leqslant C n^{-2}$, which is summable. Moreover, for any $u_0 \in G$ there is a $n_0=n_0(u_0, \sigma, s)$ such that we have $u_0 \in G_{n_0}$, which gives for all $k\geqslant n_0$ and all $t \in [T_{k-1}, T_k]$, 
\[
    \|u(t)\|_{H^{\sigma}} \leqslant 2 (T_k\varepsilon^{-1}_k)^{\frac{\sigma - 1}{3-\sigma +2\delta}} \leqslant C t ^{\frac{\sigma -1}{2s-\sigma -1}} (\log t)^{\frac{\beta(\sigma)(\sigma -1)}{2s-\sigma -1}} \leqslant C t ^{\frac{s- \sigma }{2s-\sigma -1}} (\log t) \leqslant C t^{\alpha} \,,
\]
for any $\alpha > \frac{\sigma - 1}{2s-1-\sigma}$. This concludes the proof. 
\end{proof}

\begin{proof}[Proof of Theorem~\ref{4sideTheo2d} (ii)] 
It is a direct consequence of (\textit{i}) and the use of the Sobolev embedding $H^{\sigma} \mapsto W^{2,\infty}$ as soon as $\sigma>3$: 
\[
    \|\nabla \xi (t)\|_{L^{\infty}} \lesssim \|\xi(t)\|_{H^{\sigma -1}} \lesssim \|u(t)\|_{H^{\sigma}}\,.\qedhere
\]
\end{proof}

\begin{proof}[Proof of Theorem~\ref{4mainTheo2d3d}] Part (\textit{i}) and (\textit{ii}) have already been proven. 

(\textit{iii}) Theorem~\ref{4mainCoro2d} implies in particular that such measures cannot have any atom: indeed, if $u^0$ is an atom, and $c\coloneqq \|u^0\|_{L^2}$ then $\mu\left(u \in H^{2+\delta}\text{ such that } \|u\|_{L^2} =c\right)>0$, but also 
\[\mu \left(u \in H^{2+\delta}\text{ such that } \|u\|_{L^2} \in (c-\varepsilon, c+\varepsilon)\right) \leqslant p(2\varepsilon) \underset{\varepsilon \to 0}{\longrightarrow}0\,,\]
which is a contradiction.  

(\textit{iv}) Let $k>0$. We claim that there exists a choice of the noise parameters $(\phi_n)_{n\in\mathbb{Z}^2_0}$ such that the associated invariant measure $\mu^{(k)}$ satisfies
\begin{equation}
    \label{4eq:noiseparamChoiceK}
    \mathbb{E}_{\mu^{(k)}}[\|u\|_{\dot H^1}^2] \sim k^{\alpha (\delta)}\,,
\end{equation}
for some $\alpha (\delta)>0$. Assuming~\eqref{4eq:noiseparamChoiceK}, one can finish the proof by letting 
$\mu \coloneqq \sum_{k \geqslant 1} \frac{\mu ^{(k)}}{2^k}$
which is an invariant probability measure by construction. Moreover, observe that for any $R>0$ we have $\mathbb{P}(\|u\|_{\dot{H}^{2+\delta}}^2 >R)>0$, completing the proof of (\textit{iv}). Indeed, if there exists $R_0>0$ such that $\mathbb{P}(\|u\|_{\dot{H}^{2+\delta}}^2 \leqslant R_0)=1$ then for $k>R_0$ we have $R_0^2 \geqslant \mathbb{E}_{\mu^k}[\|u\|_{\dot H^{2+\delta}}^2] \sim k^{\alpha (\delta)} \to \infty$
as $k \to \infty$, which is a contradiction. 

It remains to prove~\eqref{4eq:noiseparamChoiceK}. In view of~\eqref{4eq:encadrementC} the matter reduces to choosing the $\phi_n$ such that 
\[
    \frac{\mathcal{B}_0}{\mathcal{B}_1^{\frac{\delta}{1+\delta}}} \gtrsim_{\delta} K^{\varepsilon}\,,
\]
for some $\varepsilon>0$. Take $\alpha \geqslant \frac{\varepsilon(1-\delta)}{2} + \delta$ and define
\[
\phi_n= \left\{
\begin{array}{cc}
    e^{-n} & \text{ if } |n|<k \\
    \frac{k^{\alpha}}{n} & \text{ if } k\leqslant |n| \leqslant 2k\\
    e^{-(n-2k)} & \text{ if } |n|>2k\,.
\end{array}
\right.
\]
Observe that 
\begin{align*}
  \mathcal{B}_0 &= \sum_{|n|<k}|\phi_n|^2 + \sum_{k\leqslant |n| \leqslant 2k} |\phi_n|^2 + \sum_{|n|>2k} |\phi_n|^2 \\
  & = \mathcal{O}(1) + k^{2\alpha}\mathcal{O}(1) + \mathcal{O}(1) \\
  & \sim k^{2\alpha}\,,  
\end{align*}
and similarly $\mathcal{B}_1 \sim k^{2\alpha+2}$. Therefore, thanks to the assumption on $\alpha$ we deduce~\eqref{4eq:noiseparamChoiceK}. 
\end{proof}

To complete this paragraph we give a proof of Lemma~\ref{4interpolation}. 

\begin{proof}[Proof of Lemma~\ref{4interpolation}] 
We use an elementary argument : let $A>0$ be a parameter to be chosen later. Since for $\sigma\in (1,2+\delta]$ we have the interpolation inequality 
\[
    \|u\|_{H^\sigma} \leqslant \|u\|_{H^1}^{\theta}\|u\|_{H^{2+\delta}}^{1-\theta}
\] 
where $\theta\coloneqq \frac{2+\delta -\sigma}{1+\delta}$, we write that for any $\lambda >0$ and $A>0$: 
\[
    \{\|u\|_{H^\sigma} > \lambda \} \subset \{\|u\|_{H^1}>A^{-\frac{1}{\theta}}\lambda\} \cup \{\|u\|_{H^2}>A^{\frac{1}{1-\theta}}\lambda\}\,,
\]
so that the Markov inequality implies the bound
\begin{align*}
    \mathbb{P}(\|u\|_{H^\sigma}>\lambda)&\leqslant \mathbb{P}\left( e^{\gamma\|u\|_{H^1}^2}>\exp\left(\gamma A^{-\frac{2}{\theta}}\lambda^2\right)\right) + \mathbb{P}\left(\|u\|_{H^{2+\delta}}^2 > A^{\frac{2}{1-\theta}}\lambda^2\right) \\
    & \lesssim \exp\left(- \gamma A^{-\frac{2}{\theta}}\lambda^2\right) + A^{-\frac{2}{1-\theta}}\lambda^{-2}\,.
\end{align*}
Next, we want to optimise in $A$ in the right-hand side so that both terms have the same size, \textit{i.e.}, $\exp\left(-\gamma A^{-\frac{2}{\theta}}\lambda^2\right) \simeq A^{-\frac{2}{1-\theta}}\lambda^{-2}$, or taking the logarithm twice in this relation gives $\frac{2}{\theta} \log A \sim 2 \log \lambda$ and re-plugging in the previous relation suggests the choice $A\coloneqq \left(\frac{2}{\gamma(1-\theta)}\right)^{-\frac{\theta}{2}} \frac{}{}\lambda^{\theta}\log^{-\frac{\theta}{2}}(\lambda)$, leading to 
\[
    \mathbb{P}\left(\|u\|_{H^s}>\lambda \right) \lesssim \lambda^{-\frac{2}{1-\theta}}\log^{\frac{\theta}{1-\theta}}(\lambda)\,,
\]
which concludes the proof recalling the definition of $\theta$. 
\end{proof}

\section{Properties of the measures}\label{4sec6}

The goal of this section is to prove Theorem~\ref{4mainCoro2d}. Note that thanks to the Portmanteau theorem and the interior regularity of the Lebesgue measure, it is sufficient to prove the following proposition for the measures $\mu_{\nu}$. 

\begin{proposition}\label{4propGoodMeasures} The measures $(\mu_{\nu})_{\nu >0}$ satisfy the following properties. 
\begin{enumerate}[label=(\textit{\roman*})]
    \item There exists a constant $C>0$ independent of $\nu$ such that for all $\varepsilon>0$ there hods:
    \[\mu_{\nu} \left(\|u\|_{L^2}<\varepsilon\right) \leqslant C \varepsilon^{\frac{2(1+\delta)}{2+\delta}}\,.\] 
    \item There exists a continuous increasing function $p : \mathbb{R}_+ \to \mathbb{R}_+$ and a constant $C>0$ which does not depend on $\nu$ such that for every Borel set $\Gamma \subset \mathbb{R}_+$ there holds
    \[\mu_{\nu} \left(\|u\|_{L^2} \in \Gamma\right) \leqslant p(|\Gamma|)\,.\]
\end{enumerate}
\end{proposition}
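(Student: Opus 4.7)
The plan is to derive both estimates from moment identities for $\mu_\nu$ of Krylov--Has'minskii type. These are obtained by applying the Itô formula to functionals of the form $F(u)=f(\|u\|_{L^2}^2)$ along the $\mu_\nu$-stationary process $u_\nu(t)$ solving~\eqref{4eqNSv2}, and then taking expectation. The bilinear cancellation $\langle B(u,u),u\rangle_{L^2}=0$ from Lemma~\ref{4lemmaBilinear}~(\textit{i}) annihilates the nonlinear contribution to the drift, and stationarity kills the time derivative, leaving, with $Q(u):=\sum_{n\in\mathbb{Z}_0^2}|\phi_n|^2u_n^2$ and $u_n=(u,e_n)_{L^2}$, the identity
\begin{equation*}
    2\,\mathbb{E}_{\mu_\nu}\!\bigl[f'(\|u\|_{L^2}^2)\,\|u\|_{\dot H^{1+\delta}}^2\bigr] \;=\; \mathcal{B}_0\,\mathbb{E}_{\mu_\nu}\!\bigl[f'(\|u\|_{L^2}^2)\bigr] \;+\; 2\,\mathbb{E}_{\mu_\nu}\!\bigl[f''(\|u\|_{L^2}^2)\,Q(u)\bigr].
\end{equation*}
All factors of~$\nu$ cancel, so the identity is uniform in $\nu$, as required.

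For~(\textit{i}), I would specialise to $f(x)=(x+\varepsilon^2)^{-\alpha}$ with a small parameter $\alpha>0$. Using the elementary estimate $Q(u)\leq M\,\|u\|_{L^2}^2$ with $M:=\sup_n|\phi_n|^2<\infty$ (finite by the noise condition $\mathcal{B}_{3+\delta}<\infty$), the Itô-correction term gets absorbed into the left-hand side provided $\alpha$ is small enough, yielding
\begin{equation*}
    \mathbb{E}_{\mu_\nu}\!\bigl[(\|u\|_{L^2}^2+\varepsilon^2)^{-(\alpha+1)}\bigr] \;\lesssim\; \mathbb{E}_{\mu_\nu}\!\left[\frac{\|u\|_{\dot H^{1+\delta}}^2}{(\|u\|_{L^2}^2+\varepsilon^2)^{\alpha+1}}\right].
\end{equation*}
The Sobolev interpolation $\|u\|_{\dot H^{1+\delta}}\lesssim\|u\|_{L^2}^{1/(2+\delta)}\|u\|_{\dot H^{2+\delta}}^{(1+\delta)/(2+\delta)}$ combined with the uniform moment bound $\mathbb{E}_{\mu_\nu}[\|u\|_{\dot H^{2+\delta}}^2]=\mathcal{B}_1/2$ from Proposition~\ref{4propUnifBounds}, inserted through a Hölder inequality (and if necessary a bootstrap/iteration in the parameter $\alpha$), produces a bound of the form $\mathbb{E}_{\mu_\nu}[(\|u\|_{L^2}^2+\varepsilon^2)^{-(\alpha+1)}]\lesssim\varepsilon^{-2(\alpha+1)+2(1+\delta)/(2+\delta)}$, where the exponent $2(1+\delta)/(2+\delta)$ comes exactly from the interpolation profile. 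The Markov inequality $\mu_\nu(\|u\|_{L^2}<\varepsilon)\leq(2\varepsilon^2)^{\alpha+1}\mathbb{E}_{\mu_\nu}[\,\cdots\,]$ then concludes~(\textit{i}).

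For~(\textit{ii}) I would rerun the identity with $f'$ taken as a mollified indicator of the window $[a^2,(a+\eta)^2]$ for $a\geq 0,\eta>0$, so that $f''$ is a signed bump concentrated at the window's endpoints. Passing to the mollification limit then translates the identity into a bound on $\mu_\nu(\|u\|_{L^2}\in[a,a+\eta])$ which is linear in~$\eta$ (up to boundary contributions), with constants independent of $\nu$; combining this with~(\textit{i}) to absorb the region near zero and applying a standard outer-regularity/covering argument on Borel subsets of~$\mathbb{R}_+$ delivers the continuous modulus $p$ of the statement. The \emph{main obstacle} is rigorously controlling the Itô-correction $\mathbb{E}_{\mu_\nu}[f''(\|u\|_{L^2}^2)Q(u)]$ as $f''$ tends to a singular signed measure: this requires careful mollification, combined with the sharper bound $Q(u)\lesssim\|u\|_{\dot H^{1+\delta}}^2$ (via the decay of $(\phi_n)$ from $\mathcal{B}_{3+\delta}<\infty$) and the exponential moment $\mathbb{E}_{\mu_\nu}[e^{\gamma\|u\|_{\dot H^1}^2}]<\infty$ from Proposition~\ref{4propUnifBounds}~(\textit{iii}), to quantify the boundary contributions uniformly as the mollification scale shrinks. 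A secondary subtlety is the degeneracy of $Q(u)$ as $u\to 0$, which is precisely why~(\textit{i}) must be treated with the regularisation $+\varepsilon^2$ and cannot be recovered as a mere special case of~(\textit{ii}).
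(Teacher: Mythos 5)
Your starting identity (the stationary Itô/Krylov identity for $f(\|u\|_{L^2}^2)$) is correct and is indeed the engine behind the paper's proof, which packages it together with the occupation--time (Tanaka) formula for the scalar semimartingale $g(\|u(t)\|_{L^2}^2)$ in Proposition~\ref{4propFormula}. But the execution of part (\textit{i}) has two concrete gaps. First, with $f(x)=(x+\varepsilon^2)^{-\alpha}$ the identity reads $\mathcal{B}_0\,\mathbb{E}[(\cdot)^{-\alpha-1}]-2(\alpha+1)\,\mathbb{E}[(\cdot)^{-\alpha-2}Q(u)]=2\,\mathbb{E}[(\cdot)^{-\alpha-1}\|u\|_{\dot H^{1+\delta}}^2]$, and since $Q(u)\leqslant \sup_n|\phi_n|^2(\|u\|_{L^2}^2+\varepsilon^2)$ the absorption requires $2(\alpha+1)\sup_n|\phi_n|^2<\mathcal{B}_0$; this does not follow from $\mathcal{B}_{3+\delta}<\infty$ and cannot be arranged by taking $\alpha$ small (the limiting condition is $2\sup_n|\phi_n|^2<\mathcal{B}_0$, false e.g.\ if one mode dominates). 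The paper's choice $g(x)=x^{(1+\delta)/(2+\delta)}$ is engineered precisely so that the corresponding ratio $|g''(x)|x/g'(x)=\tfrac{1}{2+\delta}<\tfrac12$ makes the absorption automatic. Second, the interpolation step does not deliver the claimed exponent: writing $\|u\|_{\dot H^{1+\delta}}^2\leqslant\|u\|_{L^2}^{2/(2+\delta)}\|u\|_{\dot H^{2+\delta}}^{2(1+\delta)/(2+\delta)}$ and bounding $\|u\|_{L^2}^{2/(2+\delta)}(\|u\|_{L^2}^2+\varepsilon^2)^{-(\alpha+1)}\leqslant\varepsilon^{2/(2+\delta)-2(\alpha+1)}$ yields, after Markov, only $\mu_\nu(\|u\|_{L^2}<\varepsilon)\lesssim\varepsilon^{2/(2+\delta)}$, which is strictly weaker than $\varepsilon^{2(1+\delta)/(2+\delta)}$ for $\delta>0$ (the exponent $2(1+\delta)/(2+\delta)$ sits on the $\dot H^{2+\delta}$ factor, not on the $\varepsilon$-gain). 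The proposed bootstrap in $\alpha$ cannot rescue this: the H\"older step bounds $\mathbb{E}[(\cdot)^{-(\alpha+1)}]$ by $\mathbb{E}[(\cdot)^{-\beta}]^{1/(2+\delta)}$ with $\beta=(\alpha+1)(2+\delta)-1>\alpha+1$, so each iteration needs a \emph{more} singular moment as input and the recursion has no provable base case.

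For part (\textit{ii}) the plan correctly identifies the singular limit of $\mathbb{E}[f''(\|u\|_{L^2}^2)Q(u)]$ as the crux, but mollifying one window $[a^2,(a+\eta)^2]$ at a time asks for a pointwise density bound for the law of $\|u\|_{L^2}^2$, which is exactly what is not available; the moment and exponential bounds you cite cannot control that limit. The workable move, and the one the paper makes through Theorem~\ref{4theoLocalTime} and Proposition~\ref{4propFormula}, is to take $f_a(x)=(x-a)_+$ and \emph{integrate the resulting Tanaka identity over the level $a\in\Gamma$}: the singular term then becomes the occupation integral $\mathbb{E}\bigl[\mathbf{1}_{\Gamma}(\|u\|_{L^2}^2)\,Q(u)\bigr]$, which is bounded by $C|\Gamma|$ using only $\mathbb{E}_{\mu_\nu}[\|u\|_{\dot H^{1+\delta}}^2]\leqslant\mathcal{B}_0/2$. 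More importantly, your outline omits the step that converts this into a bound on $\mu_\nu(\|u\|_{L^2}\in\Gamma)$: one must bound $Q(u)$ \emph{from below} off an exceptional set, and since $Q(u)=\sum_n|\phi_n|^2u_n^2$ degenerates not only at $u=0$ but also on any $u$ concentrated at high frequencies, this requires combining part (\textit{i}) with the uniform $H^{2+\delta}$ moment (the set $\{\|u\|_{L^2}\leqslant\varepsilon\}\cup\{\|u\|_{\dot H^{2+\delta}}\geqslant\varepsilon^{-1/2}\}$ in the paper), producing the lower bound $Q(u)\geqslant\kappa(\varepsilon)$ on the complement and hence the nonexplicit modulus $p$. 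Your cited inequality $Q(u)\lesssim\|u\|_{\dot H^{1+\delta}}^2$ is an upper bound and points in the wrong direction for this purpose.
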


The proof will heavily rely on Proposition~\ref{4propFormula} whose proof is given in Appendix~\ref{4appA}. The proof is taken from~\cite{kuksin} with some minor modifications designed to adapt it to the case of the article.   

\begin{proof}[Proof of Proposition~\ref{4propGoodMeasures}] We start with (\textit{i}). First, we prove that
\begin{equation}
    \label{4eqLebesgue1}
    \mu \left(\{u \in L^2, \; 0<\|u\|_{L^2} \leqslant \varepsilon\}\right) \lesssim \varepsilon^{\frac{2(1+\delta)}{2+\delta}}\,.
\end{equation}

We let $u(t)=u_{\nu}(t)$ be a stationary process for $\mu_{\nu}$ satisfying~\eqref{4eqNSv2}. We apply Proposition~\ref{4propFormula} to $\Gamma = [\alpha, \beta]$ where $\alpha >0$ and $g \in \mathcal{C}^2(\mathbb{R})$ is a function such that $g(x)=x^{\frac{1+\delta}{2+\delta}}$ for $x \geqslant \alpha$ and vanishes for $x \leqslant 0$. This results in 
\begin{align*}
    \mathbb{E}&\int_{\alpha}^{\beta} \mathbf{1}_{(a,\infty)}\left(\|u\|_{L^2}^{\frac{2(1+\delta)}{2+\delta}}\right) \left(\frac{1+\delta}{2+\delta}\frac{\mathcal{B}_0/2-\|\nabla^{1+\delta} u\|_{L^2}^2}{\|u\|_{L^2}^{\frac{2}{2+\delta}}} - \frac{(1+\delta)\displaystyle\sum_{n\in \mathbb{Z}_0^2} |\phi_n|^2|u_n|^2}{(2+\delta)^2\|u\|_{L^2}^{\frac{2(3+\delta)}{2+\delta}}}\right) \,\mathrm{d}a \\
    & + C(\delta) \sum_{n \in \mathbb{Z}_0^2} |\phi_n|^2 \mathbb{E}\left[\mathbf{1}_{[\alpha,\beta]}\left(\|u\|_{L^2}^{\frac{2(1+\delta)}{2+\delta}}\right)\|u\|_{L^2}^{-\frac{4}{2+\delta}}u_n^2\right]=0\,.
\end{align*}
In particular this gives
\[\mathbb{E}\int_{\alpha}^{\beta} \mathbf{1}_{(a,\infty)}\left(\|u\|_{L^2}^{\frac{2(1+\delta)}{2+\delta}}\right) \left(\frac{1+\delta}{2+\delta}\frac{\mathcal{B}_0/2-\|\nabla^{1+\delta} u\|_{L^2}^2}{\|u\|_{L^2}^{\frac{2}{2+\delta}}} - \frac{(1+\delta)\displaystyle\sum_{n \in \mathbb{Z}_0^2}|\phi_n^2||u_n^2|}{(2+\delta)^2\|u\|_{L^2}^{\frac{2(3+\delta)}{2+\delta}}}\right) \,\mathrm{d}a\leqslant 0\,,\]
which gives
\begin{equation}
    \label{4eqMajo}
    \mathbb{E}\int_{\alpha}^{\beta} \frac{\mathbf{1}_{(a,\infty)}\left(\|u\|_{L^2}^{\frac{2(1+\delta)}{2+\delta}}\right)}{\|u\|_{L^2}^{\frac{2(3+\delta)}{2+\delta}}} \left(\mathcal{B}_0(1+\frac{\delta}{2})\|u\|_{L^2}^2-\sum_{n\in\mathbb{Z}_0^2} |\phi_n|^2u_n^2\right)\,\mathrm{d}a\lesssim (\beta - \alpha) \mathbb{E}\left[\frac{\|u\|_{\dot H^{1+\delta}}^2}{\|u\|_{L^2}^{\frac{2}{2+\delta}}}\right]\,.
\end{equation}
Observe that by interpolation 
\[\mathbb{E}\left[\frac{\|u\|_{\dot H^{1+\delta}}^2}{\|u\|_{L^2}^{\frac{2}{2+\delta}}}\right] \lesssim \mathbb{E}\left[\|u\|_{\dot H^{2+\delta}}^{\frac{2(1+\delta)}{2+\delta}}\right]\]
and 
\[
    \mathcal{B}_0(1+\frac{\delta}{2})\|u\|_{L^2}^2-\sum_{n\in\mathbb{Z}_0^2} |\phi_n|^2u_n^2 \geqslant \frac{\delta \mathcal{B}_0}{2}\|u\|_{L^2}\,.
\]
Plugging it into~\eqref{4eqMajo} provides:
\[\mathbb{E}\left[\int_{\alpha}^{\beta}\mathbf{1}_{(a,\infty)} \left(\|u\|_{L^2}^{\frac{2(1+\delta)}{2+\delta}}\right)\|u\|_{L^2}^{-\frac{2}{2+\delta}} \,\mathrm{d}a\right] \lesssim (\beta-\alpha) \lesssim \beta\,,\]
with constants independent of $\nu$. Then for any $\varepsilon >\beta$ one has 
\begin{align*}
    \mathbb{E}\left[\int_{\alpha}^{\beta} \mathbf{1}_{(a,\infty)}\left(\|u\|_{L^2}^{\frac{2(1+\delta)}{2+\delta}}\right)\|u\|_{L^2}^{-\frac{2}{2+\delta}} \, \mathrm{d}a\right] &\geqslant \varepsilon ^{-\frac{2}{2+\delta}} \mathbb{E}\left[\int_0^{\beta} \mathbf{1}_{(a,\varepsilon)}\left(\|u\|_{L^2}^{\frac{2(1+\delta)}{2+\delta}}\right)\,\mathrm{d}a\right] \\
    &= \varepsilon ^{-\frac{2}{2+\delta}} \int_0^{\beta} \mathbb{P}\left(a^{\frac{2+\delta}{2(1+\delta)}}< \|u\|_{L^2} < \varepsilon ^{\frac{2+\delta}{2(1+\delta)}}\right)\,\mathrm{d}a\,.
\end{align*}
so that finally,
\[\frac{1}{\beta}\int_0^{\beta} \mathbb{P}\left(a^{\frac{2+\delta}{2+2\delta}}< \|u\|_{L^2} < \varepsilon ^{\frac{2+\delta}{2+2\delta}}\right) \,\mathrm{d}a \lesssim \varepsilon^{\frac{2}{2+\delta}}\,,\]
and then passing to the limit $\beta \to 0$, one gets \eqref{4eqLebesgue1}.  

In order to complete the proof of (\textit{i}) we need to prove that $\mu_{\nu}$ does not have any atom at $0$. We refer to~\cite{shirikyan11} for details of this proof and also~\cite{kuksin} for an alternative proof. Here the dependence on $\nu$ is harmless, hence we drop the subscripts. Let $\mu_j$ be the law of the random variable $u_j(t)\coloneqq (u(t), e_j)_{L^2}$. We will prove that $\mu_j$ has no atom at zero, for any $j$, thus proving the desired fact. The Itô formula reads:
\[
    u_j(t)-u_j(0)=\int_0^th_j(s)\,\mathrm{d}s+\sqrt{\nu} \sum_{n \in \mathbb{Z}_0^2}\phi_n \mathrm{d}\beta _n\,,
\]
where $h_j(s)=(e_j,-B(u(s),u(s))+\nu Lu(s))_{L^2}$. Now observe that an application of Proposition~\ref{4propFormula} gives the existence of a random time $\Lambda_t(a)$ satisfying 
\[
    \Lambda_t(a)=|u_j(t)-a|-|u_j(0)-a|-\int_0^t \mathbf{1}_{(a,\infty)}(u_j(s))h_j(s)\,\mathrm{d}s - \sqrt{\nu} \sum_{n \in \mathbb{Z}_0^2} \phi_n \int_0^t \mathbf{1}_{(a,\infty)}(u_j(s))\,\mathrm{d}\beta_n(s)\,.
\]
Taking the expectation and using invariance yields $\mathbb{E}[\Lambda_t(a)]=-t\mathbb{E}[\mathbf{1}{(a,\infty)}(u_j(0))h_j(0)]$ and Theorem~\ref{4theoLocalTime} implies 
\[2\int_{\Gamma} \mathbb{E}\left[\Lambda_t(a)\right] \,\mathrm{d}a=\sqrt{\nu}\mathcal{B}_0\int_0^t \mathbb{E}\left[\mathbf{1}_{\Gamma}(u_j(s))\right]\,\mathrm{d}s=\sqrt{\nu}t\mathcal{B}_0 \mathbb{P}((u,e_j)_{L^2} \in \Gamma)\,.\]
Combining these two identities and remarking that $|h_j(0)|\lesssim |j|\left(\|u\|_{\dot H^{2+\delta}}^2+1\right)$ 
we see that $h_j(0)$ is of finite expectation thus $\mathbb{P}(\langle u, e_j\rangle_{L^2} \in \Gamma) \lesssim \nu ^{-1/2} \ell (\Gamma) \mathbb{E}[|h_j(0)|]$. 
This finishes the proof that there is no atom at zero for the measure $\mu_{\nu}$. Combined with the first part this gives statement (\textit{i}) for $\mu_{\nu}$. 

It remains to prove (\textit{ii}). Applying Proposition~\ref{4propFormula} to $g(x)=x$ and taking the expectation immediately gives 
\[
    \mathbb{E}\left[\mathbf{1}_{\Gamma} (\|u\|_{L^2}^2) \sum_{n \in \mathbb{Z}_0^2}|\phi_n|^2|u_n|^2\right] \leqslant \int_{\Gamma} \mathbb{E}\left[\mathbf{1}_{(a,\infty)} (\|u\|_{\dot H^{1+\delta}}^2)\|u\|_{L^2}^2\right]\,\mathrm{d}a\,,
\]
and observe that using the bound for $\mathbb{E}_{\mu_{\nu}}[\|u\|^2_{\dot H^{2+\delta}}] = \mathcal{B}_1$ we infer the bound:
\[\int_{\Gamma} \mathbb{E}\left[\mathbf{1}_{(a,\infty)} (\|u\|_{L^2}^2)\|\nabla ^{1+\delta}u\|_{L^2}^2\right]\,\mathrm{d}a \lesssim |\Gamma|\,.\]
For any $N\geqslant 1$, we introduce $\tilde{\phi}_N \coloneqq \min \{|\phi_n|, |n|\leqslant N\}$. Then we have:
\begin{align*}
    \sum_{n \in \mathbb{Z}_0^2} |\phi_n|^2|u_n|^2 & \geqslant \tilde{\phi}_N^2 \sum_{0<|n|\leqslant N} |u_n|^2 \\
    & = \tilde{\phi}_N^2 \left(\|u\|^2_{L^2}-\sum_{|n|>N}|u_n|^2\right) \\
    & \geqslant \tilde{\phi}_N^2 \left(\|u\|^2_{L^2}-N^{-2(2+\delta)} \|u\|_{\dot H^{2+\delta}}^2\right)\,,
\end{align*}
Let $\varepsilon >0$ and remark that if $\|u\|_{L^2} \geqslant \varepsilon$ and $\|u\|_{\dot H^{2+\delta}} \leqslant \varepsilon ^{-1/2}$ we have 
\[
    \sum_{n \in \mathbb{Z}_0^2} |\phi_n|^2|u_n|^2 \geqslant \tilde{\phi}_N^2(\varepsilon ^2-N^{-2(2+\delta)}\varepsilon ^{-1})\,.
\]
Let us choose $N\coloneqq N(\varepsilon) \coloneqq \left(\frac{\varepsilon^3}{2}\right)^{-\frac{1}{2(2+\delta)}}$ to get 
$\sum_{n \in \mathbb{Z}_0^2} |\phi_n|^2|u_n|^2 \geqslant \frac{1}{2}\varepsilon\tilde{\phi}_{N(\varepsilon)}^2 =: \kappa(\varepsilon)$
where $\kappa (\varepsilon)$ goes to $0$ as $\varepsilon \to 0$. Indeed $N(\varepsilon) \to \infty$ since $(\phi_n)_{n\in\mathbb{Z}^2}$ is summable.  We introduce the set 
$ \Omega_{\varepsilon}:=\{v \in L^2, \text{such that } \|v\|_{L^2} \leqslant \varepsilon \text{ or } \|v\|_{\dot H^{2+\delta}} \geqslant \varepsilon ^{-\frac{1}{2}}\}$
and remark that 
\[
    \mathbb{P}(\Omega_{\varepsilon}) \leqslant \mathbb{P}(\|u\|_{L^2} \leqslant \varepsilon) + \mathbb{P}(\|\nabla^{2+\delta} u\|_{L^2} > \varepsilon ^{-1/2}) \lesssim \varepsilon \,,
\]
thanks to the Markov inequality and (\textit{i}). Then we decompose the set $\{\|u\|_{L^2} \in \Gamma\}$ on $\Omega_{\varepsilon}$ and $\Omega_{\varepsilon}^c$ to obtain the estimate
\begin{align*}
    \mathbb{P}\left(\|u\|_{L^2} \in \Gamma\right) &= \mathbb{P}\left(\{\|u\|_{L^2} \in \Gamma\} \cap \Omega_{\varepsilon}\right) + \mathbb{P}\left(\{\|u\|_{L^2} \in \Gamma\} \cap \Omega_{\varepsilon}^c\right) \\
    & \lesssim \mathbb{P}(\Omega_{\varepsilon}) + \kappa(\varepsilon)^{-1}\mathbb{E}\left[\mathbf{1}_{\Gamma}\left(\|u\|_{L^2} \sum_{n \in \mathbb{Z}_0^2} |\phi_n|^2|u_n|^2\right)\right] \\
    & \lesssim \varepsilon + \kappa(\varepsilon)^{-1} |\Gamma|\,.
\end{align*}
Newt, we take $\varepsilon := \ell (\Gamma)$ if $\ell(\Gamma) \neq 0$ so that $\mathbb{P}\left(\|u\|_{L^2} \in \Gamma\right) \leqslant p(|\Gamma|)$ where $p(r)=C\left(r+\kappa (r)^{-1}r\right)$. Extend $p$ by $p(0)=0$ and remark that if $ |\Gamma|=0$ then we have $\mathbb{P}\left(\|u\|_{L^2} \in \Gamma\right) \lesssim \varepsilon \to 0$
which is coherent with the definition of $p$ at zero. The proof will be complete when we check that $p$ defines indeed a continuous increasing function. It is sufficient to prove that $\kappa$ defines a decreasing function, which can be seen directly on the definition of $\kappa$ and can be made continuous up to some minor modification.  
\end{proof}
\appendix

\section{Tools from stochastic analysis}
\label{4appA}

This appendix gathers some details about Itô formulas and local times for martingales.

\subsection{About Itô formulas in infinite dimension}

In this section we explain how we have applied the Itô formula without mentioning the hypotheses in the previous sections. The version of the Itô formula that we use is due to Shirikyan~\cite{shirikyan}, see also~\cite{kuksin}, Chapter~7 for a textbook presentation.  

\begin{definition}\label{4itoDef} Let us consider a Gelfand triple $(V^*,H,V)$ and a probability space $(\Omega, \mathcal{F},\mathbb{P})$. Let $(\mathcal{F}_t)_t$ be the filtration associated to identically distributed independent Brownian motions $(\beta_n(t))_{n\in\mathbb{Z}}$. Let $(e_n)_{n\in\mathbb{Z}}$ be a Hilbertian basis of $H$ and $\phi : H \to H$ a linear map. Let $y(t)$ be a $\mathcal{F}_t$ progressivly measurable process which writes
\[y(t)=y(0)+ \int_0^t x(s)\,\mathrm{d}s + \sum _{n\in\mathbb{Z}} \phi(e_n)\beta_n (t)\,.\]
We assume that $u \in \mathcal{C}^0(\mathbb{R}_+,H) \cap L^{2}_{\operatorname{loc}}(\mathbb{R}_+,V)$, $x$ is almost surely in $L^2_{\operatorname{loc}}(\mathbb{R}_+,V^*)$ and \[\sum_{n\in\mathbb{Z}} \|\phi(e_n)\|_H^2 < \infty\,.\]
Such a process is called a \textit{standard Itô process}. 
\end{definition}

\begin{proposition}[Itô formula]\label{4ito} Let $y(t)$ be a \textit{standar Itô process} as in Definition~\ref{4itoDef}. Let $F : H \to \mathbb{R}$ be twice differentiable and uniformly continuous on bounded subsets. Assume also that:
\begin{enumerate}[label=(\textit{\roman*})]
    \item Let $T>0$ and assume that there exists a continuous function $K_T$ such that for all $u\in V, v\in V^*$ there holds
    \[|\mathrm{d}F(u;v)|\leqslant K_T(\|u\|_H)\|u\|_V\|v\|_{V^*}\,.\] 
    \item If $w_k \to w$ in $V$ and $v \in V^*$ then $\mathrm{d}F(w_k;v) \to \mathrm{d}F(w;v)$. 
\end{enumerate}
Then there holds:
\[
    \mathbb{E}[F(y(t))]=\mathbb{E}[F(y(0))] + \int_0^t\mathbb{E}\left[\mathrm{d}F(y(s);x(s))+\frac{1}{2}\sum_{n \in \mathbb{Z}^2}\mathrm{d}^2F(y(t);\phi(e_n),\phi(e_n))\right]\,\mathrm{d}t\,.
\]
\end{proposition}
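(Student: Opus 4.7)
The plan is to reduce to the classical finite-dimensional It\^o formula by a double approximation: a Galerkin projection in the Hilbert basis $(e_n)$ plus a temporal mollification of the drift. Fix $T>0$ and let $P_N$ denote the orthogonal projection of $H$ onto $\operatorname{Span}(e_1,\dots,e_N)$. Let $\rho_\varepsilon$ be a smooth approximation of the Dirac mass on $\mathbb{R}$, extend $x$ by zero outside $[0,T]$, and set $x_\varepsilon\coloneqq\rho_\varepsilon*x$. Introduce
\[
y_{N,\varepsilon}(t)\coloneqq P_Ny(0)+\int_0^tP_Nx_\varepsilon(s)\,\mathrm{d}s+\sum_{n\le N}\phi(e_n)\beta_n(t),
\]
which is an $N$-dimensional semimartingale with $\mathcal{C}^1$ drift and finite-dimensional noise, taking values in $P_NH$. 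The classical $\mathbb{R}^N$-valued It\^o formula applied to $F$ restricted to $P_NH$ yields
\[
F(y_{N,\varepsilon}(t))=F(y_{N,\varepsilon}(0))+\int_0^t\mathrm dF(y_{N,\varepsilon}(s);P_Nx_\varepsilon(s))\,\mathrm{d}s+\tfrac12\int_0^t\sum_{n\le N}\mathrm d^2F(y_{N,\varepsilon}(s);\phi(e_n),\phi(e_n))\,\mathrm{d}s+M_{N,\varepsilon}(t),
\]
with $M_{N,\varepsilon}$ a local martingale. Localising by the stopping times $\tau_R\coloneqq\inf\{t\ge 0:\|y(t)\|_H+\int_0^t\|x(s)\|_{V^*}^2\,\mathrm{d}s\ge R\}$ turns $M_{N,\varepsilon}(\cdot\wedge\tau_R)$ into a true martingale of zero expectation, so taking $\mathbb{E}$ gives the identity at time $t\wedge\tau_R$ without the martingale term.

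Then I would pass to the limit $\varepsilon\to 0$ with $N$ fixed. Since $P_NH$ is finite dimensional, $P_Nx_\varepsilon\to P_Nx$ in $L^2((0,T),P_NH)$ almost surely, hence $y_{N,\varepsilon}\to y_N\coloneqq P_Ny$ uniformly on $[0,T]$ almost surely. Hypothesis (\textit{ii}) yields pointwise convergence $\mathrm dF(y_{N,\varepsilon}(s);P_Nx_\varepsilon(s))\to\mathrm dF(y_N(s);P_Nx(s))$, while hypothesis (\textit{i}) provides the bound
\[
|\mathrm dF(y_{N,\varepsilon}(s);P_Nx_\varepsilon(s))|\le K_T(\|y_{N,\varepsilon}(s)\|_H)\|y_{N,\varepsilon}(s)\|_V\|P_Nx_\varepsilon(s)\|_{V^*},
\]
which is uniformly dominated in $L^1(\Omega\times(0,T\wedge\tau_R))$ by Cauchy--Schwarz, using $y\in L^2_{\operatorname{loc}}(\mathbb{R}_+,V)$ and $x\in L^2_{\operatorname{loc}}(\mathbb{R}_+,V^*)$ together with the $H$-bound enforced by $\tau_R$. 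The second-order integrand is a finite sum of bounded terms (by a similar bound on $\mathrm d^2F$ inherited from uniform continuity), and dominated convergence produces the identity with $y_{N,\varepsilon}$ replaced by $y_N$.

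The last step is the limit $N\to\infty$. By the Galerkin property of $(e_n)$, one has $y_N(s)\to y(s)$ in $V$ and $P_Nx(s)\to x(s)$ in $V^*$ for almost every $s$, so hypothesis (\textit{ii}) delivers pointwise convergence of the drift integrand to $\mathrm dF(y(s);x(s))$. Domination relies on $\|P_Nz\|_{V^*}\le\|z\|_{V^*}$ and $\|P_Nz\|_V\le\|z\|_V$, valid when $(e_n)$ diagonalises the Riesz isomorphism $V\to V^*$ (which is the case of the $(-\Delta)$-eigenbasis used throughout the paper). For the second-order series, uniform continuity of $F$ on bounded sets implies a bound $|\mathrm d^2F(u;h,h)|\lesssim C_T(\|u\|_H)\|h\|_H^2$, so the hypothesis $\sum_n\|\phi(e_n)\|_H^2<\infty$ provides a tail estimate uniform in $N$, and Fubini together with hypothesis (\textit{ii}) justifies interchanging the sum with the time integral in the limit. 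Letting $\tau_R\uparrow\infty$ via monotone/dominated convergence concludes. The main obstacle is precisely this last domination step: verifying that $\|y_N(s)\|_V$ and $\|P_Nx(s)\|_{V^*}$ are controlled uniformly in $N$ requires compatibility between the basis and the Gelfand triple, and for a general triple the dominated convergence argument would have to be replaced by a weak-convergence and lower-semicontinuity argument.
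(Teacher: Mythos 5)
The paper does not actually prove this proposition: its ``proof'' is a one-line citation to Theorem~7.7.5 and Corollary~7.7.6 of Kuksin--Shirikyan. Your Galerkin-plus-mollification scheme is essentially the standard proof of that cited theorem, so in substance you are reconstructing the argument the paper outsources. The outline is sound: finite-dimensional It\^o formula for $y_{N,\varepsilon}$, localisation to kill the martingale term, then $\varepsilon\to 0$ and $N\to\infty$ by dominated convergence, with the compatibility of the eigenbasis with the Gelfand triple (so that $P_N$ is a contraction on $V$ and $V^*$) correctly identified as the point that makes the domination work.

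Two steps deserve more care. First, the bound $|\mathrm{d}^2F(u;h,h)|\lesssim C(\|u\|_H)\|h\|_H^2$ does \emph{not} follow from uniform continuity of $F$ itself on bounded sets; no hypothesis of the proposition as stated controls $\mathrm{d}^2F$ at all. What is needed (and what Kuksin--Shirikyan actually assume, and what is implicitly intended here by ``twice differentiable and uniformly continuous on bounded subsets'') is that $\mathrm{d}^2F$ is uniformly continuous, hence bounded, on bounded subsets of $H$; you should state this as the hypothesis you are using rather than derive it from continuity of $F$. Without it the convergence of the series $\sum_n\mathrm{d}^2F(\,\cdot\,;\phi(e_n),\phi(e_n))$ and the zero-expectation of the martingale part cannot be justified. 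Second, your stopping time $\tau_R$ controls $\|y(t)\|_H$ and $\int_0^t\|x\|_{V^*}^2$, but the first-order bound in hypothesis (\textit{i}) involves $\|y(s)\|_V$, so the domination of the drift integrand (and the square-integrability of the martingale integrand uniformly in $N$) requires adding $\int_0^t\|y(s)\|_V^2\,\mathrm{d}s$ to the definition of $\tau_R$; this costs nothing since $y\in L^2_{\operatorname{loc}}(\mathbb{R}_+,V)$ almost surely. Finally, in the limit $N\to\infty$ of the drift term both arguments of $\mathrm{d}F$ vary simultaneously; hypothesis (\textit{ii}) only gives convergence in the first slot, so you should split $\mathrm{d}F(y_N;P_Nx)-\mathrm{d}F(y;x)$ into a piece handled by (\textit{ii}) and a piece $\mathrm{d}F(\,\cdot\,;P_Nx-x)$ handled by linearity and (\textit{i}). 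These are repairs, not obstructions: with them the argument closes.
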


\begin{proof} This is a combination of Theorem~7.7.5 and the proof of Corollary~7.7.6 in~\cite{kuksin}. 
\end{proof}

The Itô formula can be applied to the process $u_{\nu}(t)$ of Section~\ref{4sec3} as a Markovian system with Gelfand triplet $(V^*,H,V):=(H^{-\delta},H^{1},H^{2+\delta})$. The functional $F(u)=\|u\|^2_{\dot{H}^1}$ is such that $\mathrm{d}F(\cdot ; \cdot)$ is continuous on $V^* \times V $. We also compute $\mathrm{d}^2F(u;v,v)=2\|v\|^2_{H^1}$, and taking into account that $\left(B(u,u),u\right)_{H^1}=0$; which is the key cancellation in dimension $2$, at the $H^1$ regularity, the Itô formula reads
\[
    \mathbb{E}\left[\|u_{\nu}(t)\|^2_{\dot H^1}\right] - \mathbb{E}\left[\|u_{\nu}(0)\|^2_{\dot H^1}\right] = \nu\int_0^t \left( \mathcal{B}_1 - 2\mathbb{E}\left[\|u_{\nu}(t')\|^2_{\dot H^{2+\delta}}\right]\right) \,\mathrm{d}t'\,.
\]

Similarly we can apply the Itô formula to $G(u):=e^{\gamma \|u\|_{\dot H^1}^2}$ in dimension $2$. We have $\mathrm{d}G(u;v)=2 \gamma \langle u,v\rangle_{H^1}e^{\gamma \|u\|_{\dot H^1}^2}$ and the second derivative is $\mathrm{d}^2G(u ;v,v)= 2\gamma \left(\|v\|^2_{\dot H^1}+2\gamma \langle u,v\rangle_{\dot H^1}^2\right)e^{\gamma \|u\|_{\dot H^1}^2}$ so that all hypotheses of Proposition~\ref{4ito} are satisfied. Keeping in mind that we have the cancellation $(u,B(u,u))_{\dot H^1}=0$, the Itô formula now reads: 
\begin{align*}\mathbb{E}\left[e^{\gamma \|u_{\nu}(t)\|_{\dot H^1}^2}\right]&=\mathbb{E}\left[e^{\gamma \|u_0\|_{\dot H^1}^2}\right] \\
    &+2\gamma \nu \mathbb{E}\left[\int_{0}^t e^{\gamma \|u_{\nu}(t')\|_{\dot H^1}^2}\left(\frac{\mathcal{B}_1}{2}-\|u_{\nu}(t')\|_{\dot H^{2+\delta}}^2 + \gamma \sum_{n\in \mathbb{Z}^2} |n|^2|\phi_n|^2 |u_n(t')|^2\right)\,\mathrm{d}t'\right]\,,
\end{align*}
where $u_n(t)=(u(t),e_n)_{L^2}$.
 
\subsection{Local times for martingales}

This appendix gathers some preliminary material used in Section~\ref{4sec6}. We start with the main abstract result on local times for martingales and explain how it applies to our purposes. 

\begin{theorem}[See \cite{KS91} Theorem~7.1 in Chapter~3]\label{4theoLocalTime} Let $y(t)$ be a  \textit{ standard Itô process} of the form \[y(t)=y(0)+\int_0^tx(s)\,\mathrm{d}s + \sum_{n\in\mathbb{Z}} \int_0^t \theta_n(s)\,\mathrm{d}\beta_n(s)\,,\]
where $x(t), \theta_n(t)$ are $\mathcal{F}_t$-adapted processes such that there holds \[\mathbb{E} \left[\int_0^t \left(|x(s)|+\sum_{n \in\mathbb{Z}} |\theta_n(s)|^2\right)\,\mathrm{d}s\right] < \infty \text{ for any } t>0\,.\]
Then there exists a random field that we denote by $\Lambda_t(a,\omega)$, for $t \geqslant 0$, $a \in \mathbb{R}$, $\omega \in \Omega$ such that the following properties hold. 
\begin{enumerate}[label=(\textit{\roman*})]
    \item $(t,a,\omega) \mapsto \Lambda_t(a,\omega)$ is measurable and for any $a \in \mathbb{R}$ the process $t \mapsto \Lambda_t(a, \cdot)$ is $\mathcal{F}_t$-adapted continuous and non-decreasing. For any $t\geqslant 0$, and almost every $\omega \in \Omega$ the function $a \mapsto \Lambda _t (a, \omega)$ is right-continuous. 
    \item For any non-negative Borel function $g : \mathbb{R} \to \mathbb{R}$ and with probability $1$ we have for any $t \geqslant 0$
    \begin{equation}
    \label{4eqlocal1}
    \int_0^tg(y(s)) \left(\sum_{n \in\mathbb{Z}} |\theta_n(s)|^2\right)\,\mathrm{d}s = 2 \int_{\mathbb{R}}\Lambda_t(a,\omega)\,\mathrm{d}a\,.
    \end{equation}
    \item For any convex function $f : \mathbb{R} \to \mathbb{R}$ and with probability $1$ holds \begin{align}
    \label{4eqlocal2}
        f(y(t))&=f(y(0))+ \int_0^t \partial ^{-}f(y(s))x(s)\,\mathrm{d}s + \int_{\mathbb{R}} \Lambda _t (a, \omega) \partial^2f(\mathrm{d}a)\\
        & + \sum_{n \in\mathbb{Z}^2} \int_0^t \partial^{-}f(y(s)) \theta_n(s) \,\mathrm{d}\beta_n (s)\,,\notag
    \end{align}
    and where $\partial ^-f$ stands for the subdifferential of $f$. 
\end{enumerate}
\end{theorem}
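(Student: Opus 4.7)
The plan is to follow the classical route to constructing local times via the Tanaka–Meyer formula. The starting idea is to first establish the identity (\ref{4eqlocal2}) in the special case of the convex function $f_a(y) = (y-a)^+$ (or equivalently $|y-a|$), define $\Lambda_t(a,\omega)$ from that identity, and then bootstrap to the general statements. For the first step, I would approximate $f_a$ by smooth convex functions $f_{a,\varepsilon}\in C^2$ with $f_{a,\varepsilon} \to f_a$ uniformly, $f'_{a,\varepsilon} \to \partial^- f_a = \mathbf{1}_{(a,\infty)}$ pointwise and boundedly, and $f''_{a,\varepsilon}$ an approximation of the Dirac mass $\delta_a$. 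The standard (smooth) Itô formula gives
\[f_{a,\varepsilon}(y(t)) = f_{a,\varepsilon}(y(0)) + \int_0^t f'_{a,\varepsilon}(y(s))\,x(s)\,\mathrm{d}s + \sum_n \int_0^t f'_{a,\varepsilon}(y(s))\theta_n(s)\,\mathrm{d}\beta_n(s) + \tfrac12 \int_0^t f''_{a,\varepsilon}(y(s))\sum_n |\theta_n(s)|^2\,\mathrm{d}s.\]
The first three terms on the right-hand side pass to the limit $\varepsilon\to 0$ by dominated convergence (for the Lebesgue integrals) and by the Itô isometry together with dominated convergence (for the martingale term, whose $L^2$ norms are controlled by the integrability hypothesis on $\theta_n$). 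The left-hand side converges to $(y(t)-a)^+$. Consequently the last term has a limit, which we \emph{define} to be $\Lambda_t(a,\omega)$, so that
\[\Lambda_t(a,\omega) = (y(t)-a)^+ - (y(0)-a)^+ - \int_0^t \mathbf{1}_{(a,\infty)}(y(s))\,x(s)\,\mathrm{d}s - \sum_n \int_0^t \mathbf{1}_{(a,\infty)}(y(s))\theta_n(s)\,\mathrm{d}\beta_n(s).\]

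With this definition, $t\mapsto \Lambda_t(a,\cdot)$ is automatically $\mathcal{F}_t$-adapted and continuous, and its monotonicity in $t$ follows from the fact that $\Lambda_t(a)$ is the limit of non-decreasing processes (since $f''_{a,\varepsilon}\geq 0$). To obtain joint measurability in $(t,a,\omega)$ and right-continuity in $a$, I would combine Kolmogorov's continuity criterion with moment estimates: using the Burkholder–Davis–Gundy inequality on the stochastic-integral representation above, one shows that for $a<b$ and any $p\geq 1$,
\[\mathbb{E}\bigl[|\Lambda_t(a)-\Lambda_t(b)|^p\bigr] \lesssim_p |a-b|^{p/2}\cdot C(t),\]
because $\mathbf{1}_{(a,\infty)} - \mathbf{1}_{(b,\infty)} = \mathbf{1}_{(a,b]}$ has mass $|a-b|$ and the $\sum_n |\theta_n|^2$ integrand is already controlled. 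This yields a continuous modification in $a$.

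Next, the occupation-time formula (\ref{4eqlocal1}) is derived by first taking $g = \mathbf{1}_{(c,d]}$: integrating $a\mapsto \Lambda_t(a)$ on $[c,d]$ reconstructs, via the defining identity, the difference of $(y(t)-a)^+$ at the endpoints, which one rewrites using the smooth approximation as $\frac12 \int_0^t \mathbf{1}_{(c,d]}(y(s))\sum_n |\theta_n(s)|^2\,\mathrm{d}s$. A monotone-class argument then extends (\ref{4eqlocal1}) to arbitrary non-negative Borel $g$. Finally, (\ref{4eqlocal2}) is obtained from the special case $f = (\cdot - a)^+$ by writing a general convex $f$ via the representation
\[f(y) = f(y_0) + \partial^- f(y_0)(y - y_0) + \int_{\mathbb{R}} (y-a)^+\,\partial^2 f(\mathrm{d}a),\]
integrating the known formula against $\partial^2 f(\mathrm{d}a)$, and applying Fubini both to the Lebesgue integrals and, more delicately, to the stochastic integral via the stochastic Fubini theorem. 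The main obstacle I anticipate is precisely the joint continuity/regularity of $(t,a)\mapsto \Lambda_t(a)$ and the justification of stochastic Fubini under the weak assumption $\mathbb{E}\int_0^t \sum_n |\theta_n(s)|^2\,\mathrm{d}s<\infty$; both steps require care with the moment estimates and with selecting a measurable version of $\Lambda$ on the product space.
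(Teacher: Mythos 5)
The paper itself offers no proof of this statement: it is quoted verbatim from Karatzas--Shreve, so the only meaningful comparison is with the classical argument there, and your proposal is indeed a sketch of that argument (smooth approximation of $(y-a)^{+}$, Tanaka's formula taken as the definition of $\Lambda_t(a)$, occupation-time identity for indicators plus a monotone-class step, then integration against $\partial^2 f(\mathrm{d}a)$ for general convex $f$). The architecture is the right one. Two remarks on the statement you are proving: the displayed identity~\eqref{4eqlocal1} is missing the factor $g(a)$ in the right-hand integrand (it should read $2\int_{\mathbb{R}}g(a)\Lambda_t(a,\omega)\,\mathrm{d}a$, which is the form the paper actually uses later and the form your derivation implicitly produces); and your representation $f(y)=f(y_0)+\partial^{-}f(y_0)(y-y_0)+\int_{\mathbb{R}}(y-a)^{+}\,\partial^2f(\mathrm{d}a)$ diverges whenever $\partial^2 f$ has infinite total mass (e.g.\ $f(y)=y^2$); one must split the kernel into $(y-a)^{+}$ for $a\ge y_0$ and $(a-y)^{+}$ for $a<y_0$, or localize to compact intervals.

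The genuine gap is the regularity-in-$a$ step. You claim $\mathbb{E}\bigl[|\Lambda_t(a)-\Lambda_t(b)|^p\bigr]\lesssim_p|a-b|^{p/2}C(t)$ and deduce via Kolmogorov a modification that is \emph{continuous} in $a$. First, the Tanaka representation of $\Lambda_t(a)-\Lambda_t(b)$ contains the drift term $\int_0^t\mathbf{1}_{(a,b]}(y(s))\,x(s)\,\mathrm{d}s$, which is not a stochastic integral and is untouched by Burkholder--Davis--Gundy; under the sole hypothesis $\mathbb{E}\int_0^t|x(s)|\,\mathrm{d}s<\infty$ it admits no bound by a power of $|a-b|$ --- the indicator is evaluated at $y(s)$, so the fact that the interval $(a,b]$ has Lebesgue measure $|a-b|$ controls nothing unless an occupation density is already in hand. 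Second, the conclusion is false in general: for a continuous semimartingale the local time has only an RCLL modification in the space variable, with jumps in $a$ given (up to normalization) by $\int_0^t\mathbf{1}_{\{y(s)=a\}}x(s)\,\mathrm{d}s$, i.e.\ produced precisely by the drift; this is why the theorem asserts only right-continuity in $a$. The correct argument gets right-continuity of the drift term by dominated convergence as $b\downarrow a$ (since $\mathbf{1}_{(a,b]}(y(s))\to0$ pointwise), and handles the martingale part by BDG \emph{combined with} the already-established occupation identity $\int_0^t\mathbf{1}_{(a,b]}(y(s))\sum_n|\theta_n(s)|^2\,\mathrm{d}s=2\int_a^b\Lambda_t(c)\,\mathrm{d}c\le 2|a-b|\sup_c\Lambda_t(c)$ --- note this reverses your order of steps, since your Kolmogorov estimate would otherwise be circular. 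The jointly measurable version is then built from a countable dense set of levels $a$.
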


In order to be applied in our context, we make the following remarks: 
\begin{itemize}
    \item When~\eqref{4eqlocal2} is applied to the convex function $f : x \mapsto (x-a)_+$ we obtain 
        \begin{align*}
           (y(t)-a)_+-(y(0)-a)_+ &=\int_0^t \mathbf{1}_{[a,\infty)}(y(s))x(s)\,\mathrm{d}s + \Lambda_t(a,\omega) \\ 
            &+\sum_{n \in\mathbb{Z}} \int_0^t \mathbf{1}_{[a,\infty)}(y(s))\theta_n(s)\,\mathrm{d}\beta_n(s)\,. 
        \end{align*}
    Then, assuming that both $y(s), \theta_n(s)$ and $x(s)$ are stationary processes we deduce:
        \begin{equation}
        \label{4eqlocal4}
        \mathbb{E}\left[\Lambda_t(a,\omega)\right]=-t\mathbb{E}\left[\mathbf{1}_{[a,\infty)}(y(0))x(0)\right]\,.
        \end{equation}
    \item Let $\Gamma \subset \mathbb{R}$ be a Borelian, apply~\eqref{4eqlocal1} to $g=\mathbf{1}_{\Gamma}$ and take the expectation. It writes:
        \begin{equation}
        \label{4eqlocal3}
        \int_{\Gamma} \mathbb{E}\left[\Lambda_t(a)\right]\,\mathrm{d}a= \frac{t}{2}\mathbb{E}\left[\mathbf{1}_{\Gamma}(y(0))\left(\sum_{n\in\mathbb{Z}}|\theta_n(0)|^2\right)\right]\,.
        \end{equation}
\end{itemize}

Then we can prove the following result. 

\begin{proposition}\label{4propFormula} Let $\mu_{\nu}$ be a stationary measure for~\eqref{4eqNSv2} constructed in Section~\ref{4sec4}. For any borel set $\Gamma \subset\mathbb{R}_+$ and any function $g \in \mathcal{C}^2(\mathbb{R})$ whose second derivative has at most polynomial growth at infinity we have
\begin{align*}
    \label{4eqMainTool}
    \mathbb{E}_{\mu_{\nu}}&\left[ \int_{\Gamma} \mathbf{1}_{(a,\infty)}(g(\|u\|^2_{L^2})) \left(g'(\|u\|^2_{L^2}) \left(\frac{\mathcal{B}_0}{2}-\|\nabla ^{1+\delta} u\|_{L^2}^2\right)+g''(\|u\|^2_{L^2}) \sum_{n \in \mathbb{Z}^2} |\phi_n|^2|u_n|^2\right)\,\mathrm{d}a\right] \\
    & + \sum_{n \in \mathbb{Z}^2} |\phi_n|^2 \mathbb{E}_{\mu_{\nu}}\left[\mathbf{1}_{\Gamma}(g(\|u\|^2_{L^2}))(g'(\|u\|^2_{L^2})|u_n|)^2\right]=0\,,
\end{align*}
where $u_n=(u,e_n)_{L^2}$. 
\end{proposition}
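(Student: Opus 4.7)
The plan is to apply Itô's formula twice in order to turn the real-valued process $y(t) := g(\|u_\nu(t)\|_{L^2}^2)$ into a standard Itô process, and then invoke the local time machinery of Theorem~\ref{4theoLocalTime} together with the stationarity of $u_\nu$ under $\mu_\nu$.

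I would first apply Proposition~\ref{4ito} to the functional $F(u) = \|u\|_{L^2}^2$ on the Gelfand triple $(H^{-(1+\delta)}, L^2, H^{1+\delta})$. Using the bilinear cancellation $\langle u, B(u,u)\rangle_{L^2} = 0$ and $\langle u, Lu\rangle_{L^2} = \|u\|_{\dot H^{1+\delta}}^2$, this yields the semimartingale decomposition
\begin{equation*}
\mathrm{d}\|u(t)\|_{L^2}^2 = \nu\bigl(\mathcal{B}_0 - 2\|u(t)\|_{\dot H^{1+\delta}}^2\bigr)\,\mathrm{d}t + 2\sqrt{\nu}\sum_{n\in\mathbb{Z}_0^2}\phi_n\, u_n(t)\,\mathrm{d}\beta_n(t).
\end{equation*}
Composing with $g$ and applying the standard one-dimensional Itô formula then produces a semimartingale decomposition $\mathrm{d}y(t) = x(t)\,\mathrm{d}t + \sum_n \theta_n(t)\,\mathrm{d}\beta_n(t)$, where
\begin{align*}
x(t) &= \nu g'(\|u\|_{L^2}^2)\bigl(\mathcal{B}_0 - 2\|u\|_{\dot H^{1+\delta}}^2\bigr) + 2\nu g''(\|u\|_{L^2}^2)\sum_n |\phi_n|^2|u_n|^2,\\
\theta_n(t) &= 2\sqrt{\nu}\,g'(\|u\|_{L^2}^2)\,\phi_n u_n(t).
\end{align*}

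Next, I would apply Theorem~\ref{4theoLocalTime} to this $y$. Stationarity of $u$ under $\mu_\nu$ transfers to $y, x, \theta_n$, so that specializing identity~\eqref{4eqlocal2} to the convex function $a \mapsto (y-a)_+$ and then taking expectations yields~\eqref{4eqlocal4}, while integrating identity~\eqref{4eqlocal1} over $\Gamma$ yields~\eqref{4eqlocal3}. Equating the two resulting expressions for $\int_\Gamma \mathbb{E}[\Lambda_t(a)]\,\mathrm{d}a$ and cancelling the common factor $t$ gives
\begin{equation*}
-\int_\Gamma \mathbb{E}\bigl[\mathbf{1}_{(a,\infty)}(y(0))\, x(0)\bigr]\,\mathrm{d}a = \tfrac{1}{2}\mathbb{E}\Bigl[\mathbf{1}_\Gamma(y(0))\sum_n|\theta_n(0)|^2\Bigr].
\end{equation*}
Substituting the explicit expressions for $x$ and $\theta_n$ above, noting that $\sum_n |\theta_n|^2 = 4\nu (g'(\|u\|_{L^2}^2))^2 \sum_n |\phi_n|^2|u_n|^2$, dividing through by $2\nu$ and rearranging, yields exactly the stated identity.

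The main obstacle is technical rather than conceptual: verifying the integrability hypothesis $\mathbb{E}\bigl[\int_0^t(|x(s)| + \sum |\theta_n(s)|^2)\,\mathrm{d}s\bigr] < \infty$ required by Theorem~\ref{4theoLocalTime} when $g''$ is only assumed to have polynomial growth. This follows by combining the bounds $\mathbb{E}_{\mu_\nu}[\|u\|_{\dot H^{2+\delta}}^2] = \mathcal{B}_1/2$ and $\mathbb{E}_{\mu_\nu}[e^{\gamma\|u\|_{\dot H^1}^2}] < \infty$ from Proposition~\ref{4propUnifBounds} (which give moments of every polynomial order for $\|u\|_{L^2}$) with the polynomial control on $g'$ and $g''$ obtained by integrating the growth assumption on $g''$. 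A secondary subtlety is checking the continuity hypothesis of Proposition~\ref{4ito} for $F(u) = \|u\|_{L^2}^2$ on the triple above, which is immediate since $\mathrm{d}F(u;v) = 2\langle u,v\rangle$ is jointly continuous in the required topologies.
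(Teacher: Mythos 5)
Your proposal is correct and follows essentially the same route as the paper: a semimartingale decomposition of $g(\|u_\nu(t)\|_{L^2}^2)$ via the Itô formula (the paper applies the infinite-dimensional formula of Proposition~\ref{4ito} directly to $F(u)=g(\|u\|_{L^2}^2)$ rather than composing in two steps, but the drift and diffusion coefficients you obtain agree with the identity to be proved), followed by equating~\eqref{4eqlocal3} and~\eqref{4eqlocal4} using stationarity. Your explicit verification of the integrability hypothesis of Theorem~\ref{4theoLocalTime} is a point the paper leaves implicit, and is a welcome addition.
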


\begin{proof} The proof of this proposition comes from the previous identities. Indeed, let the functional $F : H^1 \to \mathbb{R}$ defined by $F(u)\coloneqq g(\|u\|_{L^2}^2)$ so that \[F'(u;v)=2g'(\|u\|_{L^2}^2) (u,v)_{L^2} \text{ and } F''(u; v,v)= 2g'(\|u\|_{L^2}^2) \|v\|^2_{L^2} + 4g''(\|u\|^2_{L^2}) (u,v)^2_{L^2}\,.\] 
With such a function $g$ and the process $f(t)\coloneqq g(\|u(t)\|_{L^2}^2)$, the Itô formula now reads 
\begin{equation}
    \label{4eqfStation}
    f(t)=f(0)+\nu\int_0^t A(s)\,\mathrm{d}s + 2\sqrt{\nu}\sum_{n\in\mathbb{Z}^2} \phi_n \int_0^tg'(\|u\|_{L^2}^2)u_n\,\mathrm{d}\beta_n(s)\,,
\end{equation}
where after integration by parts,
$A(t):=g'(\|u\|^2_{L^2}) \left(\mathcal{B}_0-2\|u\|_{\dot H^{1+\delta}}^2\right)+4g''(\|u\|^2_{L^2}) \sum_{n \in \mathbb{Z}^2} |\phi_n|^2u_n^2$.

Remark that for each $n\in\mathbb{Z}^2$, the process $t \mapsto g'(\|u(t)\|_{L^2})(u(t),e_n)_{L^2}$ is stationnary. Indeed, $u(t)$ is an $H^1$ stationary process and the map $G : u \mapsto g'(\|u\|_{L^2}^2)(u,e_n)_{L^2}$ is continuous from $H^1$ to $\mathbb{C}$, thus Borelian. Then by Theorem~\ref{4theoLocalTime}, and more previsely, equation~\eqref{4eqlocal3} we have
\[\int_{\Gamma} \mathbb{E}[\Lambda _t (a)]\,\mathrm{d}a=2\nu t \sum_{n\in\mathbb{Z}^2} |\phi_n|^2 \mathbb{E}\left[\mathbf{1}_{\Gamma}(f)(g'(\|u\|_{L^2}^2)|u_n|)^2\right]\,,\]
and~\eqref{4eqlocal4} yields $\mathbb{E}[\Lambda _t (a)]=-\nu t \mathbb{E}[\mathbf{1}_{(a,\infty)}(f(0))A(0)]$
which concludes the proof. 
\end{proof}

\bibliographystyle{alpha}
\bibliography{biblio}

\begin{thebibliography}{BCZGH16}

\bibitem[AC90]{albeverioCruzeiro}
Sergio Albeverio and Ana~Bela Cruzeiro.
\newblock {Global flows with invariant (Gibbs) measures for Euler and
  Navier-Stokes two-dimensional fluids}.
\newblock {\em Comm. Math. Phys.}, 129(3):431--444, 1990.

\bibitem[BC94]{bahouriChemin}
H.~Bahouri and J.-Y. Chemin.
\newblock Equations de transport relatives a des champs de vecteurs
  non-lipschitziens et mecanique des fluides. (french) [transport equations for
  non-lipschitz vectorfields and fluid mechanics].
\newblock {\em Arch. Rational Mech. Anal.}, 127:159--2181, 1994.

\bibitem[BCZGH16]{bedrossianCotiZelati}
Jacob Bedrossian, Michele Coti~Zelati, and Nathan Glatt-Holtz.
\newblock Invariant measures for passive scalars in the small noise inviscid
  limit.
\newblock {\em Communications in Mathematical Physics}, 348:101--127, 2016.

\bibitem[BKM84]{bkm}
J.T. Beale, T.~Kato, and A.~Majda.
\newblock {Remarks on the breakdown of smooth solutions for the 3-D Euler
  equations}.
\newblock {\em Comm. Math. Phys.}, 92(1):61--66, 1984.

\bibitem[Bou94]{bourgain}
Jean Bourgain.
\newblock {Periodic nonlinear Schrödinger equation and invariant measures}.
\newblock {\em Comm. Math. Phys.}, 166:1--26, 1994.

\bibitem[Bou96]{bourgain2d}
Jean Bourgain.
\newblock {Invariant measures for the 2D-defocusing nonlinear Schrödinger
  equation}.
\newblock {\em Comm. Math. Phys.}, 176:421--445, 1996.

\bibitem[BT08]{burqTzvetkov2}
Nicolas Burq and Nikolay Tzvetkov.
\newblock Random data cauchy theory for supercritical wave equations {I}: Local
  theory.
\newblock {\em Inventiones Mathematicae}, 173:449--475, 2008.

\bibitem[BT14]{burqTzvetkov}
Nicolas Burq and Nikolay Tzvetkov.
\newblock Probabilistic well-posedness for the cubic wave equation.
\newblock {\em J. Eur. Math. Soc.}, 16(1):1--30, 2014.

\bibitem[DPZ92]{DaPratoZabysck}
Giuseppe Da~Prato and Jerzy Zabczyk.
\newblock {\em Stochastic Equations in Infinite Dimensions}.
\newblock Cambridge University Press, 1992.

\bibitem[Dud02]{dudley}
R.M. Dudley.
\newblock {\em Real Analysis and Probability}.
\newblock Cambridge Studies in Advanced Mathematics. Cambridge University
  Press, 2002.

\bibitem[Fla18]{flandoli}
Franco Flandoli.
\newblock {Weak vorticity formulation of 2D Euler equations with white noise
  initialcondition}.
\newblock {\em Comm. Partial Differential Equations}, 43:1102--1149, 2018.

\bibitem[FS20]{flodesSy}
Juraj Flödes and Mouhamadou Sy.
\newblock {Invariant measures and global well posedness for SQG equation}.
\newblock {\em arXiv:2002.09555}, 2020.

\bibitem[GH{\u{S}}V15]{sverakVicol}
Nathan Glatt-Holtz, Vladimir {\u{S}}ver\'ak, and Vlad Vicol.
\newblock {On Inviscid Limits for the Stochastic Navier–Stokes Equations and
  Related Models}.
\newblock {\em Arch Rational Mech Anal}, 217:619--649, 2015.

\bibitem[KS91]{KS91}
Ioannis Karatzas and Steven~E. Shreve.
\newblock {\em Brownian Motion and Stochastic Calculus}.
\newblock Number 113 in Graduate Texts in Mathematics. Springer-Verlag, 1991.

\bibitem[KS04]{shirikyanKuksin}
Sergei Kuksin and Armen Shirikyan.
\newblock {Randomly forced CGL equation: stationary measures and the inviscid
  limit}.
\newblock {\em Journal of Physics}, 37:3805--3822, 2004.

\bibitem[K{\u{S}}14]{kiselevSverak}
Alexander Kiselev and Vladimir {\u{S}}ver\'ak.
\newblock {Small scale creation for solutions of the incompressible
  two-dimensional Euler equation}.
\newblock {\em Annals of Mathematics (2)}, 180:1205--1220, 2014.

\bibitem[KS15]{kuksin}
Sergei~B. Kuksin and Armen Shirikyan.
\newblock {\em Mathematics of Two-Dimensional Turbulence}.
\newblock Number 194 in Cambridge Tracts in Mathematics. Cambridge University
  Press, 2015.

\bibitem[Kuk04]{Kuksin2004}
Sergei~B. Kuksin.
\newblock {The Eulerian Limit for 2D Statistical Hydrodynamics}.
\newblock {\em Journal of Statistical Physics}, 115(1):469--492, 2004.

\bibitem[MP14]{mattinglyPardoux}
Jonathan~C. Mattingly and Etienne Pardoux.
\newblock Invariant measure selection by noise. an example.
\newblock {\em Discrete \& Continuous Dynamical Systems - A},
  34(10):4223--4257, 2014.

\bibitem[Shi02]{shirikyan}
Armen Shirikyan.
\newblock {Analyticity of solutions for randomly forced twodimensional
  Navier-Stokes equations}.
\newblock {\em Russ. Math. Surv.}, 57(4):785--799, 2002.

\bibitem[Shi11]{shirikyan11}
Armen Shirikyan.
\newblock {Local times for solutions of the complex Ginzburg-Landau equation
  and the inviscid limit}.
\newblock {\em J. Math. Anal. Appl.}, 384(1):130--137, 2011.

\bibitem[Sim87]{simon}
Jacques Simon.
\newblock Compact sets in the space {$L^p(O,T;B)$}.
\newblock {\em Annali di Matematica pura ed applicata}, 146:65--96, 1987.

\bibitem[Sy18]{sy2}
Mouhamadou Sy.
\newblock {Invariant measure and long time behavior of regular solutions of the
  Benjamin-Ono equation}.
\newblock {\em Analysis and PDE}, 2018.

\bibitem[Sy19]{sy3}
Mouhamadou Sy.
\newblock {Invariant measure and large time dynamics of the cubic Klein-Gordon
  equation in 3D}.
\newblock {\em Stochastics and Partial Differential Equations: Analysis and
  Computations}, 2019.

\bibitem[SY20a]{syYu1}
Mouhamadou Sy and Xueying Yu.
\newblock {Almost sure global well-posedness for the energy supercritical NLS
  on the unit ball of $\mathbb{R}^3$}.
\newblock {\em \href{https://arxiv.org/abs/2007.00766}{arXiv:2007.00766}},
  2020.

\bibitem[SY20b]{syYu2}
Mouhamadou Sy and Xueying Yu.
\newblock {Global well-posedness and long-time behavior of the fractional NLS}.
\newblock {\em \href{https://arxiv.org/abs/2011.13904}{arXiv:2011.13904}},
  2020.

\bibitem[Sy21]{sy}
Mouhamadou Sy.
\newblock {Almost sure global well-posedness for the energy supercritical
  Schrödinger equations}.
\newblock {\em Journal de Mathématiques Pures et Appliquées}, 154:108--145,
  2021.

\bibitem[Wol33]{wolibner}
W.~Wolibner.
\newblock Un théorème sur l’existence du mouvement plan d’un fluide
  parfait, homogène, incompressible, pendant un temps infiniment long.
\newblock {\em Math. Z.}, 37:698--726, 1933.

\bibitem[Zla15]{zlatos}
Andrej Zlato\u{s}.
\newblock {Exponential growth of the vorticity gradient for the Euler equation
  on the torus}.
\newblock {\em Advances in Mathematics}, 268:396--403, 2015.

\end{thebibliography}
\end{document}